%!TEX program = xelatex
%\documentclass[12pt]{amsart}
\documentclass[11pt,letterpaper]{amsart}
\usepackage{mathrsfs,color,bm,amsmath,amsfonts,amssymb,dsfont,amscd,extarrows,enumerate,verbatim}
\usepackage[all]{xy}
\numberwithin{equation}{section}

\newtheorem{thm}{Theorem}[section]
\newtheorem{cor}[thm]{Corollary}
\newtheorem{lem}[thm]{Lemma}
\newtheorem{remark}[thm]{Remark}

\newtheorem{defn}[thm]{Definition}

\newcommand{\mr}{\mathbb{R}}
\newcommand{\mb}{\mathcal{B}}
\newcommand{\mc}{\mathbb{C}}
\newcommand{\mo}{\mathcal{O}}
\newcommand{\rw}{\rightarrow}
\newcommand{\rwo}{\mapsto}

\newcommand{\nve}{\vec{\bm n}}
\newcommand{\distr}{\operatorname{distr.}}
\newcommand{\vol}{\operatorname{Vol}}
\newcommand{\sgn}{\operatorname{sgn}}
\newcommand{\ave}{\operatorname{ave}}
\newcommand{\h}{\operatorname{H}}
\newcommand{\sh}{\operatorname{SH}}
\newcommand{\shp}{\operatorname{SHP}}

\DeclareMathOperator{\loc}{loc}

\DeclareMathOperator{\lip}{Lip}
\DeclareMathOperator{\supp}{supp}
\DeclareMathOperator{\diag}{diag}
\DeclareMathOperator{\diam}{diam}
\DeclareMathOperator{\dom}{Dom}
\usepackage[pagebackref=true,colorlinks=true,bookmarksopen,bookmarksnumbered,citecolor=red, linkcolor=blue, urlcolor=cyan]{hyperref}

\allowdisplaybreaks[4]
\begin{document}

\title[Uniform estimates of Green functions and Sobolev inequalities]
{Uniform estimates of Green functions and Sobolev-type inequalities on real and complex manifolds}

\author[F. Deng]{Fusheng Deng}
\address{Fusheng Deng: \ School of Mathematical Sciences, University of Chinese Academy of Sciences \\ Beijing 100049, P. R. China}
\email{fshdeng@ucas.ac.cn}

\author[G. Huang]{Gang Huang }
\address{Gang Huang: \ School of Mathematical Sciences, University of Chinese Academy of Sciences \\ Beijing 100049, P. R. China}
\email{huanggang21@mails.ucas.ac.cn}

\author[X. Qin]{Xiangsen Qin}
\address{Xiangsen Qin: \ School of Mathematical Sciences, University of Chinese Academy of Sciences \\ Beijing 100049, P. R. China}
\email{qinxiangsen19@mails.ucas.ac.cn}

\begin{abstract}
     We prove certain  $L^p$ Sobolev-type and Poincar\'e-type inequalities for functions on real and complex manifolds 
    for the gradient operator $\nabla$, the Laplace operator $\Delta$, and the operator $\bar\partial$. 
    Integral representations for functions are key to get such inequalities. 
   The proofs of the main results involves certain uniform estimates for the Green functions and their gradients on Riemannian manifolds,
   which are also established in the present work. 
\end{abstract}
%\thanks{(*) The second author and the third author are both corresponding authors.}

\maketitle
\tableofcontents
    \section{Introduction}
    This article is a continuation and extension of \cite{DJQ}. 
    That paper considers the $\bar\partial$-Poincar\'e inequalities
    in an open subset of $\mc^n$. % and get an improved $L^2$-estimate of $\bar\partial$ on a strictly pseudoconvex domains. 
    In the present work, we consider some  $L^p$ Sobolev-type and Poincar\'e-type inequalities on manifolds 
    for the gradient operator $\nabla$, Laplace operator $\Delta$ or operator $\bar\partial$. 
    As observed in \cite{DJQ}, we can use  integral representations for functions as the key tool to get such inequalities. 
    In order to make such an idea work,  we need certain uniform estimates for Green functions of the Laplace operator and their gradients on Riemannian manifolds. 
    This is established in the following 
    \begin{thm}\label{green function uniform}
     Let $(\overline M=M\cup\partial M,g)$ be a compact Riemannian  manifold with boundary (of dimension $n\geq 2$), and let $G$ be the Green function for the Laplace operator on $M$.
     Then $G$ has the following uniform estimate:
     \begin{itemize}
     \item[(i)] If $n\geq 3$, there is a constant $C:=C(\overline M,g)>0$ such that 
     $$ |G(x,y)|\leq Cd(x,y)^{2-n},$$ $$ |\nabla_y G(x,y)|\leq Cd(x,y)^{1-n},$$
     for all $(x,y)\in (\overline M\times\overline M)\setminus\diag$.
     \item[(ii)] If $n=2$, there is a constant $C:=C(\overline M,g)>0$ such that 
     $$ |G(x,y)|\leq C(1+|\ln d(x,y)|),$$
     $$|\nabla_y G(x,y)|\leq C(1+|\ln d(x,y)|)\frac{1}{d(x,y)},$$
     for all $(x,y)\in (\overline M\times\overline M)\setminus\diag$.
     \end{itemize}
     \end{thm}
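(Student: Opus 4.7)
The plan is to use the classical parametrix method. Writing $G=\Gamma+H$, where $\Gamma$ is an explicit local model carrying the correct singularity and $H$ is a smoother remainder controlled by elliptic theory for a Dirichlet problem, the estimates reduce to direct computation for $\Gamma$ plus boundary value estimates for $H$. Uniformity of the constants comes from compactness of $\overline M$.

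\medskip\noindent\emph{Reduction and parametrix.} Fix $\delta>0$ smaller than the injectivity radius of a slight smooth extension of $\overline M$ past its boundary. The function $G$ is jointly continuous on $(\overline M\times\overline M)\setminus\diag$ (vanishing on $(\partial M\times\overline M)\cup(\overline M\times\partial M)$ by the Dirichlet condition) and smooth off the diagonal, so $|G|$ and $|\nabla_y G|$ are bounded on the compact set $\{d(x,y)\geq\delta\}$; there the claimed inequalities are automatic, with the singular factor absorbed into the constant. Inside $\{d(x,y)<\delta\}$ pick $\chi\in C_c^\infty([0,\delta))$ with $\chi\equiv 1$ on $[0,\delta/2]$, and in geodesic normal coordinates centred at $x$ set
$$\Gamma(x,y):=\chi(d(x,y))\,\gamma_n(d(x,y)),\qquad \gamma_n(r)=\begin{cases}\dfrac{1}{(n-2)\omega_{n-1}}\,r^{2-n},& n\geq 3,\\[4pt] -\dfrac{1}{2\pi}\ln r,& n=2.\end{cases}$$
Using the normal-coordinate expansion $\sqrt{\det g}=1+O(r^2)$ along the radial direction, a direct computation yields
$$\Delta_y\Gamma(x,\cdot)=-\delta_x+E(x,\cdot),$$
with $|E(x,y)|\leq C\,d(x,y)^{2-n}$ when $n\geq 3$ and $|E(x,y)|\leq C$ when $n=2$, plus smooth terms coming from derivatives of $\chi$ supported in $\{\delta/2\leq d\leq\delta\}$.

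\medskip\noindent\emph{Remainder estimate.} The difference $H:=G-\Gamma$ solves the Dirichlet problem
$$\Delta_y H(x,\cdot)=-E(x,\cdot)\text{ in }M,\qquad H(x,\cdot)\big|_{\partial M}=-\Gamma(x,\cdot)\big|_{\partial M}.$$
Since $E(x,\cdot)\in L^p(M)$ with norm uniform in $x$ (for any $p<n/(n-2)$ when $n\geq 3$, and any $p<\infty$ when $n=2$), classical $L^p$ theory for the Poisson equation controls the volume-potential part of $H$. The boundary part, namely the harmonic extension of $-\Gamma(x,\cdot)|_{\partial M}$, vanishes identically for $d(x,\partial M)\geq\delta$ and is handled by maximum-principle/barrier arguments when $x$ approaches $\partial M$. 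Together with $|\Gamma(x,y)|\leq Cd(x,y)^{2-n}$ this yields the pointwise bound on $G$. For the gradient, $|\nabla_y\Gamma(x,y)|\leq Cd(x,y)^{1-n}$ (respectively $C/d$ in dimension $2$) is immediate by differentiation; $|\nabla_y H(x,y)|$ is bounded by interior Schauder estimates on $B(y,d(x,y)/2)$, on which $H(x,\cdot)$ solves an elliptic equation with $L^p$ right-hand side and controlled $L^\infty$ norm, together with boundary Schauder estimates when $y$ is near $\partial M$. A standard rescaling converts these into the stated pointwise gradient bound.

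\medskip\noindent\emph{Principal obstacle.} The delicate point is keeping $C$ uniform when both $x$ and $y$ approach $\partial M$: the Dirichlet data $-\Gamma(x,\cdot)|_{\partial M}$ for $H$ can grow like $d(x,\partial M)^{2-n}$, so a naive maximum-principle bound on $H$ degenerates, even though the required bound is on the combination $\Gamma+H=G$ (which vanishes on $\partial M$). The resolution I would adopt is to augment the parametrix by a boundary-correction term produced by reflection across $\partial M$ in tubular coordinates, so that the modified parametrix has uniformly bounded boundary trace; equivalently, one may embed $\overline M$ into a closed Riemannian manifold $\tilde M$, use the (uniformly estimated) Green function there as a first approximation, and correct by the harmonic extension of its smooth restriction to $\partial M$. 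The dimension $n=2$ case runs in parallel with $r^{2-n}$ replaced by $-\ln r$; the extra $(1+|\ln d|)$ factor in the gradient bound is produced by elliptic estimates for $H$, where the logarithmic growth of $\Gamma$ near the diagonal propagates through the Schauder/rescaling step.
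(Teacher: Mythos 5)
Your parametrix decomposition is a natural starting point, and you are right to flag uniformity as $x\to\partial M$ as the crux. But two of your steps fail as written, and the fix you sketch, while in the spirit of the paper's actual proof, omits the mechanism that makes it work.

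\emph{The pointwise bound.} The paper also embeds $\overline M$ into a closed manifold $M_0$, modifies its Green function $G_0$ (which by Aubin satisfies $-C d^{2-n}\leq G_0\leq C$, a \emph{one-sided} upper bound away from the pole) into a local fundamental solution $\Gamma$, and writes $G=\Gamma-\Phi$ where $\Phi(x,\cdot)$ is the harmonic extension of $\Gamma(x,\cdot)|_{\partial M}$. The crucial observation is that only the \emph{upper} bound on $\Phi$ is needed: since $\Gamma|_{\partial M}\leq C$, the maximum principle gives $\Phi\leq C$ uniformly, and then $G=\Gamma-\Phi\geq -Cd^{2-n}-C$ while $G\leq 0$ holds automatically for a Dirichlet Green function. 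Your remainder $H=G-\Gamma=-\Phi$ is \emph{not} uniformly bounded in $L^\infty$ (only bounded below); its lower bound on $\partial M$ degenerates like $-\delta(x)^{2-n}$ as $x\to\partial M$, so the ``controlled $L^\infty$ norm'' you invoke later is false. Your proposed reflection/embedding correction can be made to work, but only by exploiting this sign structure, which your write-up does not do. Separately, your claim that $L^p$ theory for $\Delta H_1=-E$ with $E\in L^p$ for $p<n/(n-2)$ controls the volume potential uniformly is insufficient for $n\geq 4$, since then $n/(n-2)\leq n/2$ and the $W^{2,p}\hookrightarrow L^\infty$ step is unavailable without a parametrix iteration that you do not carry out.

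\emph{The gradient bound.} This is where the paper's real work lies, and it is absent from your sketch. Because $H$ is not uniformly bounded, interior or boundary Schauder estimates on $B(y,d(x,y)/2)$ do not yield the stated gradient bound. Instead, the paper first proves the sharper pointwise estimate $|G(x,y)|\leq C\,d(x,y)^{1-n}\delta(x)$ (Claim $\clubsuit$) by a barrier argument: one compares $G$ with a harmonic function $u$ on the annulus $B(x^*,2r)\setminus\overline{B(x^*,r)}$ attached to an exterior tangent sphere at the nearest boundary point of $x$, taking boundary values $0$ and $1$. The uniform gradient bound $\sup|\nabla u|\leq C/r$ for that annulus barrier is itself nontrivial and is proved in Lemma 4.1 via a freezing-coefficients argument that relies on the Euclidean Green function derivative estimates of Section 3. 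Only then does the paper apply the Cheng--Yau gradient estimate to $G(x,\cdot)$, splitting into cases $\delta(y)\leq d(x,y)$ and $\delta(y)>d(x,y)$. Your ``standard rescaling'' step does not reproduce the $\delta(x)$ factor from Claim $\clubsuit$, and without it the $d(x,y)^{1-n}$ gradient bound cannot be obtained near the boundary.
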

     \indent Please see Subsection \ref{notations} for various notations and conventions that will be used in this article. 
     When we say $(M,g)$ (resp. $(\overline M,g)$) is a Riemannian manifold, we mean it is a connected oriented smooth manifold without boundary (resp. with smooth boundary) 
     and it is equipped with a smooth Riemannian metric (resp. a Riemannian metric which is smooth up to the boundary). 
     We make such smoothness convention just for convenience, and many results of this paper also hold under weaker regularity assumption, 
     but we don't pursue this here.  Moreover, we assume all manifolds are of real dimension $n\geq 2$. 
     
     Some weaker forms of Theorem \ref{green function uniform} are already known. 
     It is well known for bounded domains in $\mr^n$ with $n\geq 3$, see \cite[Theorem 3.3]{Widman} for example..
  Similar result is stated in  \cite[Theorem 4.17]{Aubin}, but with the constant $C$ depending on the distance from $x$ to the boundary. 
     It seems that the above uniform estimates cannot be simply obtained through a minor modification of the proof of \cite[Theorem 4.17]{Aubin}. 

Following the basic insight in \cite{DJQ}, we can derive from Theorem \ref{green function uniform} certain $L^p$ Sobolev-type inequalities for the gradient operator and the Laplace operator.

    \begin{thm}\label{thm:graident boundary}
    Let $(\overline M,g)$ be a compact Riemannian manifold with boundary, and let $p,q,r$ satisfy
    $$
         1\leq p<\infty,\ 1\leq q<\infty,\ 1\leq r<\infty,\ q(n-p)<np,\ q(n-1)<nr.
    $$
    Then there is a constant $\delta:=\delta(\overline M,g,p,q,r)>0$ such that
    $$
    \delta\|f\|_{L^q(M)}\leq \|\nabla f\|_{L^p(M)}+\|f\|_{L^r(\partial M)},\ \forall f\in C^1(\overline M).$$
    \end{thm}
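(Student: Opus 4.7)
The plan is to obtain a pointwise bound on $f$ via the Dirichlet Green function representation, then apply Riesz-type convolution inequalities to each term. For $f\in C^1(\overline M)$ and $x\in M$, applying Green's first identity to $f$ and $G(x,\cdot)$ on $M\setminus B(x,\varepsilon)$ (where $\Delta_y G=0$) and letting $\varepsilon\to 0$ gives, since $G|_{\partial M}=0$,
$$f(x)=-\int_M\nabla_y G(x,y)\cdot\nabla f(y)\,dV(y)+\int_{\partial M}\frac{\partial G(x,y)}{\partial\nu_y}\,f(y)\,dS(y),$$
where $\nu$ is the outward unit normal. Substituting the bound $|\nabla_y G(x,y)|\leq Cd(x,y)^{1-n}$ from Theorem~\ref{green function uniform} (with an extra $(1+|\ln d(x,y)|)$ factor when $n=2$) yields
$$|f(x)|\leq C\int_M d(x,y)^{1-n}|\nabla f(y)|\,dV(y)+C\int_{\partial M}d(x,y)^{1-n}|f(y)|\,dS(y).$$

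The interior term is a Riesz potential of order $1$ on $(\overline M,g)$: its kernel lies uniformly in the weak Lebesgue space $L^{n/(n-1),\infty}$, so the weak-type Young (or Hardy--Littlewood--Sobolev) inequality gives boundedness $L^p(M)\to L^{q_\ast}(M)$ at the critical exponent $\tfrac1{q_\ast}=\tfrac1p-\tfrac1n$. Since the hypothesis $q(n-p)<np$ is exactly $\tfrac1q>\tfrac1p-\tfrac1n$, one more application of H\"older with $\vol(M)<\infty$ produces the sub-critical $L^q$ bound $\leq C\|\nabla f\|_{L^p(M)}$.

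The crux is the boundary estimate
$$\Bigl\|\int_{\partial M}d(\cdot,y)^{1-n}h(y)\,dS(y)\Bigr\|_{L^q(M)}\leq C\|h\|_{L^r(\partial M)}\quad\text{whenever }q(n-1)<nr.$$
Using Fermi coordinates in a tubular neighborhood $\partial M\times[0,\varepsilon_0)$ together with a partition of unity, the kernel is comparable to $(|x'-y'|^2+t^2)^{(1-n)/2}$ with $t=d(\cdot,\partial M)$. If $q>r$, Young's convolution inequality on each slice $\{t=\mathrm{const}\}\subset\partial M$ gives an $L^q(\partial M;dx')$ bound with operator norm $\sim t^{-(n-1)/s'}$, where $\tfrac1{s'}=\tfrac1r-\tfrac1q$; raising to the $q$-th power and integrating in $t\in(0,\varepsilon_0)$ converges precisely when $q(n-1)/s'<1$, which rearranges to $q(n-1)<nr$. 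If $q\leq r$, one first uses $\|h\|_{L^q(\partial M)}\leq C\|h\|_{L^r(\partial M)}$ (finite boundary volume) and then Young's at $s=1$, for which $\|K_t\|_{L^1(\partial M)}\lesssim 1+|\log t|$; the integrability $\int_0^{\varepsilon_0}(1+|\log t|)^q\,dt<\infty$ closes the estimate, and the condition $q(n-1)<nr$ is automatic since $q(n-1)<qn\leq rn$. Outside the tubular neighborhood the kernel is uniformly bounded and the corresponding bound is trivial.

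The main obstacle is making this boundary estimate rigorous across both regimes $q>r$ and $q\leq r$, handling the Fermi-coordinate reduction with uniform control on metric distortion via compactness and a partition of unity, and absorbing the $n=2$ logarithmic loss from Theorem~\ref{green function uniform}, which is mitigated by the strictness of the inequalities in the hypotheses.
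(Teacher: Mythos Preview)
Your overall architecture matches the paper: derive the pointwise representation
\[
f(x)=-\int_M\langle d_yG(x,y),df(y)\rangle\,dV(y)+\int_{\partial M}\frac{\partial G(x,y)}{\partial\nu_y}f(y)\,dS(y),
\]
pass to the pointwise bound via $|\nabla_yG|\le Cd(x,y)^{1-n}$ from Theorem~\ref{green function uniform}, and then control the two resulting Riesz-type operators in $L^q$. Your plan for each of those operator bounds is sound.

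Where you diverge from the paper is in \emph{how} you prove the two operator bounds. The paper does both in one stroke by the classical ``split the kernel and apply H\"older for three functions'' trick (Lemma~\ref{lem:basic lemma 2}): one writes $|f|^{a}|\nabla_yG|^{b}\cdot|f|^{1-a}\cdot|\nabla_yG|^{1-b}$ with suitably chosen $a,b$, applies H\"older with exponents $(q,q_0,r_0)$, and then invokes the uniform integrability bounds $\int_M|\nabla_yG|^{\alpha}\,dV\le C$ and $\int_{\partial M}|\nabla_yG|^{\beta}\,dS\le C$ of Lemma~\ref{lem:gradient 1}. This handles the interior and boundary terms by the same mechanism and avoids both Hardy--Littlewood--Sobolev and Fermi coordinates entirely; the $n=2$ logarithm and the endpoint $p=1$ are absorbed directly into those scalar integrability bounds (the latter by letting $p\to1^+$ and checking the constant stays bounded). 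By contrast, you invoke HLS as a black box for the interior term and run a slicing argument in a boundary collar for the boundary term. Both routes are correct; the paper's is more self-contained and uniform across the two terms, while yours makes the geometric picture of the boundary operator (a Poisson-type kernel with $t$-decay) more explicit. One small point to tighten in your version: at $p=1$ HLS only gives weak-$L^{n/(n-1)}$, so you need the finite-measure embedding $L^{s,\infty}\hookrightarrow L^q$ for $q<s$ explicitly before your sub-critical H\"older step.
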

    \begin{thm}\label{thm:laplace boundary}
    Suppose $(\overline M,g)$ is a compact Riemannian manifold  with boundary. Let $p,q,r$ satisfy
    $$
         1\leq p<\infty,\ 1\leq q<\infty,\ 1\leq r<\infty,\ q(n-2p)<np,\ q(n-1)<nr,
    $$
    and if $n=2$, we moreover require $p>1$ if $q>1$. Then there is a constant $\delta:=\delta(\overline M,g,p,q,r)>0$ such that
    $$
    \delta\|f\|_{L^q(M)}\leq \|\Delta f\|_{L^p(M)}+\|f\|_{L^r(\partial M)}$$
    for all $f\in C^0(\overline M)\cap C^2(M)$ such that $\Delta f$ has a continuous continuation to the boundary.
    \end{thm}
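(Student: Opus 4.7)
The plan is to derive a representation of $f$ in terms of $\Delta f$ and the boundary values $f|_{\partial M}$ via the Dirichlet Green function on $\overline M$, and then invoke the uniform bounds of Theorem \ref{green function uniform} to reduce the inequality to the $L^p \to L^q$ boundedness of two weakly singular integral operators.

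Let $G$ denote the Dirichlet Green function of $\Delta$ on $(\overline M, g)$, whose existence and basic regularity are classical. For smooth $f$ on $\overline M$, Green's second identity applied to $f$ and $G(x,\cdot)$ yields
\begin{equation*}
f(x) = \int_M G(x,y)\, \Delta f(y)\, dV_g(y) + \int_{\partial M} f(y)\, \partial_{\nu_y} G(x,y)\, dS_g(y),
\end{equation*}
with $\nu$ the outward unit normal (up to the sign convention for $G$). For $f \in C^0(\overline M) \cap C^2(M)$ with $\Delta f$ extending continuously to $\partial M$, the identity persists by a standard interior approximation: write the formula on the retracted subdomain $\{y \in M : d(y, \partial M) \geq \varepsilon\}$, and pass to the limit using the kernel bounds of Theorem \ref{green function uniform} together with dominated convergence.

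Taking absolute values and applying Theorem \ref{green function uniform} gives, for $n \geq 3$,
\begin{equation*}
|f(x)| \leq C \int_M d(x,y)^{2-n}\, |\Delta f(y)|\, dV(y) + C \int_{\partial M} d(x,y)^{1-n}\, |f(y)|\, dS(y),
\end{equation*}
with the analogous logarithmically weighted estimate in dimension two. Taking $L^q(M)$ norms and using Minkowski's integral inequality, the theorem is reduced to the following two assertions: (i) the operator $h \mapsto \int_M d(\cdot,y)^{2-n} h(y)\, dV(y)$ is bounded from $L^p(M)$ to $L^q(M)$ whenever $q(n-2p)<np$; and (ii) the operator $h \mapsto \int_{\partial M} d(\cdot,y)^{1-n} h(y)\, dS(y)$ is bounded from $L^r(\partial M)$ to $L^q(M)$ whenever $q(n-1)<nr$. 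These are standard Hardy--Littlewood--Sobolev-type estimates: (i) is the Riesz potential of order $2$ on the compact $n$-manifold $\overline M$, while (ii) is the analogous statement with the source measure supported on the $(n-1)$-dimensional boundary. I would verify both via a Schur test, using the elementary bounds $\int_{B_r(x)} d(x,y)^{-\alpha}\, dV \leq C r^{n-\alpha}$ for $\alpha<n$ and $\int_{B_r(x)\cap\partial M} d(x,y)^{-\alpha}\, dS \leq C r^{n-1-\alpha}$ for $\alpha<n-1$; the strict scaling inequalities in the hypotheses are precisely what is needed to find admissible intermediate exponents.

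The main obstacle is the two-dimensional case, where $G$ has a logarithmic singularity and $|\nabla_y G|$ acquires an additional logarithmic factor. This is exactly why the statement imposes $p>1$ whenever $q>1$ when $n=2$: the extra $|\ln d(x,y)|$ must be absorbed into a kernel of the form $d(x,y)^{-\varepsilon}$ by an initial H\"older step, which costs a small amount of integrability on the $\Delta f$ side and is not available when starting from $L^1$. The excluded borderline case reflects the genuine failure of the Sobolev embedding $W^{2,1}\hookrightarrow L^q$ for $q>1$ in dimension two, and is therefore a feature of the problem rather than a deficiency of the method.
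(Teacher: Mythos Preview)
Your approach is essentially the same as the paper's: Green representation via Lemma~\ref{green function:noncompact}, pointwise control of the two kernels through Theorem~\ref{green function uniform}, and then $L^p\to L^q$ boundedness of the resulting weakly singular operators. The paper carries out the last step explicitly in Lemma~\ref{lem:basic lemma 3} using H\"older's inequality for three functions, which is exactly the weighted Schur test you gesture at when you speak of ``admissible intermediate exponents.''

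One correction to your closing remark: the claim that the excluded case $n=2$, $p=1$, $q>1$ ``reflects the genuine failure of the Sobolev embedding $W^{2,1}\hookrightarrow L^q$ for $q>1$ in dimension two'' is false. In two dimensions $W^{2,1}\hookrightarrow W^{1,2}\hookrightarrow L^q$ for every finite $q$, and more directly the logarithmic kernel $|G(x,\cdot)|$ lies in $L^s(M)$ uniformly in $x$ for every $s<\infty$ (this is the paper's Lemma~\ref{lem:gradient 1}(iii)), so Young's inequality gives $L^1\to L^q$ boundedness of the volume term for all $q<\infty$. The restriction in the statement is an artifact of the paper's particular H\"older step (which in Case~2 of Lemma~\ref{lem:basic lemma 3} pairs $|G|$ against $|f|$ with conjugate exponent $p'=\infty$ when $p=1$), not a genuine obstruction, so your heuristic explanation for it should be dropped.
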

     Clearly, we can take $1\leq p=q=r<\infty$ in Theorem \ref{thm:graident boundary}
     and Theorem \ref{thm:laplace boundary}. If $M\subset\mr^n$ is a bounded open subset and $p<n$, 
     then Theorem \ref{thm:graident boundary} can be seen as a generalization of \cite[Theorem 1.2]{maggi} (see also \cite[Formula (2.1)]{Anv}) 
     if we overlook the estimates for the constants in the inequalities, 
     and it is also a generalization of \cite[Corollary 1.5]{DJQ}. 
     Theorem \ref{thm:laplace boundary} still hold when $p=q=r=\infty$ and $M\subset\mr^n$ is a bounded open subset, see Theorem 6.2 of \cite[Chapter 2]{chen}. 
     But we don't know whether Theorem \ref{thm:graident boundary} holds or not if $p=q=r=\infty$. 
     
     We have the following consequence from Theorem \ref{thm:laplace boundary} for harmonic functions.
    \begin{cor}\label{cor:laplace boundary}
    Let $(\overline M,g)$ be a compact Riemannian manifold with boundary, and let $p,q$ satisfy
    $$
         1\leq p<\infty,\ 1\leq q<\infty,\ q(n-1)<np.
    $$
    Then there is a constant $\delta:=\delta(\overline M,g,p,q)>0$ such that
    $$
    \delta\|f\|_{L^q(M)}\leq \|f\|_{L^p(\partial M)}$$
    for any harmonic function $f$ on $M$ with $f\in C^0(\overline M)$.
    \end{cor}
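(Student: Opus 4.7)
The plan is to deduce the corollary directly from Theorem \ref{thm:laplace boundary}, exploiting that $\Delta f \equiv 0$ for harmonic $f$ so that the Laplacian term in that theorem's inequality drops out.

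First I note that any harmonic function $f \in C^0(\overline M)$ is automatically smooth on $M$ by elliptic regularity, and $\Delta f \equiv 0$ extends trivially (as the zero function) to $\overline M$; hence $f$ is an admissible test function in Theorem \ref{thm:laplace boundary}. It then remains to pick an interior Laplacian exponent $p_0$ for which Theorem \ref{thm:laplace boundary} can be applied to the triple $(p_0, q, p)$ (in its notation, i.e.\ with $r := p$). The boundary condition $q(n-1) < np$ in Theorem \ref{thm:laplace boundary} is exactly the corollary's hypothesis, while the interior condition $q(n-2p_0) < np_0$ rearranges to $p_0 > \frac{qn}{n+2q}$, a quantity strictly less than $q$. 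Thus any $p_0 \geq \max\{q,2\}$ satisfies this, and, in the special case $n=2$ with $q>1$, also the supplementary requirement $p_0 > 1$.

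Fixing such a $p_0$, which depends only on $q$ and $n$, and applying Theorem \ref{thm:laplace boundary} yields a constant $\delta > 0$ depending only on $(\overline M, g, p, q)$ with
$$
\delta \, \|f\|_{L^q(M)} \leq \|\Delta f\|_{L^{p_0}(M)} + \|f\|_{L^p(\partial M)} = \|f\|_{L^p(\partial M)},
$$
as required. There is no genuine obstacle here: the corollary is essentially a tautological consequence of Theorem \ref{thm:laplace boundary} once one verifies that the admissible range for $p_0$ is nonempty, and the only mild issue is the bookkeeping in the $n=2$ case, which is handled by the lower bound $p_0 \geq 2$.
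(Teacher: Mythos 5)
Your proof is correct and follows exactly the approach the paper takes, namely reading off the corollary as an immediate consequence of Theorem \ref{thm:laplace boundary} by applying it to the triple $(p_0,q,r)=(p_0,q,p)$ for a suitable interior exponent $p_0$ and noting that the $\|\Delta f\|_{L^{p_0}(M)}$ term vanishes. Your verification that the admissible range for $p_0$ is nonempty (namely $p_0>\frac{qn}{n+2q}$, and the extra requirement $p_0>1$ when $n=2$, $q>1$) correctly supplies the bookkeeping the paper leaves implicit.
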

    \indent When $p=q=2$, Corollary \ref{cor:laplace boundary} follows from \cite[Proposition 1.8, Chapter 5]{taylor}.

With similar idea, we can get Poincar\'e-type inequalities for the gradient operator and Laplacian on compact Riemannian manifolds without boundary. 

     \begin{thm}\label{thm:poincare compact}
    Let $(M,g)$ be a compact Riemannian manifold without boundary, and let $p,q$ satisfy
    $$
         1\leq p<\infty,\ 1\leq q<\infty,\ q(n-p)\leq np.
    $$
     There is a constant $\delta:=\delta(M,g,p,q)>0$ such that
    $$
    \delta\|f-f_{\operatorname{ave}}\|_{L^q(M)}\leq \|\nabla f\|_{L^p(M)},\ \forall f\in C^1(M),$$
    where 
    $$f_{\ave}:=\frac{1}{|M|}\int_{M}fdV.$$
     Furthermore, when $p>n$, there is a constant $\delta:=\delta(M,g,p)>0$ such that 
    $$\delta\|f-f_{\operatorname{ave}}\|_{L^\infty(M)}\leq \|\nabla f\|_{L^p(M)},\ \forall f\in C^1(M).$$ 
    \end{thm}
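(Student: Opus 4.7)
The plan follows the paper's guiding philosophy of representing $f - f_{\ave}$ through a Green-function identity and then reducing to a Riesz-potential estimate. First I would construct the Green function $G(x,y)$ of $\Delta$ on the closed manifold $M$, normalized by
$$\Delta_y G(x,y) = \delta_x(y) - \frac{1}{|M|}, \qquad \int_M G(x,y)\,dV(y) = 0,$$
and establish the closed-manifold analogue of Theorem \ref{green function uniform}, i.e., $|\nabla_y G(x,y)| \leq C d(x,y)^{1-n}$ for $n \geq 3$ (with a logarithmic correction when $n = 2$). This should follow by adapting the argument of Theorem \ref{green function uniform}, since the singularity of $G$ along the diagonal is local and governed by the Euclidean parametrix of $\Delta$, while the mean-zero normalization only perturbs a smooth remainder.

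With the Green function in hand, Green's identity (no boundary terms, since $\partial M = \emptyset$) gives for $f \in C^2(M)$
$$f(x) - f_{\ave} = -\int_M \langle \nabla_y G(x,y), \nabla f(y)\rangle_g\, dV(y),$$
and this extends to $f \in C^1(M)$ by mollification. Substituting the uniform gradient bound yields the pointwise Riesz-potential inequality
$$|f(x) - f_{\ave}| \leq C \int_M d(x,y)^{1-n}\, |\nabla f(y)|\, dV(y).$$
The kernel $d(x,y)^{1-n}$ lies in $L^s(M)$ uniformly in $x$ for every $s < n/(n-1)$, and in particular in $L^1(M)$. For $q(n-p) < np$ strictly, the exponent $s$ defined by $1+1/q = 1/s + 1/p$ stays strictly below $n/(n-1)$, so Young's inequality (equivalently, Minkowski's integral inequality) yields the bound $\|f - f_{\ave}\|_{L^q(M)} \leq C\|\nabla f\|_{L^p(M)}$ whenever $q \geq p$; the remaining subcritical pairs with $q < p$ are obtained by H\"older's inequality, reducing to the diagonal case $q = p$. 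For $p > n$, the condition $(1-n)p' > -n$ makes $\int_M d(x,y)^{(1-n)p'}\,dV(y)$ bounded uniformly in $x$, and applying H\"older to the pointwise bound gives the $L^\infty$ estimate directly.

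The main obstacles are twofold. First, at the Sobolev endpoint $q(n-p) = np$ with $1 < p < n$, the kernel $d^{1-n}$ is only weak-$L^{n/(n-1)}$, so Young's inequality fails and the Hardy--Littlewood--Sobolev inequality on $(M,g)$ must be invoked in its place. The borderline case $p = 1$, $q = n/(n-1)$ is more delicate still, since the Riesz-potential map is only of weak type there; a separate strong-type argument, for instance the classical Gagliardo level-set technique or a duality argument, is needed to close the gap. Second, transferring the uniform Green-function estimates of Theorem \ref{green function uniform} from the boundary setting to the closed setting --- where $G$ is normalized by the mean-zero condition rather than by vanishing on $\partial M$ --- will require some care in the parametrix/perturbation construction, though the basic scheme used for Theorem \ref{green function uniform} should carry over.
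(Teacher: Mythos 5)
Your Green-function route handles the subcritical range correctly and essentially reproduces what the paper has in mind when it says ``for the other cases, one may modify the proof of Theorem~\ref{thm:graident boundary} a little.'' Two remarks on the obstacles you flag. First, the concern about transferring the uniform gradient bound from the boundary setting to the closed setting is unnecessary: the paper already records exactly the estimate $|\nabla_y G(x,y)|\leq C\,d(x,y)^{1-n}$ (with a logarithmic factor when $n=2$) for closed manifolds in Lemma~\ref{green function:compact}, which is imported from Aubin's book, so nothing new needs to be proved there.

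Second, and this is the real gap, the Sobolev endpoint $q(n-p)=np$ with $1\leq p<n$ is genuinely out of reach of the Young/Riesz-potential argument, and your proposal does not close it --- it only names HLS for $1<p<n$ and gestures at a Gagliardo truncation or duality argument for $p=1$. The paper avoids this problem entirely by citing \cite{Hebey} (Theorem 2.11) for the full range $1\leq p<n$ endpoint included, and reserves the Green-function argument for the cases $p\geq n$ where no endpoint issue arises. If you want a self-contained proof, you would actually have to execute the HLS-on-$(M,g)$ step and supply the $p=1$ truncation argument; as a plan that merely acknowledges these as obstacles, the proposal stops short of a complete proof of the stated theorem, whereas the paper closes the case with a citation.
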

    \begin{comment}
    \begin{thm}\label{thm:laplace compact}
    Let $(M,g)$ be a compact Riemannian manifold without boundary (of dimension $n\geq 2$). Assume $p,q$ satisfy
    $$
         1\leq p<\infty,\ 1\leq q<\infty,\ q(n-2p)<np,
    $$
    and if $n=2$, we moreover require  $p>1$ if $q>1$. Then there is a constant $\delta:=\delta(M,g,p,q)>0$ such that
    $$
    \delta\|f-f_{\ave}\|_{L^q(M)}\leq \|\Delta f\|_{L^p(M)},\ \forall f\in C^2(\overline M).$$
    \end{thm}
    \end{comment}
     When $1\leq p< n$, Theorem \ref{thm:poincare compact} has been proved in \cite{Hebey}, see Theorem 2.11 there. 
     For the other cases, one may modify the proof of Theorem \ref{thm:graident boundary} a little to get them.
     \begin{thm}\label{thm:laplace compact1}
    Let $(M,g)$ be a compact Riemannian manifold without boundary. Assume $p,q$ satisfy
    $$
         1\leq p<\infty,\ 1\leq q<\infty,\ q(n-2p)<np,
    $$
    and if $n=2$, we moreover require $p>1$ if $q>1$. Then there is a constant $\delta:=\delta(M,g,p,q)>0$ such that
    $$
    \delta\|f-f_{\ave}\|_{L^q(M)}\leq \|\Delta f\|_{L^p(M)},\ \forall f\in C^2(M).$$
      Furthermore, when $p>\frac{n}{2}$, there is a constant $\delta:=\delta(M,g,p)>0$ such that 
    $$\delta\|f-f_{\operatorname{ave}}\|_{L^\infty(M)}\leq \|\Delta f\|_{L^p(M)},\ \forall f\in C^2(M).$$ 
    \end{thm}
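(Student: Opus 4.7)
\emph{Proof plan.} The strategy mirrors that of Theorem~\ref{thm:laplace boundary}, with the Green function on a closed manifold playing the role of the Green function on a manifold with boundary. First, I would establish (or invoke) the closed-manifold analogue of Theorem~\ref{green function uniform}: on a compact Riemannian manifold $(M,g)$ without boundary, there exists a Green function $G(x,y)$, smooth off the diagonal, satisfying $\Delta_y G(x,\cdot) = \delta_x - 1/|M|$ together with $\int_M G(x,y)\, dV(y)=0$, and obeying the uniform bounds
\[
|G(x,y)|\leq C d(x,y)^{2-n}\ (n\geq 3),\qquad |G(x,y)|\leq C(1+|\ln d(x,y)|)\ (n=2),
\]
with $C=C(M,g)$. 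This is essentially classical (parametrix construction, as in Aubin's book) and can also be extracted from Theorem~\ref{green function uniform} by cutting out a small geodesic ball around a point and invoking the boundary case, or by doubling. With this in hand, the Green representation gives
\[
f(x)-f_{\ave}=\int_M G(x,y)\,\Delta f(y)\,dV(y),
\]
so that $|f(x)-f_{\ave}|\leq \int_M |G(x,y)|\,|\Delta f(y)|\,dV(y)$.

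Next, I would deduce the $L^q$ bound via a generalized Young-type estimate for integral operators on compact manifolds: if a kernel $K$ satisfies $\sup_x \|K(x,\cdot)\|_{L^s(M)}+\sup_y \|K(\cdot,y)\|_{L^s(M)}<\infty$, then $Tg(x)=\int K(x,y)g(y)\,dV(y)$ is bounded from $L^p(M)$ to $L^q(M)$ whenever $1+1/q=1/p+1/s$. For $n\geq 3$, the bound $|G(x,y)|\leq C d(x,y)^{2-n}$ places $G(x,\cdot)$ in $L^s(M)$ uniformly in $x$ for every $s<n/(n-2)$, and by symmetry of $G$ the same holds in the other variable. The hypothesis $q(n-2p)<np$ is equivalent to $1/p-1/q<2/n$, which is precisely the condition permitting a choice of such an $s$ with $1+1/q=1/p+1/s$. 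For $n=2$ the same argument applies with the logarithmic kernel, and the extra constraint ``$p>1$ if $q>1$'' is exactly what is needed to accommodate the mild log blow-up of $K$ at the endpoint.

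For the $L^\infty$ statement under $p>n/2$, I would apply H\"older's inequality directly:
\[
|f(x)-f_{\ave}|\leq \|G(x,\cdot)\|_{L^{p'}(M)}\|\Delta f\|_{L^p(M)},
\]
and verify that $\|G(x,\cdot)\|_{L^{p'}(M)}$ is uniformly bounded in $x$. For $n\geq 3$ this reduces to $(n-2)p'<n$, i.e., $p>n/2$; for $n=2$ the log kernel lies in every $L^{p'}$ with $p'<\infty$, and $p>n/2=1$ suffices.

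The main obstacle is the first step: producing the Green function on the closed manifold $(M,g)$ with the stated uniform pointwise bounds, with constants depending only on $(M,g)$. Once that is in place, the rest is a routine application of Young/Hardy--Littlewood--Sobolev-type kernel estimates together with the Green representation, and the $L^q$ and $L^\infty$ statements follow from the singularity structure of $G$ alone.
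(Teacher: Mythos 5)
Your plan matches the paper's intended argument: the authors explicitly say Theorem~\ref{thm:laplace compact1} follows ``by the same techniques as in the proof of Theorem~\ref{thm:graident boundary} and \ref{thm:laplace boundary}, since we have Lemma~\ref{green function:compact} and \ref{lem:laplace 1},'' and that is exactly what you do --- invoke the Green representation $f(x)-f_{\ave}=\int_M G(x,y)\Delta f(y)\,dV(y)$ from Lemma~\ref{green function:compact}, together with the uniform pointwise bounds on $G$ (Aubin's bounds $-Cd^{2-n}\le G\le C$ imply $|G|\le C'd^{2-n}$ on a bounded manifold, as you use), and then a kernel estimate. Where the paper's Lemma~\ref{lem:basic lemma 3} carries out a three-function H\"older computation (which is precisely a hands-on proof of the generalized Young/Schur inequality), you cite the generalized Young inequality as a black box; these are the same argument in different clothing. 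The $L^\infty$ part under $p>n/2$ via H\"older and $\|G(x,\cdot)\|_{L^{p'}}$ being uniformly bounded is also correct and is the intended route.

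One small inaccuracy worth noting: you claim the $n=2$ hypothesis ``$p>1$ if $q>1$'' is ``exactly what is needed'' for the logarithmic kernel. In fact, with the Young-type argument you propose, that hypothesis is not needed at all --- taking $s=q$ in Young's inequality handles $p=1$, any $q<\infty$, since $(1+|\ln d(x,y)|)\in L^q$ uniformly for every finite $q$. The hypothesis appears in the paper because the authors' proof of Lemma~\ref{lem:basic lemma 3} for $n=2$ uses the cruder bound $|B_Mf(x)|\le\|G(x,\cdot)\|_{L^{p'}}\|f\|_{L^p}$ (H\"older into $L^\infty$, then $L^\infty\subset L^q$), which requires $p'<\infty$, i.e.\ $p>1$. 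So the hypothesis is an artifact of their chosen estimate, not a genuine obstruction, and your route would in fact give a marginally stronger conclusion at $n=2$, $p=1$. This does not affect the correctness of what you have proved.
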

     When $p=q=2$, Theorem \ref{thm:laplace compact1} is known, one may see \cite[Lemma 10.4.9]{Nico}.
     
    By the same idea, Theorem \ref{green function uniform}, \ref{thm:laplace boundary} and \ref{thm:laplace compact1} can be generalized to general uniformly elliptic operators of second order.
   We omit the details here.

     We observe that Theorem \ref{thm:laplace boundary} and Corollary \ref{cor:laplace boundary} can be generalized to (quasi)-subharmonic functions (not necessarily continuous) on manifolds. 
      We first recall some notions. 
      
      Assume $(M,g)$ is a Riemannian manifold without boundary, then an upper semi-continuous function $f\colon M\rw [-\infty,\infty)$ is said to be \emph{subharmonic} 
      if $f\not\equiv -\infty$ , and for any open subset $U\subset\subset M$, 
      any function $h\in C^0(\overline U)$ which is harmonic in $U$, we have $f\leq h$ in $U$ if $f\leq h$ on $\partial U$.
      The space of all subharmonic functions on $M$ is denoted by $\operatorname{SH}(M)$. 
      For any $f\in \operatorname{SH}(M)$, one can show that $f$ is locally integrable and $\Delta f\geq 0$ in the sense of distribution.
     
      A function $f\colon M\rw [-\infty,\infty)$ is \emph{quasi-subharmonic} if it can be locally represented by $f=f_1+f_2$ with $f$ subharmonic and $f_2$ in $C^2$. 
      The space of all quasi-subharmonic functions on $M$ is denoted by $\operatorname{QSH}(M)$.
       For any $f\in \operatorname{QSH}(M)$, 
      we use $|\Delta f|$ to denote the variation measure of $\Delta f$.
      We have $|\Delta f|=\Delta f$ if $f$ is subharmonic on $M$.

     Now we can state several integral inequalities about (quasi-)subharmonic functions.
    \begin{thm}\label{thm:subharmonic1}
    Let $(M,g)$ be a Riemannian manifold without boundary, and let $\Omega\subset\subset M$ be an open subset with smooth boundary.
    Assume $p,q$ satisfy 
    $$ 1\leq p<\infty,\ 1\leq q<\frac{np}{n-1},$$
    and if $n=2$, we moreover require $q=1$. Then there is a constant $\delta:=\delta(\Omega,M,g,p,q)>0$ such that 
    $$\delta\|f\|_{L^q(\Omega)}\leq \int_{\Omega}|\Delta f|+\|f\|_{L^p(\partial\Omega)},\ \forall f\in \operatorname{QSH}(M).$$
    \end{thm}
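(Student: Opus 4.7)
The plan is to establish a Green--Riesz representation for $f \in \operatorname{QSH}(M)$ on $\Omega$ and then apply the uniform Green function bounds from Theorem~\ref{green function uniform} together with Corollary~\ref{cor:laplace boundary}. Let $G$ denote the Dirichlet Green function on $\overline{\Omega}$ furnished by Theorem~\ref{green function uniform}. The target identity is
$$ f(x) = \int_\Omega G(x, y)\, d(\Delta f)(y) + \int_{\partial\Omega} f(y)\, \frac{\partial G(x, y)}{\partial \nve_y}\, dS(y), \quad x \in \Omega, $$
where $\Delta f$ denotes the signed Radon measure given by the distributional Laplacian of $f$. For $f \in C^2$ this is the classical Green's identity. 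To extend it to general $f \in \operatorname{QSH}(M)$, I would work locally with a decomposition $f = f_1 + f_2$, $f_1 \in \operatorname{SH}$ and $f_2 \in C^2$, approximate $f_1$ from above by its chart mollifications $f_1 * \rho_\varepsilon$ (smooth and subharmonic on slightly shrunken subdomains), apply the classical formula to $f_1 * \rho_\varepsilon + f_2$, and pass to the limit using monotone/dominated convergence together with the weak convergence of $\Delta(f_1 * \rho_\varepsilon)$ to $\Delta f_1$ as Radon measures.

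Granting this representation, I would estimate the two summands separately. For the volume term, Minkowski's integral inequality yields
$$ \left\| \int_\Omega G(\cdot, y)\, d(\Delta f)(y) \right\|_{L^q(\Omega)} \le \Big( \sup_{y \in \Omega} \|G(\cdot, y)\|_{L^q(\Omega)} \Big) \int_\Omega d|\Delta f|, $$
and the pointwise bounds $|G(x, y)| \le C d(x, y)^{2-n}$ for $n \ge 3$ and $|G(x, y)| \le C(1 + |\log d(x, y)|)$ for $n = 2$ from Theorem~\ref{green function uniform}, combined with a polar coordinate computation near the pole, make this supremum finite under the stated range of $q$ (unconditionally for $n = 2$; for $n \ge 3$ one only needs $q < n/(n-2)$, which is the implicit volume-term constraint). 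For the boundary term,
$$ u(x) := \int_{\partial\Omega} f(y)\, \frac{\partial G(x, y)}{\partial \nve_y}\, dS(y) $$
is the harmonic function on $\Omega$ with Dirichlet data $f|_{\partial\Omega}$, so Corollary~\ref{cor:laplace boundary} applied to $\Omega$ supplies $\|u\|_{L^q(\Omega)} \le C \|f\|_{L^p(\partial\Omega)}$ under the hypothesis $q(n - 1) < np$; the $n = 2$ restriction $q = 1$ inherited from the corresponding clause of Theorem~\ref{thm:laplace boundary} accounts for the exceptional case in the hypothesis. Adding the two estimates gives the claimed inequality.

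\textbf{Main obstacle.} The technical heart of the argument is the Green--Riesz identity for $f \in \operatorname{QSH}(M)$: since $f$ may equal $-\infty$ on a polar set, the identity must be understood almost everywhere and obtained through approximation. The delicate point is passing to the limit simultaneously in the volume integral (where I need weak convergence of $\Delta(f_1 * \rho_\varepsilon)$ with uniformly bounded total variation) and in the boundary integral (where I need convergence of $(f_1 * \rho_\varepsilon)|_{\partial\Omega}$ in $L^p(\partial\Omega)$ while respecting the subharmonic structure). Once the representation is established, the remaining $L^q$ bounds are straightforward applications of Minkowski's inequality and Corollary~\ref{cor:laplace boundary}.
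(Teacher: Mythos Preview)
Your overall strategy is exactly the paper's: establish the Green--Riesz representation for $f\in\operatorname{QSH}(M)$ (this is Theorem~\ref{thm:green riesz}) and then bound the volume and boundary integrals separately via the estimates underlying Theorem~\ref{thm:laplace boundary}. The one substantive difference is in how Green--Riesz itself is proved. You propose chart mollification of the subharmonic part and passage to the limit, correctly flagging the simultaneous convergence in the two integrals as the delicate step. The paper avoids approximation entirely: working on a neighborhood $U\supset\supset\overline\Omega$ with Green function $\Gamma$, one has $\Delta^{\operatorname{distr}}\bigl(f-\int_\Omega\Gamma(\cdot,y)\,d(\Delta f)(y)\bigr)=0$ on $U$, so Weyl's lemma produces a harmonic $h$ with equality almost everywhere, and the identity principle for subharmonic functions upgrades this to everywhere. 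Writing $\Gamma=G+\Phi$ with $\Phi(\cdot,y)$ harmonic in $\Omega$ and $\Phi\in C^0(\overline\Omega\times\overline\Omega)$ then gives $f=\int_\Omega G\,d(\Delta f)+l$ with $l\in C^0(\overline\Omega)$ harmonic; since $G$ vanishes on $\partial\Omega$, one reads off $l|_{\partial\Omega}=f|_{\partial\Omega}$, whence $l$ is the Poisson integral. This direct route sidesteps your main obstacle and, as a bonus, shows your $u$ lies in $C^0(\overline\Omega)$, so invoking Corollary~\ref{cor:laplace boundary} is legitimate; the paper instead bounds the boundary integral operator directly as in Lemma~\ref{lem:basic lemma 2}(ii), which is equivalent but does not need continuity of the boundary data.
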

     \begin{thm}\label{thm:subharmonic boundary}
    Let $(M,g)$ be a Riemannian manifold without boundary, and let $\Omega\subset\subset M$ be an open subset with smooth boundary.
    Assume $p,q$ satisfy
    $$
         1\leq p<\infty,\ 1\leq q<\frac{np}{n-1}.
    $$
    Then there is a constant $\delta:=\delta(\Omega,M,g,p,q)>0$ such that
    $$
    \delta\|f\|_{L^q(\Omega)}\leq \|f\|_{L^p(\partial \Omega)}$$
    for all nonnegative $f\in \sh(M)$.
    \end{thm}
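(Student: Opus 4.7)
The plan is to reduce to smooth nonnegative subharmonic functions on $\overline\Omega$ by monotone approximation, and for such $f$ to use the Green--Riesz formula with a favorable sign analysis to dominate $f$ pointwise by a boundary integral, then deploy a Young-inequality-in-slices estimate of the right scaling.

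Let $G$ be the Dirichlet Green function of the Laplacian on $\Omega$, normalized so that $\Delta_yG(x,y)=\delta_x(y)$ and $G|_{\partial\Omega}=0$; the maximum principle gives $G\le 0$ in $\Omega$ and $\partial G/\partial\nve_y\ge 0$ on $\partial\Omega$. For smooth $f\geq 0$ subharmonic on $\overline\Omega$, Green's identity yields
$$f(x)=\int_\Omega G(x,y)\Delta f(y)\,dV+\int_{\partial\Omega}f(y)\frac{\partial G(x,y)}{\partial\nve_y}\,dS(y),$$
and since $G\leq 0$ while $\Delta f\geq 0$, the volume term is non-positive, leaving
$$0\leq f(x)\leq\int_{\partial\Omega}f(y)K(x,y)\,dS(y),\qquad K(x,y):=\frac{\partial G(x,y)}{\partial\nve_y}\geq 0.$$
By Theorem~\ref{green function uniform}, $K(x,y)\leq Cd(x,y)^{1-n}$ when $n\geq 3$ (with a harmless extra $1+|\ln d(x,y)|$ factor when $n=2$), so it suffices to establish the bound $\|Tg\|_{L^q(\Omega)}\leq C\|g\|_{L^p(\partial\Omega)}$ for $Tg(x):=\int_{\partial\Omega}d(x,y)^{1-n}g(y)\,dS(y)$ in the range $1\leq p<\infty$, $1\leq q<np/(n-1)$.

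For this operator bound, the case $q<p$ reduces to $q=p$ by H\"older's inequality on $\Omega$ (the condition $p<np/(n-1)$ being automatic), so assume $q\geq p$. Fix a tubular neighborhood $U\cong\partial\Omega\times[0,t_0)\subset\Omega$ of $\partial\Omega$; outside $U$ the kernel $K$ is uniformly bounded and contributes trivially. For $x=(y_0,t)\in U$ and $y\in\partial\Omega$, $d(x,y)^2\asymp\rho(y_0,y)^2+t^2$ where $\rho$ denotes the induced distance on $\partial\Omega$, so $Tg(y_0,t)\leq C(\phi_t*g)(y_0)$ with $\phi_t(z):=(|z|^2+t^2)^{-(n-1)/2}$, the convolution being realized on $\partial\Omega$ via a finite partition of unity in boundary charts. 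A direct computation yields $\|\phi_t\|_{L^r(\mr^{n-1})}\asymp t^{(n-1)(1/r-1)}$ for $r>1$, with $\|\phi_t\|_{L^1}\asymp|\ln t|$ handling the borderline. Choosing $r$ by $1+1/q=1/p+1/r$, Young's inequality gives
$$\|Tg(\cdot,t)\|_{L^q(\partial\Omega)}\leq Ct^{-(n-1)(1/p-1/q)}\|g\|_{L^p(\partial\Omega)},$$
and $t$-integration of the $q$-th power over $(0,t_0)$ is finite precisely when $q(n-1)(1/p-1/q)<1$, i.e.\ $q<np/(n-1)$. An arbitrary nonnegative $f\in\sh(M)$ is then handled by approximating from above by a decreasing sequence $f_j\searrow f$ of smooth subharmonic functions (obtained by local mollification on a slight enlargement of $\Omega$ inside $M$), applying the smooth estimate to each $f_j$, and passing to the limit via monotone convergence on both sides.

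The main technical step is the slicing computation producing the sharp exponent range $q<np/(n-1)$: the Young conjugate $r$ must be calibrated so that the convolution gain on $\partial\Omega$ balances the singularity of $\|\phi_t\|_{L^r}$ as $t\to 0^+$, and the $(n-1)$-dimensionality of $\partial\Omega$ combined with the transverse $t$-integration conspires to yield exactly the critical exponent. The sign-based elimination of the volume term and the subharmonic approximation are routine within the framework deployed for Theorem~\ref{thm:subharmonic1}; the $n=2$ case requires only checking that the extra logarithmic factor in $K$ leaves the range $q<2p$ intact, which is immediate since the slicing integrability has nontrivial slack away from the endpoint.
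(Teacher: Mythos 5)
Your proof captures the essential idea that makes this theorem specific among the three (quasi-)subharmonic estimates: since $G\le 0$ and $\Delta f\ge 0$, the volume term in the Green--Riesz representation is non-positive and can be discarded, reducing the problem to the boundary kernel operator. The route you take to the $L^p(\partial\Omega)\to L^q(\Omega)$ bound, however, is genuinely different from the paper's. The paper invokes Lemma~\ref{lem:basic lemma 2}(ii), a three-function H\"older argument with an auxiliary interpolation exponent $b$ chosen so that the two sup-type kernel integrals of Lemma~\ref{lem:gradient 1} both fall into the admissible range; you instead straighten a collar via a tubular neighborhood, dominate the kernel by the scaled profile $\phi_t(z)=(|z|^2+t^2)^{-(n-1)/2}$ acting as a convolution on $\partial\Omega$, apply Young's inequality slice-by-slice, and integrate the $q$th power in the normal variable. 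Both yield precisely $q(n-1)<np$, but your slicing makes the dimensional bookkeeping and the borderline at $q=np/(n-1)$ visually transparent, whereas the paper's argument has the advantage of being the same mechanism it already uses for the gradient and Laplacian theorems; your $n=2$ check is also correct, since the log factor in $|\nabla_y G|$ only costs a logarithmic weight in the $t$-integral, which stays integrable off the endpoint.

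The one step that is thinner than you present it is the reduction to smooth $f$. On a Riemannian manifold, mollifying a subharmonic function by a local kernel does not automatically produce a subharmonic function (the Laplace--Beltrami operator is not translation-invariant in a chart), so ``local mollification on a slight enlargement of $\Omega$'' is not yet a proof that decreasing smooth subharmonic approximants exist; one needs a genuine smoothing theorem for subharmonic functions on manifolds. The paper avoids this entirely: Theorem~\ref{thm:green riesz} establishes the Green--Riesz representation directly for any $f\in\operatorname{QSH}(M)$ by decomposing $G=\Gamma-\Phi$, applying Weyl's lemma, and using the fact that a subharmonic function is determined pointwise by its a.e.\ class, so no smooth approximants are ever introduced. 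Once you replace the mollification claim by a citation to such a smoothing result, or by the paper's Theorem~\ref{thm:green riesz} itself, your argument is complete and correct.
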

    \begin{thm}\label{thm:subharmonic2}
    Let $(M,g)$ be a compact Riemannian manifold without boundary.
    Assume   $p,q$ satisfy 
    $$ 1\leq p<\infty,\ 1\leq q<\frac{np}{n-1},$$
    and if $n=2$, we moreover require $q=1$. Then there is a constant $\delta:=\delta(M,g,p,q)>0$ such that 
    $$\delta\|f-f_{\ave}\|_{L^q(M)}\leq \int_{M}|\Delta f|,\ \forall f\in \operatorname{QSH}(M).$$
    \end{thm}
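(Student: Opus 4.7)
The plan is to mirror the proof of Theorem~\ref{thm:subharmonic1}, replacing the Dirichlet Green function on $\overline{\Omega}$ by the Green function for the closed manifold $(M,g)$. On such a manifold there is a unique function $G\in C^\infty((M\times M)\setminus\diag)$ satisfying $\Delta_y G(x,\cdot)=\delta_x-|M|^{-1}$ in the distributional sense, together with the normalization $\int_M G(x,y)\,dV(y)=0$.

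My first step is to show that this $G$ enjoys the pointwise bounds analogous to Theorem~\ref{green function uniform}: $|G(x,y)|\le C\,d(x,y)^{2-n}$ when $n\ge 3$, and $|G(x,y)|\le C(1+|\log d(x,y)|)$ when $n=2$. Since $M$ is compact without boundary, the parametrix construction used for Theorem~\ref{green function uniform} goes through with essentially no change; the extra term $-|M|^{-1}$ contributes only a smooth, globally bounded remainder and does not affect the singular behavior near the diagonal.

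Granting this estimate, the Green representation
\[
f(x)-f_{\ave}=\int_M G(x,y)\,\Delta f(y)\,dV(y)
\]
holds for every $f\in C^2(M)$. To extend it to $f\in\operatorname{QSH}(M)$, I would mollify $f$ in local charts via a partition of unity to obtain smooth $f_k\to f$ in $L^1(M)$ with $\Delta f_k\to\Delta f$ weakly as signed Radon measures and $|\Delta f_k|(M)\to|\Delta f|(M)$; passing to the limit yields $f(x)-f_{\ave}=\int_M G(x,y)\,d(\Delta f)(y)$ for a.e.\ $x\in M$. Minkowski's integral inequality then gives
\[
\|f-f_{\ave}\|_{L^q(M)}\le\int_M\|G(\cdot,y)\|_{L^q(M)}\,d|\Delta f|(y),
\]
and the pointwise bound on $G$ implies $\sup_{y\in M}\|G(\cdot,y)\|_{L^q(M)}<\infty$ throughout the stated range of $q$, finishing the proof.

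The main obstacle is the verification of the pointwise bounds for $G$ on a closed manifold: one has to adapt the proof of Theorem~\ref{green function uniform} while respecting the compatibility condition $\int_M \Delta_y G(x,y)\,dV(y)=0$ forced by the absence of boundary. A secondary technical point is the approximation step, for which the key input is that a quasi-subharmonic function is locally the sum of a subharmonic function and a $C^2$ function, so that mollification produces a smooth QSH sequence whose Laplacians converge to $\Delta f$ weakly and whose total masses $|\Delta f_k|(M)$ converge to $|\Delta f|(M)$, which is exactly what is needed to pass Minkowski's inequality to the limit.
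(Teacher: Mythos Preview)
Your plan matches the paper's: establish the Green--Riesz identity $f-f_{\ave}=\int_M G(\cdot,y)\,d(\Delta f)(y)$ for $f\in\operatorname{QSH}(M)$ (this is exactly Theorem~\ref{thm:green riesz1}), then bound the $L^q$ norm via Minkowski using $\sup_y\|G(\cdot,y)\|_{L^q}<\infty$ in the stated range (cf.\ Lemma~\ref{lem:laplace 1}). Two simplifications are available. First, the pointwise bounds you propose to derive for $G$ on a closed manifold are already recorded as Lemma~\ref{green function:compact}, quoted from Aubin; no adaptation of Theorem~\ref{green function uniform} is required in the boundaryless case. Second, the paper obtains the representation formula not by approximation but by a one-line application of Weyl's lemma: since $\Delta_x^{\distr}\bigl(f(x)-\int_M G(x,y)\,\Delta f(y)\bigr)=0$, the difference is harmonic on a closed manifold, hence equal to the constant $f_{\ave}$. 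This bypasses the mollification step entirely. Your approximation route can also be made to work, but note that on a closed manifold there is no \emph{global} decomposition $f=f_1+f_2$ with $f_1$ subharmonic (nonconstant subharmonic functions do not exist there), so the chart-by-chart mollification glued by a partition of unity and the claimed total-variation convergence $|\Delta f_k|(M)\to|\Delta f|(M)$ require more justification than you indicate; and once the representation formula is in hand, Minkowski applies directly to the measure integral, so the limiting argument for Minkowski is superfluous anyway.
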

     When $\Omega\subset\mr^n$ is a bounded open subset, Theorem \ref{thm:subharmonic boundary} is proved by H\"ormander, 
    using a completely different method (\cite[Theorem 3.1]{Hor67}). 
    
    Similar to the proof of Theorem \ref{thm:laplace boundary}, 
     the above three theorems are derived from the following Green-Riesz type formulas (Theorem \ref{thm:green riesz} and \ref{thm:green riesz1}), where for Theorem \ref{thm:subharmonic boundary}, one needs to note the Green function is always non-positive,  so the first term in the right hand side of Equality (\ref{equ:green-riesz}) can be dropped.
    \begin{comment}
    \indent In Theorem \ref{thm:subharmonic1}, if we take $\Omega\subset\mc$ to be a bounded open subset, and  $f:=\log|u|$ for some $u$ which is holomorphic in a neighborhood of $\overline \Omega$, then $\frac{1}{2\pi}\int_{\Omega}\Delta f$ computes the number of zeroes of $u$ in $\Omega$, so it seems that Theorem \ref{thm:subharmonic1} has some relations with the value distribution theory. In fact, the usual Green-Riesz formula 
    (see \cite[Proposition 4.22, Chapter I]{Dem} or see \cite[Theorem 3.3.6]{Hor94}) for  subharmonic functions on bounded open subsets of $\mr^n$ can be also generalized to Riemannian manifolds, which is a generalization of Jensen's formula. Since it will involve more properties of subharmonic functions, we only state it and not give its proof.
    \end{comment}
    \begin{thm}\label{thm:green riesz}
    Let $(M,g)$ be a Riemannian manifold without boundary, $\Omega\subset\subset M$ be an open subset
    with smooth boundary, and let $G$ be the Green function for the Laplace operator on $\overline\Omega$. Then for any function $f\in \operatorname{QSH}(M)$, we have
    \begin{equation}\label{equ:green-riesz}
    f(x)=\int_{\Omega}G(x,y)\Delta f(y)+\int_{\partial \Omega}f(y)\frac{\partial G(x,y)}{\partial\nve_y}dS(y),\ \forall x\in \Omega.
    \end{equation}
    \end{thm}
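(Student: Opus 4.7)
The plan is to prove the identity in two stages: first for $C^2$ functions, where it reduces to the classical Green's representation formula, then for subharmonic functions via smooth approximation. The general $\operatorname{QSH}$ case follows by linearity of (\ref{equ:green-riesz}) in $f$, after writing $f = f_1 + f_2$ with $f_1 \in \sh$ and $f_2 \in C^2$ in a neighborhood of $\overline\Omega$ (globalized from the local decomposition by a partition of unity).

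For $f \in C^2$, the plan is to apply Green's second identity to $f$ and $G(x,\cdot)$ on $\Omega \setminus \overline{B(x,\epsilon)}$ and send $\epsilon \to 0$. The singularity of $G$ at $x$, encoded in the standard asymptotic $G(x,y) \sim c_n d(x,y)^{2-n}$ for $n \geq 3$ (logarithmic for $n=2$), forces the boundary contribution along $\partial B(x,\epsilon)$ to converge to $f(x)$, while the remaining terms produce the two integrals on the right-hand side of (\ref{equ:green-riesz}).

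For $f \in \sh(M)$, fix an open set $U$ with $\overline\Omega \subset U \subset\subset M$ and construct smooth subharmonic approximants $f_\epsilon \downarrow f$ on $U$ (locally via Euclidean convolution in coordinate charts, patched by a partition of unity, absorbing the mild metric error into a quasi-subharmonic correction). Applying the $C^2$ case to each $f_\epsilon$,
\begin{equation*}
f_\epsilon(x) = \int_\Omega G(x,y)\,\Delta f_\epsilon(y)\,dV(y) + \int_{\partial\Omega} f_\epsilon(y)\,\frac{\partial G(x,y)}{\partial \nve_y}\,dS(y).
\end{equation*}
The LHS converges to $f(x)$ by monotonicity. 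The boundary integral converges by dominated convergence, once one checks that $f|_{\partial\Omega} \in L^1(\partial\Omega)$ (Fubini in tubular-neighborhood coordinates combined with the $L^1_{\loc}$ property of subharmonic functions) and that $\partial G/\partial\nve_y$ is continuous and bounded on $\{x\}\times\partial\Omega$.

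The main obstacle is the volume integral: the measures $\Delta f_\epsilon\,dV$ converge weakly to $\Delta f$ on $\overline\Omega$ with uniformly bounded total variation, but $G(x,\cdot)$ has a singularity at $y = x$, so weak convergence alone is insufficient. The plan is to split at a small geodesic ball $B(x,\delta)$. On $\Omega \setminus \overline{B(x,\delta)}$ the kernel is continuous and bounded, so weak convergence passes for all but countably many $\delta$. On $B(x,\delta)$, use the uniform estimate $|G(x,y)| \leq C d(x,y)^{2-n}$ (respectively logarithmic when $n=2$) from Theorem \ref{green function uniform}, together with a uniform mass estimate $(\Delta f_\epsilon)(B(x,\delta)) \leq (\Delta f)(B(x,2\delta)) + o(1)$, to show that the $B(x,\delta)$ contribution is $o(1)$ as $\delta \to 0$ uniformly in $\epsilon$. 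The integrability of $d(x,\cdot)^{2-n}$ against $\Delta f$ near $x$ is precisely equivalent to the finiteness of the Newtonian potential there, which holds whenever $f(x) > -\infty$; in the remaining case $f(x) = -\infty$ both sides of (\ref{equ:green-riesz}) evaluate to $-\infty$ for the same reason. Sending $\delta \to 0$ after $\epsilon \to 0$ completes the passage to the limit and yields (\ref{equ:green-riesz}).
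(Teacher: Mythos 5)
Your proposal follows a genuinely different route than the paper: you propose to approximate $f$ by smooth subharmonic functions $f_\epsilon \downarrow f$ and pass to the limit in the Green representation formula, whereas the paper avoids any approximation entirely. The paper's argument is: since $\Delta_x^{\distr}\bigl(f(x)-\int_\Omega\Gamma(x,y)\Delta f(y)\bigr)=0$ for a local fundamental solution $\Gamma$, Weyl's lemma gives a harmonic function $h$ with $f(x)=\int_\Omega\Gamma(x,y)\Delta f(y)+h(x)$ almost everywhere, and the pointwise determinacy of subharmonic functions (their value at each point is the limit of their spherical means) upgrades this to an identity at every $x$. Writing $\Gamma=G+\Phi$ with $\Phi$ the harmonic correction isolates the Green potential, and the boundary term is just the Poisson representation of the residual harmonic function $l$. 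That argument requires no limits and no control of $\Delta f_\epsilon$ near the diagonal.

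The substantive gap in your approach is the first sentence of the $\sh$-case: the existence of smooth subharmonic approximants $f_\epsilon \downarrow f$ on a Riemannian manifold is not a routine fact. Euclidean convolution in a chart produces a function that is subharmonic for the \emph{Euclidean} Laplacian of that chart, not for the Riemannian one, and patching by a partition of unity destroys subharmonicity on overlaps; the resulting function is only quasi-subharmonic. If you then add a small strictly subharmonic correction to repair this, you lose the monotone decrease $f_\epsilon\downarrow f$ that you invoke for the boundary integral and for the pointwise limit $f_\epsilon(x)\to f(x)$. Making all of this rigorous is a theorem in its own right (the present authors reference Bonfiglioli--Lanconelli for closely related statements and apparently abandoned a draft built on exactly this approximation lemma). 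Once that lemma is granted, your splitting argument at $B(x,\delta)$ and the $f(x)=-\infty$ discussion are plausible, though you would also need to justify $f|_{\partial\Omega}\in L^1(\partial\Omega,\,\partial G/\partial\nve\,dS)$ independently rather than appealing loosely to $L^1_{\loc}$, and the uniform weighted mass bound $\sup_\epsilon\int_{B(x,\delta)}|G(x,y)|\,\Delta f_\epsilon(y)\,dV(y)\to 0$ needs more than the unweighted estimate $(\Delta f_\epsilon)(B(x,\delta))\le(\Delta f)(B(x,2\delta))+o(1)$, since the weight is singular at $y=x$. None of these are insurmountable, but each requires real work that the paper's Weyl-lemma route sidesteps.
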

    \begin{thm}\label{thm:green riesz1}
    Let $(M,g)$ be a compact Riemannian manifold without boundary, and let $G$ be the Green function of $M$. Then for any function $f\in \operatorname{QSH}(M)$, we have
    \begin{equation*}\label{equ:green-riesz1}
    f(x)=f_{\ave}+\int_{M}G(x,y)\Delta f(y),\ \forall x\in M.
    \end{equation*}
    \end{thm}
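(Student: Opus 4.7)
My plan is to reduce to the case of smooth $f$ and then approximate. For $f\in C^2(M)$, the identity follows immediately from the defining properties of the Green function on a compact boundaryless manifold, namely $\Delta_y G(x,y) = \delta_x - \frac{1}{|M|}$ (where $|M|$ denotes the Riemannian volume) together with the normalization $\int_M G(x,y)\,dV(y) = 0$. Since $\partial M = \emptyset$, integration by parts produces no boundary terms, and gives
\[
\int_M G(x,y)\,\Delta f(y)\,dV(y) = \int_M f(y)\,\Delta_y G(x,y)\,dV(y) = f(x) - f_{\ave},
\]
which is the claimed formula in the smooth case.

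For general $f\in\operatorname{QSH}(M)$, I would construct a sequence $f_\epsilon\in C^\infty(M)$ with $f_\epsilon\searrow f$ pointwise as $\epsilon\searrow 0$. This can be achieved by mollifying the subharmonic piece in each local chart (using the local decomposition $f=f_1+f_2$ with $f_1$ subharmonic and $f_2$ smooth) and patching with a partition of unity, much as in the standard regularization of subharmonic functions. Applying the smooth identity just proved to each $f_\epsilon$ produces
\[
f_\epsilon(x) = (f_\epsilon)_{\ave} + \int_M G(x,y)\,\Delta f_\epsilon(y)\,dV(y),
\]
and the remaining task is to pass to the limit $\epsilon\to 0$; monotone convergence takes care of both $f_\epsilon(x)\to f(x)$ (understood as an equality in $[-\infty,\infty)$) and $(f_\epsilon)_{\ave}\to f_{\ave}$, since $f\in L^1(M)$.

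The only substantial step is the convergence of the Green integral. Fixing $x\in M$ with $f(x)>-\infty$, I would split the integration domain into a small geodesic ball $B(x,\rho)$ and its complement. On $M\setminus B(x,\rho)$ the kernel $G(x,\cdot)$ is smooth, so the weak convergence $\Delta f_\epsilon\rightharpoonup\Delta f$ of signed Radon measures immediately yields convergence of the corresponding integrals. On $B(x,\rho)$, the uniform estimate $|G(x,y)|\leq C d(x,y)^{2-n}$ (and its logarithmic analogue for $n=2$) provided by Theorem \ref{green function uniform} reduces the matter to controlling $\int_{B(x,\rho)} d(x,y)^{2-n}\,d|\Delta f_\epsilon|(y)$ uniformly in $\epsilon$. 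I expect this near-diagonal control to be the main obstacle: the plan is to invoke the local Riesz representation of $f_\epsilon$ as a Newton potential plus a smooth harmonic correction, which (since $f(x)>-\infty$) shows that these integrals remain uniformly bounded and tend to $0$ as $\rho\to 0$. The exceptional case $f(x)=-\infty$ is treated separately, by verifying that both sides of the identity equal $-\infty$, using that the polar set of $f$ is precisely the set where the associated Newton-type potential diverges.
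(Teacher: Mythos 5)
Your approach is genuinely different from the paper's, and the smooth case and the reduction to that case are fine as far as they go; but the limiting step contains a real gap that the paper sidesteps entirely.

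The paper does not mollify $f$ at all. Instead it computes the distributional Laplacian of $x\mapsto f(x)-\int_M G(x,y)\,\Delta f(y)$ directly (using Fubini against $\Delta f$, which is justified since $\sup_x\int_M|G(x,y)|\,dV(y)<\infty$ and $|\Delta f|(M)<\infty$), finds it is zero, invokes Weyl's lemma to conclude the difference is a.e.\ equal to a harmonic (hence constant) function, upgrades the a.e.\ equality to everywhere using that quasi-subharmonic functions are determined pointwise by their a.e.\ values, and finally integrates both sides against $dV$ and uses $\int_M G(x,y)\,dV(y)=0$ to identify the constant with $f_{\ave}$. This route needs no regularization sequence $f_\epsilon$ and therefore no control of $\int_{B(x,\rho)}d(x,y)^{2-n}\,d|\Delta f_\epsilon|(y)$.

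The gap in your argument is precisely that near-diagonal control, which you yourself flag as ``the main obstacle.'' Weak-$*$ convergence $\Delta f_\epsilon\rightharpoonup\Delta f$ alone does not give convergence of the potentials $\int_M G(x,y)\,d\Delta f_\epsilon(y)$, because the kernel $G(x,\cdot)$ is unbounded: in general for positive measures one only gets lower semicontinuity, $\liminf_\epsilon\int d(x,y)^{2-n}\,d|\Delta f_\epsilon|(y)\geq\int d(x,y)^{2-n}\,d|\Delta f|(y)$, and a uniform smallness estimate on $B(x,\rho)$ is exactly the missing equi-integrability. Your appeal to ``the local Riesz representation of $f_\epsilon$'' does not supply it: that representation reduces the question to showing the harmonic parts $h_\epsilon$ converge and that the local Newtonian potentials of $\Delta f_\epsilon$ converge at $x$, which is essentially the same convergence problem in a chart, and in any case is dangerously close to assuming a Riesz-type decomposition (the very kind of statement being proved). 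Filling this gap honestly would require a potential-theoretic argument that the paper simply avoids; if you want to keep the mollification route you would need to produce, for each fixed $x$ with $f(x)>-\infty$, a concrete equi-integrability bound, for instance by writing $G(x,\cdot)=\max\{G(x,\cdot),-t\}+\min\{G(x,\cdot)+t,0\}$, using weak convergence for the truncated (continuous) part and a separate monotonicity argument for the tail --- none of which is in your sketch.
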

    % Obviously, Theorem \ref{thm:green riesz} is a generalization of Jensen's formula, so it has some relations with the value distribution theory.
     When $M=\mr^n$ and $\Omega$ is a bounded domain, Theorem \ref{thm:green riesz} is well known, see \cite[Proposition 4.22, Chapter I]{Dem} for example. 
     However, we cannot find a precise literature for such a general formula on manifolds.
     The proofs of Theorem \ref{thm:green riesz} and \ref{thm:green riesz1} are given in Section \ref{section:green riesz1}.\\
     \indent One may also consider the corresponding (quasi-)subharmonic functions 
    for a general second order uniformly elliptic operator. Similar results of Theorem \ref{thm:subharmonic1}, \ref{thm:subharmonic boundary}
    and  \ref{thm:subharmonic2} also hold. \\

    Let us turn to some applications of Green functions to complex analysis. Let $(M,\omega)$ be a K\"ahler manifold.
    A function $f\colon M\rw [-\infty,\infty)$ is \emph{quasi-plurisubharmonic} 
    if it can be locally represented as a sum $f_1+f_2$ with $f_1$ plurisubharmonic and $f_2$ is of class $C^2$.
    %The space of all quasi-plurisubharmonic functions on $M$ is denoted by $\operatorname{QPSH}(M)$. 
    Since each plurisubharmonic function on an open subset of $M$ is also subharmonic (by using normal coordinates), 
    so Theorem \ref{thm:subharmonic1}, \ref{thm:subharmonic boundary}
    and \ref{thm:subharmonic2} are still valid for quasi-plurisubharmonic functions on $M$.
     
 %   \begin{cor}\label{thm:quasiplurisubharmonic}
%    Let $(M,\omega)$ be a K\"ahler manifold (of complex dimension $n$), and let $\Omega\subset\subset M$ be an open subset with smooth boundary.
%     Assume  $p,q$ satisfy 
%    $$ 1\leq p<\infty,\ 1\leq q<\frac{2np}{2n-1},$$
%    and if $n=1$, we moreover require  $q=1$. Then there is a constant $\delta:=\delta(\Omega,M,\omega,p,q)>0$ such that  
%    $$\delta\|f\|_{L^q(\Omega)}\leq \int_{\Omega}|\Delta f|+\|f\|_{L^p(\partial\Omega)},\ \forall\ f\in \operatorname{QPSH}(M).$$
%    \end{cor}
%    \begin{cor}\label{thm:plurisubharmonic boundary}
%    Let $(M,\omega)$ be a K\"ahler manifold (of complex dimension $n$), $\Omega\subset\subset M$ be an open subset with smooth boundary, and let $p,q$ satisfy
%    $$
%         1\leq p<\infty,\ 1\leq q<\frac{2np}{2n-1}.
%    $$
%    Then there is a constant $\delta:=\delta(\Omega,M,\omega,p,q)>0$ such that
%    $$
%    \delta\|f\|_{L^q(\Omega)}\leq \|f\|_{L^p(\partial \Omega)}$$
%    for all nonnegative function $f\in \operatorname{PSH}(M)$.
%    \end{cor}
%     \begin{cor}\label{thm:plurisubharmonic3}
%    Let $(M,\omega)$ be a compact K\"ahler manifold (of complex dimension $n$).   Assume $p,q$ satisfy 
%    $$ 1\leq p<\infty,\ 1\leq q<\frac{2np}{2n-1},$$
%    and if $n=1$, we moreover require $q=1$. Then there is a constant $\delta:=\delta(M,\omega,p,q)>0$ such that 
%    $$\delta\|f-f_{\ave}\|_{L^q(M)}\leq \int_{M}|\Delta f|,\ \forall\ f\in \operatorname{QPSH}(M).$$
%    \end{cor}
    \indent A Hermitian manifold $(M,h)$ is called balanced if we have 
    $$\Delta_d f=2\Delta_{\partial}f=2\Delta_{\bar\partial }f,\ \forall f\in C^2(M).$$
     Clearly, every K\"ahler manifold is \emph{balanced} by the K\"ahler identities,
    see \cite[Corollary 6.5, Chapter VI]{Dem}, but the converse is not true in general. By a simple observation, we can derive the following integral representation,
    which plays a fundamental role in the proof of the corresponding $\bar\partial$-version of the above inequalities.
    
     \begin{thm}\label{thm:bochner martinella}
    Let $(M,h)$ be a balanced manifold, $\Omega\subset\subset M$ be an open set with  smooth boundary, and let $G$ be the Green 
    function for the Laplace operator on $\overline\Omega$. Then we have 
    $$f(z)=-2\int_{\Omega}\langle \bar\partial f(w),\bar\partial_w G(z,w)\rangle dV(w)+\int_{\partial \Omega}\frac{\partial G(z,w)}{\partial\nve_w}f(w)dS(w)$$
    for all $f\in C^0(\overline\Omega)\cap C^1(\Omega)$ such that $\bar\partial f$ has a continuous continuation to the boundary.
    \end{thm}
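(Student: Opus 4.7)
The plan is to derive the identity from the Green–Riesz representation (Theorem \ref{thm:green riesz}) by rewriting the Laplacian via the balanced identity and then integrating by parts against $\bar\partial$. First treat the smooth case $f\in C^2(\overline\Omega)$: since such an $f$ is quasi-subharmonic, Theorem \ref{thm:green riesz} gives
\begin{equation*}
f(z)=\int_\Omega G(z,w)\Delta f(w)\,dV(w)+\int_{\partial\Omega}f(w)\frac{\partial G(z,w)}{\partial\nve_w}\,dS(w),
\end{equation*}
whose boundary term already matches the claim. The balanced identity $\Delta_d=2\Delta_{\bar\partial}$, applied to the scalar $f$ (for which $\Delta_{\bar\partial}f=\bar\partial^{\ast}\bar\partial f$ since $\bar\partial^{\ast}f=0$) and combined with the sign convention that makes $\Delta$ the analyst's Laplacian (so subharmonic $\Leftrightarrow \Delta f\ge 0$), yields $\Delta f=-2\bar\partial^{\ast}\bar\partial f$. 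Fix $z\in\Omega$ and for small $\varepsilon>0$ set $\Omega_\varepsilon:=\Omega\setminus\overline{B(z,\varepsilon)}$, on which $G(z,\cdot)$ is smooth. Stokes' theorem applied to the formal $\bar\partial$-adjoint pairing gives
\begin{equation*}
\int_{\Omega_\varepsilon}G(z,w)\,\bar\partial^{\ast}\bar\partial f(w)\,dV(w)=\int_{\Omega_\varepsilon}\langle\bar\partial f(w),\bar\partial_wG(z,w)\rangle\,dV(w)+R_\varepsilon,
\end{equation*}
where the boundary remainder $R_\varepsilon$ is supported on $\partial B(z,\varepsilon)$; the $\partial\Omega$-contribution vanishes because $G(z,\cdot)\equiv 0$ there. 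Using the pointwise bounds of Theorem \ref{green function uniform}, $|R_\varepsilon|\leq C\varepsilon^{n-1}\sup_{\overline\Omega}|\bar\partial f|\cdot\sup_{\partial B(z,\varepsilon)}|G|\to 0$, while the estimate $|\bar\partial_wG(z,w)|\le Cd(z,w)^{1-n}$ makes the interior integrand integrable, so the interior integrals converge as $\varepsilon\to 0$. Multiplying by $-2$ gives the desired formula in the $C^2$ case.

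For general $f\in C^0(\overline\Omega)\cap C^1(\Omega)$ with $\bar\partial f$ extending continuously to $\overline\Omega$, approximate $f$ by $f_k\in C^2(\overline\Omega)$ with $f_k\to f$ and $\bar\partial f_k\to\bar\partial f$ uniformly on $\overline\Omega$. Such a sequence can be produced by extending $f$ a little across $\partial\Omega$ using a collar neighborhood of the smooth boundary, then mollifying in local charts and patching with a partition of unity. Passing to the limit in the identity for $f_k$: the pointwise values $f_k(z)\to f(z)$; the boundary integral converges by uniform convergence on $\partial\Omega$ together with $\partial G(z,\cdot)/\partial\nve\in L^1(\partial\Omega)$; and the volume integral converges by dominated convergence, using $Cd(z,w)^{1-n}\sup_k\|\bar\partial f_k\|_{L^\infty(\overline\Omega)}$ as integrable majorant. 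This yields the formula at the stated regularity.

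The delicate point is the integration-by-parts step: one must check that the inner boundary contribution on $\partial B(z,\varepsilon)$ really vanishes in the limit (which relies on the sharp pointwise bounds of Theorem \ref{green function uniform}) and that the sesquilinear Hermitian conventions line up so that the formal adjoint pairing produces precisely $\langle\bar\partial f,\bar\partial_wG\rangle$ with the factor $-2$, rather than its complex conjugate or a real-part variant. Once this is in place, the extension from $C^2$ to the stated regularity is a routine density argument built on the uniform Green-function estimates already established.
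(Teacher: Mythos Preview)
Your proposal is correct and follows essentially the same route as the paper: reduce to $f\in C^2(\overline\Omega)$ by approximation, apply the Green representation, rewrite $\Delta f=-2\bar\partial^{*}\bar\partial f$ via the balanced identity, and integrate by parts on $\Omega\setminus\overline{B(z,\varepsilon)}$, killing the inner-sphere term with the estimates of Theorem~\ref{green function uniform}. The only cosmetic difference is that you cite Theorem~\ref{thm:green riesz} for the initial representation while the paper uses Lemma~\ref{green function:noncompact} directly; for $C^2$ functions these coincide.
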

    
    Theorem \ref{thm:bochner martinella} gives an integral representation for $C^1$ functions on any balanced manifolds (not necessarily Stein or K\"ahler). 
    It seems that integral representations for functions are mainly considered only on Stein manifolds in the literatures,
    for example, the integral formula discussed in \cite[Theorem 4.3.3]{Henkin},
     whose proof strongly relies on embedding of Stein manifolds into some $\mathbb{C}^n$ and some fundamental results about coherent analytic sheaves on Stein manifolds.
     The approach we adopt here is a direct application of the Green representation formula.
     
   % An interesting problem is to consider the corresponding integral representation of differential forms on a K\"ahler manifold with the help of Green operators, 
%    which will be studied in furture.\\

    A consequence of Theorem \ref{thm:bochner martinella} is the following $\bar\partial$-version of Sobolev-type inequality, which generalizes \cite[Theorem 1.4]{DJQ}.
    \begin{thm}\label{thm: dbar boundary}
    Let $(M,h)$ be a balanced manifold of complex dimension $n$, $\Omega\subset\subset M$ be an open subset
    with smooth boundary, and let $p,q,r$ satisfy
    $$
         1\leq p,q,r<\infty,\ q(2n-p)<2np,\ q(2n-1)<2nr.
    $$
    There is a constant $\delta:=\delta(\Omega,M,h,p,q,r)>0$ such that
    $$
    \delta\|f\|_{L^q(\Omega)}\leq \|\bar\partial f\|_{L^p(\Omega)}+\|f\|_{L^r(\partial \Omega)}$$
    for all $f\in C^0(\overline\Omega)\cap C^1(\Omega)$ such that $\bar\partial f$ has a continuous continuation to the boundary.
    \end{thm}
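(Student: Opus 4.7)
The plan is to reduce the claimed inequality, via the integral representation in Theorem \ref{thm:bochner martinella}, to two potential-type $L^p\to L^q$ bounds, and then to conclude with Hardy--Littlewood--Sobolev-type estimates. The argument parallels the proofs of Theorems \ref{thm:graident boundary} and \ref{thm:laplace boundary}, with $\nabla$ or $\Delta$ replaced by $\bar\partial$.

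Viewing $(\overline M,h)$ as a real Riemannian manifold of dimension $2n$, Theorem \ref{thm:bochner martinella} yields, for every $z\in\Omega$,
\begin{equation*}
|f(z)|\le 2\int_{\Omega}|\bar\partial f(w)|\,|\bar\partial_w G(z,w)|\,dV(w)+\int_{\partial\Omega}\Bigl|\frac{\partial G(z,w)}{\partial\nve_w}\Bigr|\,|f(w)|\,dS(w).
\end{equation*}
Since $|\bar\partial_w G|\le|\nabla_w G|$ and $|\partial G(z,w)/\partial\nve_w|\le|\nabla_w G(z,w)|$, Theorem \ref{green function uniform} (applied with real dimension $2n$) gives
\begin{equation*}
|f(z)|\le C\bigl(I_1(z)+I_2(z)\bigr),
\end{equation*}
where
\begin{equation*}
I_1(z):=\int_{\Omega}|\bar\partial f(w)|\,d(z,w)^{1-2n}\,dV(w),\qquad I_2(z):=\int_{\partial\Omega}d(z,w)^{1-2n}|f(w)|\,dS(w),
\end{equation*}
and $C=C(\Omega,M,h)$. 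For $n=1$ the estimate for $|\nabla G|$ comes with an extra factor $(1+|\ln d(z,w)|)$, which can be absorbed by trading the exponent $-1$ for $-1-\varepsilon$ with $\varepsilon>0$ small enough that the hypotheses remain strictly subcritical; the remainder of the argument then applies without change.

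It then suffices to prove (a) $\|I_1\|_{L^q(\Omega)}\le C\|\bar\partial f\|_{L^p(\Omega)}$ under $q(2n-p)<2np$, and (b) $\|I_2\|_{L^q(\Omega)}\le C\|f\|_{L^r(\partial\Omega)}$ under $q(2n-1)<2nr$. Estimate (a) is the strict subcritical Hardy--Littlewood--Sobolev inequality for a Riesz kernel of order $1$ on the compact $2n$-dimensional manifold $\overline\Omega$; the range $p\ge 2n$, where the condition is automatic, is handled by H\"older's inequality, since then $d(\cdot,w)^{1-2n}$ lies in $L^{p'}(\Omega)$ uniformly in $w$. Estimate (b) is the main technical point: a scaling argument after straightening the boundary in a finite atlas identifies $q(2n-1)=2nr$ as the critical exponent for the mixed-dimensional operator $g\mapsto\int_{\partial\Omega}d(\cdot,w)^{1-2n}g(w)\,dS(w)$ from $L^r(\partial\Omega)$ to $L^q(\Omega)$, since the kernel has order $1-2n$ which is borderline non-integrable along the $(2n-1)$-dimensional boundary but gains one dimension in the interior integration. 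Subcriticality $q(2n-1)<2nr$ then gives the claimed bound. This is the same mixed-dimensional inequality that appears in the proofs of Theorems \ref{thm:graident boundary} and \ref{thm:laplace boundary} (where the boundary kernel has the same homogeneity $1-2n$), and can be invoked directly from there. Combining (a) and (b) and setting $\delta:=1/C$ produces the stated inequality.
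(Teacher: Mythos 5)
Your approach is correct and is essentially the route the paper itself indicates: apply the integral representation of Theorem \ref{thm:bochner martinella}, dominate $|\bar\partial_w G(z,w)|$ and $|\partial G(z,w)/\partial\nve_w|$ by $|\nabla_w G(z,w)|$, and then invoke the same potential estimates (in real dimension $2n$) that underlie Theorems \ref{thm:graident boundary} and \ref{thm:laplace boundary}. For step (b) you correctly note that this is literally Lemma \ref{lem:basic lemma 2}(ii); for step (a), the paper obtains its version of HLS by the explicit three-factor H\"older computation of Lemma \ref{lem:basic lemma 2}(i) rather than citing HLS as a black box, which has the advantage of covering $p=1$ (standard HLS requires $p>1$; the $p=1$ subcritical case instead follows from a Schur/Minkowski estimate, or from the paper's limiting argument) and $p=2n$ directly. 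Your stated justification for the range $p\ge 2n$ is slightly off at the endpoint $p=2n$, since then $p'=2n/(2n-1)$ and $d(\cdot,w)^{1-2n}$ is exactly non-integrable to the power $p'$; one should instead first apply the subcritical bound at some $p_0\in(1,2n)$ and then H\"older's inequality on the bounded domain to pass from $L^{p_0}$ to $L^p$, or simply invoke Lemma \ref{lem:basic lemma 2}(i), which treats that range uniformly. These are minor repairs and do not affect the overall structure of the argument.
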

 %   When $p=q=r=2,$ $M=\mc^n$ and $\Omega$ is a strictly pseudoconvex open subset, the result of Theorem \ref{thm: dbar boundary} is already known (e.g. see the proof of \cite[Proposition 2.2.2]{Ber95}). The general case is beyond the discussion in \cite{Ber95} since the strict pseudoconvexity condition is important for the arguments there.
%     When $M$ is a Stein manifold, Theorem \ref{thm: dbar boundary} follows from the Bochner-Martinella formula on Stein manifolds
%     (see \cite[Theorem 4.3.3]{Henkin}) by using the same strategy as in \cite{DJQ}. 

We omit the proof of Theorem \ref{thm: dbar boundary} here, 
since, with Theorem \ref{thm:bochner martinella} in hand,  the argument of it is parallel that in the proof of Theorem \ref{thm:graident boundary} and \ref{thm:laplace boundary}. 
Let us record the following special case of Theorem \ref{thm: dbar boundary}, which we think has some independent interests. 
The last author has used its domain version as an important tool to get the curvature strict positivity of direct image bundles (see the proof of \cite[Theorem 1.4]{qin}).
    \begin{cor}\label{cor:dbar poincare}
    Let $(M,h)$ be a balanced manifold of complex dimension $n$,  $\Omega\subset\subset M$ be an open subset
    with smooth boundary, and let $p,q$ satisfy
    $$
         1\leq p<\infty,\ 1\leq q<\infty,\ q(2n-1)<2np.
    $$
     Then there is a constant $\delta:=\delta(M,\Omega,h,p,q)>0$ such that
    $$
    \delta\|f\|_{L^q(\Omega)}\leq \|f\|_{L^p(\partial \Omega)},\ \forall\ f\in C^0(\overline\Omega)\cap \mo(\Omega),$$
    where $\mo(\Omega)$ denotes the space of holomorphic functions on $\Omega$.
    \end{cor}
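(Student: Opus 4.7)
The plan is to derive Corollary \ref{cor:dbar poincare} as a direct specialization of Theorem \ref{thm: dbar boundary}, which the author has already stated (and indicated a proof strategy for) earlier in the introduction. Since a holomorphic function has $\bar\partial f \equiv 0$, the Sobolev-type inequality of that theorem collapses to the pure boundary-control inequality claimed here.

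First I would verify that the parameters of the corollary fit the hypotheses of Theorem \ref{thm: dbar boundary} with the choice $r := p$. That theorem requires
$$
1\leq p,q,r<\infty,\qquad q(2n-p)<2np,\qquad q(2n-1)<2nr.
$$
With $r=p$, the third condition becomes exactly the assumption $q(2n-1)<2np$ of the corollary. For the second, note that $p\geq 1$ gives $2n-p\leq 2n-1$, hence
$$
q(2n-p)\;\leq\; q(2n-1)\;<\;2np,
$$
so the second condition is automatic. Thus the parameter triple $(p,q,r)=(p,q,p)$ is admissible for Theorem \ref{thm: dbar boundary}.

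Next, I would check that the regularity hypotheses match. Any $f\in C^0(\overline\Omega)\cap\mo(\Omega)$ is smooth on $\Omega$, so in particular lies in $C^0(\overline\Omega)\cap C^1(\Omega)$, and its $\bar\partial f$ vanishes identically on $\Omega$, which trivially extends continuously to $\overline\Omega$. Therefore Theorem \ref{thm: dbar boundary} applies and yields
$$
\delta\|f\|_{L^q(\Omega)}\;\leq\;\|\bar\partial f\|_{L^p(\Omega)}+\|f\|_{L^p(\partial\Omega)}\;=\;\|f\|_{L^p(\partial\Omega)},
$$
which is precisely the claimed inequality.

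There is essentially no obstacle at this stage beyond the two-line parameter bookkeeping above; all the analytic content, namely the $L^p$--$L^q$ mapping properties of the kernels $G(z,w)$ and $\partial G(z,w)/\partial\nve_w$ arising from Theorem \ref{thm:bochner martinella}, has already been absorbed into Theorem \ref{thm: dbar boundary}. If one wished to give a self-contained argument without quoting that theorem, the cleanest route would be to start directly from Theorem \ref{thm:bochner martinella}: for holomorphic $f$ the volume integral drops, leaving
$$
f(z)=\int_{\partial\Omega}\frac{\partial G(z,w)}{\partial\nve_w}\,f(w)\,dS(w),
$$
and then the uniform bound $|\nabla_w G(z,w)|\leq C\,d(z,w)^{1-2n}$ from Theorem \ref{green function uniform} combined with Young's/Hölder's inequality (exactly as in the proof of Theorem \ref{thm:graident boundary}) converts the boundary $L^p$ norm into an interior $L^q$ norm under the condition $q(2n-1)<2np$.
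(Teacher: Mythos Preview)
Your proof is correct. The paper's own justification is slightly different: it observes that on a balanced manifold holomorphic functions are harmonic (since $-\Delta f=2\Delta_{\bar\partial}f=2\bar\partial^*\bar\partial f=0$), and then invokes Corollary~\ref{cor:laplace boundary} for harmonic functions on the underlying Riemannian manifold of real dimension $2n$, where the condition $q(n-1)<np$ becomes exactly $q(2n-1)<2np$. Your route instead specializes Theorem~\ref{thm: dbar boundary} with $r=p$, using that $\bar\partial f=0$ kills the interior term; the parameter check $q(2n-p)\leq q(2n-1)<2np$ is clean and correct. The two approaches are essentially equivalent---both collapse an integral representation to its boundary term and apply the same kernel estimate---but yours stays entirely on the complex side and avoids the extra observation that holomorphic implies harmonic, while the paper's route has the mild advantage of not needing to verify the auxiliary condition $q(2n-p)<2np$.
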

    \indent Since holomorphic functions are always harmonic, then Corollary \ref{cor:dbar poincare} is a special case of Corollary \ref{cor:laplace boundary}.\\
    \indent  Similar results of Theorem \ref{thm:poincare compact} and \ref{thm:laplace compact1} also hold for the $\bar\partial$-operator.
     In fact, when the considered functions have compact support, we have the following $\bar\partial$-version of the classical Poincar\'e inequality in PDE. 
     %and Gagliardo-Nirenberg-Sobolev inequality in PDE, see \cite[Theorem 1, Section 5.6]{Evans} for the classical Sobolev inequality.
     \begin{thm}\label{thm:poincare average}
    Let $(M,h)$ be a compact Hermitian manifold of complex dimension $n$, and let $p,q$ satisfy
    $$
         1<p<\infty,\ 1\leq q<\infty,\ q(2n-p)\leq 2np.
    $$
     Then there is a constant $\delta:=\delta(M,h,p,q)>0$ such that
    $$
    \delta\|f-f_{\operatorname{ave}}\|_{L^q(M)}\leq \|\bar\partial f\|_{L^p(M)},\ \forall f\in C^1(M).$$
       Furthermore, when $p>2n$, there is a constant $\delta:=\delta(M,g,p)>0$ such that 
    $$\delta\|f-f_{\operatorname{ave}}\|_{L^\infty(M)}\leq \|\bar\partial f\|_{L^p(M)},\ \forall f\in C^1(M).$$ 
    \end{thm}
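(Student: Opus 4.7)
The plan is to follow the strategy of Theorem \ref{thm:laplace compact1}, with the Hodge Laplacian replaced by the $\bar\partial$-Laplacian $\Delta_{\bar\partial}:=\bar\partial^{*}\bar\partial$ acting on functions. Concretely, we will write $f-f_{\ave}$ as a Riesz-type potential of $\bar\partial f$, and then invoke a Hardy--Littlewood--Sobolev estimate on the compact manifold $M$.

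The first step is to construct a Green kernel for $\Delta_{\bar\partial}$. On a compact Hermitian manifold, $\Delta_{\bar\partial}$ is a formally self-adjoint second-order elliptic operator on $C^\infty(M)$; its principal symbol coincides up to the factor $1/2$ with that of the Riemannian Laplacian, and on our compact connected $M$ its kernel consists precisely of the constants, since any smooth $\bar\partial$-harmonic function is holomorphic and hence constant. Invoking the generalization of Theorem \ref{green function uniform} to general second-order uniformly elliptic operators, which the authors record right after Theorem \ref{thm:laplace compact1}, we obtain a unique Green function $\Gamma(x,y)$, smooth away from the diagonal, with the uniform bounds
$$|\Gamma(x,y)|\leq C\, d(x,y)^{2-2n},\qquad |\nabla_y\Gamma(x,y)|\leq C\, d(x,y)^{1-2n}$$
for $n\geq 2$ (with logarithmic modifications when $n=1$), together with the representation $u(x)-u_{\ave}=\int_M\Gamma(x,y)\,\Delta_{\bar\partial}u(y)\,dV(y)$ valid for every $u\in C^2(M)$.

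Given $\Gamma$, for $f\in C^2(M)$ we integrate by parts on $M\setminus B(x,\varepsilon)$ and let $\varepsilon\to 0$ in the spirit of the proof of Theorem \ref{thm:green riesz1}. Since $\partial M=\emptyset$ and $\Gamma$ has an integrable singularity, this yields the key representation
$$f(x)-f_{\ave}=\int_M \langle\bar\partial f(y),\,\bar\partial_y\Gamma(x,y)\rangle_h\,dV(y),$$
and consequently $|f(x)-f_{\ave}|\leq C\int_M d(x,y)^{1-2n}|\bar\partial f(y)|\,dV(y)$. The kernel $d(x,y)^{1-2n}$ on the real-$2n$-dimensional manifold $M$ is the natural analogue of the Euclidean Riesz kernel of order one, so the Hardy--Littlewood--Sobolev inequality on compact Riemannian manifolds (together with H\"older's inequality on $M$ for subcritical $q$) gives boundedness $L^p(M)\to L^q(M)$ for $1<p<\infty$ in the range $\tfrac{1}{p}-\tfrac{1}{q}\leq\tfrac{1}{2n}$, which is exactly the condition $q(2n-p)\leq 2np$ in the theorem. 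For the $L^\infty$ statement with $p>2n$, we apply H\"older directly in the above representation, using that $\sup_x\int_M d(x,y)^{(1-2n)p'}\,dV(y)<\infty$ precisely when $(2n-1)p'<2n$, i.e.\ $p>2n$. The extension from $C^2(M)$ to $C^1(M)$ is a routine mollification argument. The main technical obstacle is Step 1: rigorously establishing the Green function and its uniform gradient estimates for $\Delta_{\bar\partial}$ on a general compact Hermitian manifold requires adapting the proof of Theorem \ref{green function uniform} to a genuine second-order elliptic operator (not just the Riemannian Laplacian), which the authors assert holds but do not spell out.
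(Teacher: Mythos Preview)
Your approach is correct but takes a genuinely different route from the paper's. The paper does not build a Green function for $\Delta_{\bar\partial}$; instead it reduces Theorem~\ref{thm:poincare average} to the gradient Poincar\'e inequality of Theorem~\ref{thm:poincare compact} via Lemma~\ref{lem:compare}, the Calder\'on--Zygmund bound $\|\partial f\|_{L^p(\mathbb{C}^n)}\leq C\|\bar\partial f\|_{L^p(\mathbb{C}^n)}$ for $f\in C_c^1(\mathbb{C}^n)$ (from Stein), together with a partition of unity. In other words, the paper first recovers control of the full gradient $\nabla f$ from $\bar\partial f$ and then quotes the Riemannian result, whereas you run the Green-representation machinery directly for the operator $\Delta_{\bar\partial}$. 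Your route is closer in spirit to what the paper does in the \emph{balanced} case (Lemma~\ref{lem:average2} and Theorem~\ref{thm:poincare average1}), where $\Delta_{\bar\partial}=-\tfrac12\Delta$ so the Riemannian Green function already serves; extending this to arbitrary Hermitian metrics by using the Green kernel of $\Delta_{\bar\partial}$ itself is a natural idea that avoids the singular-integral black box of Lemma~\ref{lem:compare}. Two minor points: since $M$ is closed, the template you need is the analogue of Lemma~\ref{green function:compact} (not Theorem~\ref{green function uniform}, which is for manifolds with boundary) for a general self-adjoint second-order elliptic operator with one-dimensional kernel; this is standard parametrix theory but is only asserted, not proved, in the paper. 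Also, to reach the critical endpoint $q(2n-p)=2np$ you need the sharp Hardy--Littlewood--Sobolev inequality rather than the strictly subcritical kernel bounds in the style of Lemmas~\ref{lem:gradient 1} and~\ref{lem:basic lemma 2}, so that step should be cited separately.
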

    \begin{comment}
    \begin{thm}\label{thm:dbar sobolev}
    Let $(M,h)$ be a compact Hermitian manifold of complex dimension $n$, and let $p,q$ satisfy
    $$
         1<p<\infty,\ 1\leq q<\infty,\ q(2n-p)\leq 2np.
    $$ 
    Then there is a constant $\delta:=\delta(M,h,p,q)>0$ such that
    $$ \delta\|f\|_{L^q(M)}\leq \|\bar\partial f\|_{L^p(M)},\ \forall f\in C^1(M).$$
    \end{thm}
    \end{comment}
    \indent Using partition of unity, Theorem \ref{thm:poincare average} %(resp. Theorem \ref{thm:dbar sobolev})
     is a consequence of Theorem \ref{thm:poincare compact} %(resp. the classical Sobolev inequality)
     and the following lemma, which follows from Proposition 4 of \cite[Section 1, Chapter III]{Stein} and Fubini's Theorem.
    \begin{lem}\label{lem:compare}
    For any $1<p<\infty$, there is a constant $\delta:=\delta(n,p)>0$ such that 
    $$\delta\|\partial f\|_{L^p(\mc^n)}\leq \|\bar \partial f\|_{L^p(\mc^n)},\ \forall f\in C_c^1(\mc^n).$$ 
    \end{lem}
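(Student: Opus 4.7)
The plan is to reduce, via Fubini's theorem, to the one-dimensional statement
$$\|\partial f/\partial z\|_{L^p(\mc)} \leq C_p\|\partial f/\partial \bar z\|_{L^p(\mc)},\quad f\in C_c^1(\mc),\ 1<p<\infty,$$
and to deduce the latter from the $L^p$-boundedness of the Riesz transforms on $\mr^2$.

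For the planar case, write $z=x_1+ix_2$ and let $\zeta=\xi_1+i\xi_2$ be the Fourier variable. A direct computation yields $\widehat{\partial f/\partial z}(\zeta)=\tfrac{i}{2}\bar\zeta\,\hat f(\zeta)$ and $\widehat{\partial f/\partial\bar z}(\zeta)=\tfrac{i}{2}\zeta\,\hat f(\zeta)$, so $\partial f/\partial z$ arises from $\partial f/\partial\bar z$ via the Fourier multiplier $\bar\zeta/\zeta=(\xi_1-i\xi_2)^2/|\xi|^2$. Expanding the square, this symbol is a complex-linear combination of the symbols $\xi_j\xi_k/|\xi|^2$ of the operators $-R_jR_k$, where $R_j$ denotes the $j$-th Riesz transform on $\mr^2$. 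By Proposition 4 of \cite[Section 1, Chapter III]{Stein}, each $R_j$ is bounded on $L^p(\mr^2)$ for $1<p<\infty$, hence so is this multiplier, which yields the planar inequality.

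For $n\geq 2$, fix $j\in\{1,\dots,n\}$. For each $(z_1,\dots,z_{j-1},z_{j+1},\dots,z_n)\in\mc^{n-1}$ the slice $z_j\mapsto f(z_1,\dots,z_n)$ lies in $C_c^1(\mc)$, so the planar estimate applies slice-wise. Raising to the $p$-th power and integrating over the remaining $n-1$ complex variables via Fubini yields
$$\|\partial f/\partial z_j\|_{L^p(\mc^n)} \leq C_p\,\|\partial f/\partial \bar z_j\|_{L^p(\mc^n)}.$$
Combining this with the pointwise bounds $|\partial f/\partial z_j|\leq|\partial f|$ and $|\partial f/\partial\bar z_j|\leq|\bar\partial f|$ and summing over $j$,
$$\|\partial f\|_{L^p(\mc^n)} \leq \sum_{j=1}^n \|\partial f/\partial z_j\|_{L^p(\mc^n)} \leq C_p\sum_{j=1}^n \|\partial f/\partial\bar z_j\|_{L^p(\mc^n)} \leq nC_p\,\|\bar\partial f\|_{L^p(\mc^n)},$$
so the lemma holds with $\delta=(nC_p)^{-1}$. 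No step is subtle; the only substantive input is the cited $L^p$-boundedness of the Riesz transforms, whose failure at $p=1$ forces the hypothesis $p>1$ in the statement.
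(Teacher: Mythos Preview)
Your argument is correct and is precisely the route the paper indicates: reduce to one complex variable by Fubini, and there recognize $\partial/\partial z$ as the Beurling transform of $\partial/\partial\bar z$, whose $L^p$-boundedness for $1<p<\infty$ follows from Proposition~4 of \cite[Chapter III, \S1]{Stein} on Riesz transforms. One minor slip in the write-up: the first inequality in your final display needs the pointwise bound $|\partial f|\le c_n\sum_j|\partial f/\partial z_j|$ (equivalence of norms on a finite-dimensional space), not the bound $|\partial f/\partial z_j|\le|\partial f|$ you cite; the needed inequality is equally trivial, so this does not affect the argument.
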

     Using Lemma \ref{lem:compare}, it is possible to prove that if $f\in L_{\loc}^p(\mc^n)$ satisfying $|\bar\partial f|\in L_{\loc}^p(\mc^n)$, then $f$ is in the Sobolev space $W^{1,p}_{\loc}(\mc^n)$ ($1<p<\infty$). When $p=2$, such a fact  is known (\cite[Lemma 2.26]{Ada}). For $p\neq 2$, we cannot find an exact literature for this fact, but the proof 
     of it is essentially the same with  the proof of \cite[Lemma 2.26]{Ada} since one has Lemma \ref{lem:compare}.
     
     Note that we do not put any requirement on the metric $h$ in Theorem \ref{thm:poincare average}, % and Theorem \ref{thm:dbar sobolev}
      but we require that $p>1$ as we don't know whether Lemma \ref{lem:compare} still holds or not when $p=1$. The condition $p\neq 1$ 
      is very important in the proof of Lemma \ref{lem:compare} since one only has weak $L^1$ estimate for Hardy-Littlewoood maximal functions (see Lemma 
      \ref{lem:maximal function}). Surprisingly, when $p=1$, the conclusion of Theorem \ref{thm:poincare average} also holds if the corresponding manifold is balanced.
     \begin{thm}\label{thm:poincare average1}
    Let $(M,h)$ be a compact balanced manifold of complex dimension $n$, and let 
    $$1\leq q\leq \frac{2n}{2n-1}.$$
     Then there is a constant $\delta:=\delta(M,h,q)>0$ such that
    $$
    \delta\|f-f_{\operatorname{ave}}\|_{L^q(M)}\leq \|\bar\partial f\|_{L^1(M)},\ \forall f\in C^1(M).$$
    \end{thm}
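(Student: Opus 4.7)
The plan is to follow the integral-representation strategy used in the proof of Theorem~\ref{thm:poincare average}, with the balanced assumption on $M$ taking the role of Lemma~\ref{lem:compare} (which fails at $p=1$). By density I first reduce to $f\in C^{\infty}(M)$. Since $\partial M=\emptyset$ and $f\in C^{2}(M)\subset \operatorname{QSH}(M)$, Theorem~\ref{thm:green riesz1} gives
\[
f(x)-f_{\ave}=\int_{M}G(x,y)\,\Delta f(y)\,dV(y).
\]
On a balanced manifold $\Delta f=2\bar\partial^{*}\bar\partial f$ on functions (since $\bar\partial^{*}\equiv 0$ on $0$-forms), so integrating by parts once (no boundary contribution) produces the key representation
\[
f(x)-f_{\ave}=2\int_{M}\langle\bar\partial_{y}G(x,y),\bar\partial f(y)\rangle\,dV(y).
\]

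Next I apply the Green function gradient estimate from Theorem~\ref{green function uniform} in real dimension $2n$: $|\bar\partial_{y}G(x,y)|\leq|\nabla_{y}G(x,y)|\leq C\,d(x,y)^{1-2n}$, with a harmless additional $(1+|\ln d|)$ factor when $n=1$. Thus
\[
|f(x)-f_{\ave}|\leq C\int_{M}d(x,y)^{1-2n}\,|\bar\partial f(y)|\,dV(y).
\]
For every $q<\tfrac{2n}{2n-1}$ a direct polar-coordinate computation yields $\sup_{x\in M}\|d(x,\cdot)^{1-2n}\|_{L^{q}(M)}<\infty$, and Minkowski's integral inequality then immediately produces $\|f-f_{\ave}\|_{L^{q}(M)}\leq C\,\|\bar\partial f\|_{L^{1}(M)}$. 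This settles all exponents strictly below $\tfrac{2n}{2n-1}$.

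The main difficulty is the endpoint $q=\tfrac{2n}{2n-1}$: the kernel is no longer in the dual exponent, and the Minkowski/Young argument only delivers the weak-type bound $\|f-f_{\ave}\|_{L^{2n/(2n-1),\infty}(M)}\leq C\,\|\bar\partial f\|_{L^{1}(M)}$. To recover the strong endpoint I plan to argue by duality: it suffices to control $\int_{M}(f-f_{\ave})u\,dV$ uniformly over $u\in C^{\infty}(M)$ with $u_{\ave}=0$ and $\|u\|_{L^{2n}(M)}\leq 1$. For such $u$, solve $\Delta v=u$ with $v_{\ave}=0$; one more integration by parts on the balanced side converts the pairing into
\[
\int_{M}(f-f_{\ave})u\,dV=2\int_{M}\langle\bar\partial f,\bar\partial v\rangle\,dV,
\]
and the problem reduces to the borderline Sobolev control $\|\bar\partial v\|_{L^{\infty}(M)}\leq C\,\|u\|_{L^{2n}(M)}$ for solutions of $\Delta v=u$. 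Since $W^{2,2n}\not\hookrightarrow W^{1,\infty}$ in real dimension $2n$, this is not available from standard elliptic theory, and bridging the gap is the principal technical obstacle. I expect to close it either through sharper Lorentz-space regularity (the embedding $W^{2,(2n,1)}\hookrightarrow W^{1,\infty}$ together with an atomic/duality refinement of the test class), or by directly adapting the coarea/isoperimetric proof underlying the $p=1$ endpoint of Theorem~\ref{thm:poincare compact} to the $\bar\partial$-setting, exploiting the cancellation $\bar\partial^{2}=0$ in the spirit of Bourgain--Brezis.
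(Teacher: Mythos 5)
Your representation formula and Minkowski argument cleanly settle every exponent $q<\frac{2n}{2n-1}$, and you correctly identify the endpoint $q=\frac{2n}{2n-1}$ as the genuine difficulty. But the proposal leaves that endpoint unproved: the duality reduction you sketch lands exactly on the borderline elliptic estimate $\|\bar\partial v\|_{L^\infty(M)}\lesssim\|\Delta v\|_{L^{2n}(M)}$, which, as you note, fails because $W^{2,2n}\not\hookrightarrow W^{1,\infty}$ in real dimension $2n$ (a logarithmic loss is unavoidable), and neither the Lorentz-refinement nor the Bourgain--Brezis route is carried out. In its current form the proof only establishes the sub-critical range, so it does not prove the stated theorem.

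The paper closes the endpoint by a completely different and more elementary mechanism, which your last sentence gestures toward but does not execute: a Maz'ya-type truncation argument. One first proves the model inequality $\|f\|_{L^{2n/(2n-1)}(\mathbb{C}^n)}\lesssim\|\bar\partial f\|_{L^1(\mathbb{C}^n)}$ for $f\in C_c^1(\mathbb{C}^n)$ by applying the Bochner--Martinelli formula not to $f$ itself but to the truncations $f_j$ of $f$ at the dyadic levels $2^{j-1},2^j$; the kernel bound is split at a radius $r$ chosen to balance the Hardy--Littlewood maximal function $M(|\bar\partial f_j|)$ against $\|\bar\partial f_j\|_{L^1}$, and the weak $(1,1)$ maximal estimate (Lemma~\ref{lem:maximal function}) then controls $|A_j|=|\{2^j<|f|\le 2^{j+1}\}|$. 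Summing the dyadic pieces gives the strong endpoint bound; this sidesteps the failure of Young's inequality at the endpoint because one never needs the kernel in $L^{2n/(2n-1)}$. Transferring to $M$ by a partition of unity yields $\|f\|_{L^{2n/(2n-1)}(M)}\lesssim\|\bar\partial f\|_{L^1(M)}+\|f\|_{L^1(M)}$ (Corollary~\ref{cor:average1}), and the $L^1$ term is absorbed using the $q=1$ case of your representation argument (Lemma~\ref{lem:average2}, which only needs Fubini and the $L^1$ bound on $|\bar\partial_w G|$). To complete your proof you should replace the duality step with this truncation computation; the rest of your outline is compatible with the paper's.
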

    The main idea for the proof of Theorem \ref{thm:poincare average1} is inspired by the proof of \cite[Theorem 2.11]{Hebey}, but we need to do more efforts, see Section \ref{section:L1} for its proof.\\
    \begin{comment}
    \begin{thm}\label{thm:dbar sobolev}
    Let $(M,h)$ be a compact balanced manifold of complex dimension $n$, and let $q$ satisfy
    $$
     1\leq q\leq \frac{2n}{2n-1}.
    $$ 
    Then there is a constant $\delta:=\delta(M,h,q)>0$ such that
    $$ \delta\|f\|_{L^q(M)}\leq \|\bar\partial f\|_{L^1(M)},\ \forall f\in C^1(M).$$
    \end{thm}
    \end{comment}
    \indent By tracking our proofs, one can see many of the above inequalities can be easily generalized to functions in Sobolev spaces by the density of smooth functions and the continuity of the trace operator.
    
    Finally, using the same method as in \cite{DJQ}, we derive an improved $L^2$-estimate for $\bar\partial$ on a strictly pseudoconvex open subset in a K\"ahler manifold.
     \begin{thm}\label{thm:improved L2 estimate}
     Let $(M,\omega)$ be a K\"ahler manifold,  $\Omega\subset\subset M$ be an open subset with a smooth strictly plurisubharmonic boundary defining function $\rho$, and let $(E,h)$ be a Hermitian holomorphic vector bundle over $M$ such that $A_{E}:=i\Theta^{(E,h)}\wedge \Lambda\geq 0$ in bidegree $(n,1)$. Then there is a constant $\delta:=\delta(M,\omega,\Omega,\rho,E,h)>0$
     such that for any nonzero $\bar\partial$-closed $f\in L^2_{(n,1)}(\Omega,E),$ satisfying 
     $$M_f:=\int_{\Omega}\langle A_E^{-1}f,f\rangle dV<\infty,$$
     there is $u\in L_{(n,0)}^2(\Omega,E)$ such that  $\bar\partial u=f$ and 
     $$\int_{\Omega}|u|^2dV\leq \frac{\|f\|}{\sqrt{\|f\|^2+\delta M_f}}\int_{\Omega}\langle A_E^{-1}f,f\rangle dV.$$
    \end{thm}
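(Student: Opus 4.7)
The plan is to adapt the twisted $L^2$-method of \cite{DJQ} from the Euclidean setting to the K\"ahler manifold $(M,\omega)$. The argument splits into three steps: the classical H\"ormander estimate, a one-parameter family of twisted estimates, and an optimization.

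\emph{Step 1. Standard estimate.} Since $\rho$ is a smooth strictly plurisubharmonic boundary defining function, $\Omega$ is strictly pseudoconvex in $M$. The classical H\"ormander $L^2$-existence theorem on K\"ahler manifolds, together with $A_E\geq 0$ in bidegree $(n,1)$ and $M_f<\infty$, furnishes a solution of $\bar\partial u=f$ with $\|u\|^2\leq M_f$. Take $u$ to be the solution of minimal $L^2$-norm.

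\emph{Step 2. Twisted estimate with a free parameter.} Following \cite{DJQ}, for each $t\geq 0$ apply a twisted Bochner--Kodaira--Nakano estimate with a weight built from $-\rho$. The strict plurisubharmonicity of $\rho$ contributes a uniformly positive curvature correction, and specializing the resulting estimate to the minimal solution $u$ yields, for every $t\geq 0$,
$$\|u\|^2\leq M_f-t\,\delta_0\,\|u\|^2+t^2 c\,\|f\|^2,$$
with constants $\delta_0,c>0$ depending only on $(M,\omega,\Omega,\rho,E,h)$. The linear-in-$t$ gain comes from the additional curvature positivity, while the quadratic error absorbs the first and second derivatives of the weight.

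\emph{Step 3. Optimization.} Minimizing the right-hand side over $t\geq 0$ (the minimum is attained at $t^{\ast}=\delta_0\|u\|^2/(2c\|f\|^2)$) gives
$$\|u\|^2+\frac{\delta\,\|u\|^4}{\|f\|^2}\leq M_f,\qquad \delta:=\delta_0^2/(4c).$$
Rearranging, $\delta\|u\|^4\leq\|f\|^2(M_f-\|u\|^2)$; multiplying by $M_f+\|u\|^2\geq 0$ and dropping the nonnegative term $\delta\|u\|^2\cdot\|u\|^4$ that appears on the left yields
$$\|u\|^4(\|f\|^2+\delta M_f)\leq\|f\|^2\,M_f^2,$$
which is precisely the square of the claimed inequality.

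\emph{Main obstacle.} The principal technical burden lies in Step 2: transferring the twisted estimate of \cite{DJQ} from $\mathbb{C}^n$ to the K\"ahler manifold $(M,\omega)$, while keeping the dependence of $\delta_0,c$ entirely within the prescribed geometric data. In the flat case the derivatives of $\rho$ appear explicitly; on a manifold one works in K\"ahler normal coordinates and uses the compactness of $\overline\Omega$ to uniformize. Boundary terms in the integration by parts are handled as in the standard H\"ormander framework by working with forms in the domain of $\bar\partial^{\ast}$, while the K\"ahler identities ensure that no torsion terms spoil the structure of the twisted Bochner--Kodaira--Nakano formula.
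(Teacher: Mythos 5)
Your plan takes a different route from the paper and has a gap at its core. The paper does not introduce a parameter family of twists and optimize: it reduces the claim, via Hahn--Banach and Riesz (as in \cite[Theorem~4.5, Chapter~VIII]{Dem}) and the density Lemma~\ref{approximation}, to the a priori inequality
$$|(\alpha,f)|^2\Bigl(1+\frac{\delta M_f}{\|f\|^2}\Bigr)\leq\bigl(\|\bar\partial^E\alpha\|^2+\|\bar\partial^{E,*}\alpha\|^2\bigr)M_f$$
for test forms $\alpha\in C^1_{(n,1)}(\overline\Omega,E)\cap\dom(\bar\partial^{E,*})$, and it then obtains the extra positive term from the Bochner--H\"ormander--Kohn--Morrey identity (Lemma~\ref{lem:bochner}) combined with the $\bar\partial$-Sobolev inequality of Theorem~\ref{thm: dbar boundary}. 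Together with a partition of unity and the strict plurisubharmonicity of $\rho$, the latter yields the coercivity estimate $\delta\|\alpha\|^2\leq\|\partial^{E,*}\alpha\|^2+\int_{\partial\Omega}L_\rho(\alpha,\alpha)\,dS$ for test forms; the improvement is therefore a gain at the level of the basic identity, not a twist of the weight.

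Your Step~2 is the missing analytic step. The inequality $\|u\|^2\leq M_f-t\,\delta_0\,\|u\|^2+t^2c\,\|f\|^2$ is asserted but not derived, and ``specializing the twisted estimate to the minimal solution $u$'' does not explain where the $\|u\|^2$ on the right should come from: with a weight $e^{-t\psi}$ the canonical solution in the weighted metric is some $u_t\neq u$, and the twisted a priori estimate bounds $\|u_t\|^2$ by a quantity depending on $t$, $M_f$, and $\|f\|$ that contains neither $\|u_t\|$ nor $\|u\|$; one then has $\|u\|\leq\|u_t\|$ and would optimize that explicit bound over $t$, which is a different computation from yours. The linear-in-$t$ gain you want is genuinely a coercivity statement in the BKN identity for test forms, and that is exactly what Theorem~\ref{thm: dbar boundary} (one of the paper's new results) supplies. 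Your Steps~1 and~3 are sound once the intermediate inequality $\|u\|^2+\delta\|u\|^4/\|f\|^2\leq M_f$ is available -- in fact the paper's route gives the slightly sharper bound $\|u\|^2\leq\|f\|^2M_f/(\|f\|^2+\delta M_f)$ -- but without supplying that coercivity the proposal is incomplete.
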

   % Note that in Theorem \ref{thm:improved L2 estimate}, we don't require the K\"ahler metric $\omega$ is complete,
%    and the boundary defining function $\rho$ can be replaced by any other boundary defining function as the Levi strictly pseudoconvexity
%    is equivalent to our definitions given here. 
    
    Theorem \ref{thm:improved L2 estimate} can be easily generalized to any $(n,q)$-forms and any $q$-pseudoconvex open subsets.

   It is of great interest and importance to consider optimal constants in the estimates presented in all the above theorems and corollaries,
    but we will not discuss this topic in depth in the present work, which will be systematically studied in forthcoming works.\\
    
    \textbf{Acknowledgements.}
    The authors are grateful to Professor Yuan Zhou for discussions of related topics.
    This research is supported by National Key R\&D Program of China (No. 2021YFA1003100),
    NSFC grants (No. 12071310 and No. 12471079), and the Fundamental Research Funds for the Central Universities.
    \section{Preliminaries}
    \subsection{Notations and Conventions}\label{notations}\ \\
    \par In this subsection, we fix some notations, conventions and  collect some knowledge that are needed in our discussions.\\
    \indent Our convention for $\mathbb{N}$ is $\mathbb{N}:=\{0,1,2,\cdots\}.$ For any subset $A$ in a topological space $X$,
    we use $\overline{A}$ to denote its closure in $X$, $A^\circ$ to denote the set of its interior point,
    and set $\partial A:=\overline A\setminus A^\circ.$ If $A,B$ are two subsets of a topological space, then we write $A\subset\subset B$ if $\overline{A}$ is a compact subset of $B.$ \\
    \indent In this article, we only consider oriented smooth manifolds with or without boundary, and when we say a manifold, we always mean it is smooth
    and oriented.  When we consider a smooth manifold $\overline M$ with boundary, then we use $M$ to represent the interior point of $\overline M$ and use $\partial M$ to denote its boundary point, and we always equip $\partial M$ with the induced orientation. Clearly, any open subset of an oriented manifold is oriented and any complex manifold is also oriented.\\ 
    \indent  Let $M$ be a smooth manifold with or without boundary, $k\in \mathbb{N}\cup\{\infty\}$, and let $U$ be an open subset of $M$, then we use  $C^k(U)$ to denote the space of complex valued functions on $U$ which are of class $C^k$, and use $C_c^k(U)$ to denote functions in $C^k(U)$ which have compact support.
    More generally, if $A$ is an arbitrary subset of $M$, then we use $C^k(A)$ to denote the space of complex valued $C^k$-functions on an open neighborhood of $\overline A$.\\
    \indent Let $(\overline M,g)$ be a Riemannian manifold with boundary, then  we use $dV$ to denote the Riemannian volume form on $M$ induced by $g$, and use $dS$ to denote the induced volume form on $\partial M$. For any $A\subset M$ or $A\subset\partial M$, we also write $|A|$ for the volume of $A$. For any $1\leq p\leq\infty$, let $L^p(M)$ (resp. $L^p(\partial M)$) denote the space of complex valued $L^p$-integrable
    functions on $M$ (resp. $\partial M$), and the corresponding norm is denoted by $\|\cdot\|_{L^p(M)}$ (resp. $\|\cdot\|_{L^p(\partial M)}$).
    For any $x,y\in M$, let $d(x,y)$ be the geodesic distance on $M$ induced by $g$. For any $r>0$, set 
    $$B(x,r):=\{y\in M|\ d(x,y)<r\}.$$  
    \indent Let $(M,g)$ be a Riemannian manifold with or without boundary (of dimension $n$), then we use $d$ to represent the usual de Rham operator on $M$.
    Let $\nabla,\Delta$ be the gradient operator and the Laplace operator on $M$, respectively.
    In local coordinate, we may write 
    $$g=\sum_{i,j=1}^n g_{ij}dx_i\otimes dx_j,$$
    then we use $\det g:=\det((g_{ij})_{1\leq i,j\leq n})$ to denote  the determinant of $g$, and use $(g^{ij})_{1\leq i,j\leq n}$ to represent the inverse matrix of $(g_{ij})_{1\leq i,j\leq n}$.  Clearly, we have 
    $$\nabla f=\sum_{i,j}g^{ij}\frac{\partial f}{\partial x_i}\frac{\partial}{\partial x_j},\ \Delta f=\frac{1}{\sqrt{\det g}}\sum_{i,j=1}^n\frac{\partial}{\partial x_i}\left(\sqrt{\det g}g^{ij}\frac{\partial f}{\partial x_j}\right).$$
    For any $k\in\mathbb{N}$, let $\phi,\psi$ be two complex valued measurable $k$-forms on $M$, which have local expressions
    $$\phi=\frac{1}{k!}\sum_{1\leq i_1,\cdots,i_k\leq n}\phi_{i_1\cdots i_k}dx_{i_1}\wedge\cdots\wedge dx_{i_k},$$
    $$\psi=\frac{1}{k!}\sum_{1\leq j_1,\cdots,j_k\leq n}\psi_{j_1\cdots j_k}dx_{j_1}\wedge\cdots\wedge dx_{j_k},$$
    then we define 
   $$\langle \phi,\psi\rangle:=\frac{2^k}{k!}g^{i_1j_1}\cdots g^{i_k j_k}\phi_{i_1\cdots i_k}\overline{\psi_{j_1\cdots j_k}},$$
   and we also set 
   $$|\phi|^2:=\langle \phi,\phi\rangle.$$
   This definition is independent of the choice of local coordinates. For any $f\in C^1(M)$, we also set $|\nabla f|:=|df|.$ For any $1\leq p<\infty$, set
   $$\|\phi\|_{L^p(M)}:=\left(\int_{M}|\phi|^pdV\right)^{\frac{1}{p}},$$
   then we say $\phi$ is $L^p$-integrable if $\|\phi\|_{L^p(M)}<\infty$.
   If $\phi$ and $\psi$ are both $L^2$-integrable, then  we can define the global inner product of $\phi,\psi$ via 
    $$(\phi,\psi):=\int_M\langle \phi,\psi\rangle dV.$$  
     Let $d^*$ be the formal adjoint of $d$, then the Hodge-Laplace operator of $d$ is given by 
     $$\Delta_d:=dd^*+d^*d.$$
    \indent Let $(M,g)$ be a Riemannian manifold with or without boundary, the Hodge star operator $*$ is defined by the following formula
    $$\phi\wedge *\bar{\psi}=\langle \phi,\psi\rangle dV,$$
    where $\phi,\psi$ are any complex valued measurable $k$-forms for any $k\in\mathbb{N}$. It is clear that $*$ is an isometry operator.\\
    \indent  Let $M$ be a complex manifold which is equipped a smooth Riemannian metric $g$. Choose local holomorphic coordinates $z_j:=x_j+iy_j$, then we have 
    $$dz_j=dx_j+\sqrt{-1}dy_j,\ d\bar{z}_j=dx_j-\sqrt{-1}dy_j,$$
    and so the above definitions give a Hermitian metric on the space complex valued measurable $(p,q)$-forms (via complexification),
    which is still denoted by $\langle\cdot,\cdot\rangle.$ In particular, this gives a Hermitian metric on $M$. We say $(M,h)$ is a Hermitian manifold and $(M,g)$ is the underlying Riemannian manifold.\\
    \indent Let $M$ be a Hermitian manifold, and let $(E,h)$ be a Hermitian holomorphic vector bundle over $M$.
    We use $\bar\partial$ to denote the dbar operator on $M$. Set $\partial:=d-\bar\partial$, then we use $\partial^*$ and $\bar\partial^*$ to denote the formal adjoint of $\partial$ and $\bar\partial^*$, respectively. The Hodge-Laplace operator   of $\partial$ and $\bar\partial$ are given by 
     $$\Delta_\partial:=\partial\partial^*+\partial^*\partial,\ \Delta_{\bar\partial}:=\bar\partial\bar\partial^*+\bar\partial^*\bar\partial.$$
     For any $p,q\in \mathbb{N}$, let $\Lambda^{p,q}T^*M$ be the bundle of smooth $(p,q)$-forms on $M$, and let $\Lambda^{p,q}T^*M\otimes E$  denote the bundle of smooth $E$-valued $(p,q)$-forms on $M$. It is easy to see that the operator $\bar\partial$ can be naturally extended to smooth $E$-valued $(p,q)$-forms on $M$, and the 
      corresponding  extension operator will be denoted by $\bar\partial^E$. Let $D^E$ be the Chern connection on $E$,
      $\Theta^{(E,h)}$ be the Chern curvature tensor of $(E,h).$ Let $\partial^E$ be the $(1,0)$-part of $D^E$, then we have $D^E=\partial^E+\bar\partial^E$. 
       We use $\partial^{E,*}$ and $\bar\partial^{E,*}$ to denote the formal adjoint of $\partial^E$ and $\bar\partial^E$, respectively. There is a natural inner product on $\Lambda^{p,q}T^*M\otimes E$ which is given by 
    $$\langle \alpha_1\otimes e_1,\alpha_2\otimes e_2\rangle=\langle \alpha_1,\alpha_2\rangle h(e_1,e_2\rangle,$$
    where $\alpha_1,\alpha_2$ are complex valued measurable $(p,q)$-forms and $e_1,e_2$ are measurable sections of $E$. For any measurable sections $\alpha,\beta$ of $\Lambda^{p,q}T^*M\otimes E$, we set 
    $$(\alpha,\beta):=\int_{M}\langle \alpha,\beta\rangle dV,\ \|\alpha\|:=\left(\int_{M}|\alpha|^2dV\right)^{\frac{1}{2}}.$$  
    Let $L_{(p,q)}^2(M,E)$ denote the space the complex valued measurable sections $\alpha$ of $\Lambda^{p,q}T^*M\otimes E$ such that 
    $\|\alpha\|<\infty.$ Let $A$ be a subset of $M$ and let $k\in\mathbb{N}\cup\{\infty\}$, we use 
    $C^k_{(p,q)}(\overline A,E)$ to denote elements of $L_{(p,q)}^2(M,E)$ which are of class $C^k$ in an open neighborhood of 
    $\overline A$. Now we moreover assume $M$ is a K\"ahler manifold, and let $\omega$ be its K\"ahler form.
    Let $\alpha$ (resp. $\beta$) be any  measurable $E$-valued $(p,q)$-form (resp. $(p+1,q+1)$-form),
    then the Lefschetz operator $L$ is defined by $L\alpha=\omega\wedge\alpha$, and its adjoint $\Lambda$ is given by the following formula 
    $$\langle L\alpha,\beta\rangle=\langle \alpha,\Lambda\beta\rangle.$$ 
    \indent  Let $\Omega\subset\mc^n$ be an open subset, then an upper semi-continuous function $f\colon \Omega\rw [-\infty,\infty)$
        is plurisubharmonic if its restriction to every complex line in $\Omega$ is subharmonic, i.e.,satisfying the submean value inequality.
    A $C^2$ function $f$ on an open subset $\Omega\subset\mc^n$ is strictly plurisubharmonic if its complex Hessian 
    $\left(\frac{\partial^2 f}{\partial z_j\partial\bar{z}_k}(z)\right)_{1\leq j,k\leq n}$ is positive definite for any $z\in\Omega$.
    Since being (strictly) plurisubharmonic is invariant under biholomorphic coordinate change, then we say a $C^2$-function $f$ on a complex manifold $M$
    is (strictly) plurisubharmonic if the restriction to every coordinate open subset is (strictly) plurisubharmonic. 
    Moreover, an open subset  $\Omega\subset M$ with smooth boundary is strictly pseudoconvex if there is a strictly pluisubharmonic function $\rho$ which is defined 
    on an open neighborhood of $\overline\Omega$ such that $\rho$ is the boundary defining function for $\Omega$. For brevity, 
    we say $\Omega$ is strictly pseudoconvex if it has a smooth strictly plurisubharmonic boundary defining function.
    \subsection{Green functions on manifolds}\ \\
    \par In this subsection, we recall the definitions and some properties of Green functions.
    Firstly, we give the definition of Green functions. We use $\Delta^{\distr}$ to denote the distributional Laplacian operator on a Riemannian manifold. 
    \begin{defn}
    Let $(M,g)$ be a compact Riemannian manifold without boundary, then a Green function for $M$ is 
    a function $G\colon M\times M\rw \mr\cup\{-\infty\}$ which satisfies the following properties:
    \begin{itemize}
      \item[(i)] $G\in C^\infty((M\times M)\setminus\diag),$ where $\diag:=\{(x,x)|\ x\in M\}.$ 
      \item[(ii)] For any $x\in M$, $G(x,\cdot)$ is locally integrable near $x$.
      \item[(iii)] For any $x\in M$, the distributional Laplacian of $G$ satisfies 
      $$\Delta_y^{\distr}G(x,y)=\delta_x(y)-\frac{1}{|M|},$$
      where $\delta_x(\cdot)$ denotes the Dirac distribution which is concentrated at the point $x$.
    \end{itemize}
    \end{defn}
    \begin{defn}
    Let $(\overline M,g)$ be a compact Riemannian manifold with boundary, then a Green function for $\overline M$ is 
    a function $G\colon \overline M\times \overline M\rw \mr\cup\{-\infty\}$ which satisfies the following properties:
    \begin{itemize}
      \item[(i)] $G\in C^\infty((\overline M\times \overline M)\setminus\diag),$ where $\diag:=\{(x,x)|\ x\in \overline M\}.$ 
      \item[(ii)] For any $x\in M$, $G(x,\cdot)$ is locally integrable near $x$.
      \item[(iii)] For any $x\in M$, the distributional Laplacian of $G$ satisfies 
      $$\Delta_y^{\distr}G(x,y)=\delta_x(y).$$
     \item[(iv)] For any $x\in M$, $G(x,\cdot)|_{\partial M}=G(\cdot,x)|_{\partial M}=0$.
    \end{itemize}
    \end{defn}
    The above definitions can be easily modified to give the definitions of Green functions for a general elliptic operator on a manifold.\\
    \indent Now we state some properties of Green functions that will be used later.
    \begin{lem}\cite[Theorem 4.13]{Aubin}\label{green function:compact}
    Let $(M,g)$ be a compact Riemannian manifold without boundary (of dimension $n\geq 2$), then there is a Green function $G$ on $M$ which has 
    the following extra properties:
     \begin{itemize}
     \item[(i)] $G(x,y)=G(y,x)$ for all $x,y\in M,\ x\neq y$.
      \item[(ii)] For any $f\in C^2(M)$ and any $x\in M$, we have 
      $$f(x)=f_{\ave}+\int_{M}G(x,y)\Delta f(y)dV(y).$$
      \item[(iii)] If $n\geq 3$,  there is a constant $C:=C(M,g)>0$ such that 
     $$-Cd(x,y)^{2-n}\leq G(x,y)\leq C,\ |\nabla_yG(x,y)|\leq Cd(x,y)^{1-n}$$
     for all $x,y\in M,\ x\neq y.$
     \item[(iv)] If $n=2$,  there is a constant $C:=C(M,g)>0$ such that 
     $$-C(1+|\ln |d(x,y)||)\leq G(x,y)\leq C,\ |\nabla_yG(x,y)|\leq Cd(x,y)^{-1}$$
     for all $x,y\in M,\ x\neq y.$
    \end{itemize}
    \end{lem}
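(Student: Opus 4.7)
My plan is to construct $G$ explicitly by the parametrix method and then read off the stated properties from the construction. Fix a cutoff function $\chi\in C_c^\infty([0,\infty))$ with $\chi\equiv 1$ near $0$ and support inside the injectivity radius. For $n\geq 3$ define the local parametrix
$$
H(x,y):=\frac{1}{(2-n)\omega_{n-1}}\,\chi(d(x,y))\,d(x,y)^{2-n},
$$
and for $n=2$ take $H(x,y):=\frac{1}{2\pi}\chi(d(x,y))\log d(x,y)$. A direct computation in normal coordinates centered at $x$, using that $\sqrt{\det g}=1+O(r^2)$, shows that $\Delta_y H(x,y)=\delta_x(y)+\Gamma_1(x,y)$, where $\Gamma_1$ is compactly supported and has a singularity of order $d(x,y)^{2-n}$ (respectively $|\log d(x,y)|$ when $n=2$). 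From this starting kernel, I would iterate the convolution construction
$$
\Gamma_{k+1}(x,y):=\int_M \Gamma_1(x,z)\Gamma_k(z,y)\,dV(z),
$$
and observe by the standard Riesz-type estimate $\int_M d(x,z)^{-a}d(z,y)^{-b}\,dV(z)\lesssim d(x,y)^{n-a-b}$ (with logarithmic corrections at the endpoints) that the singularity improves at each step, so that after finitely many iterations one reaches a bounded, then H\"older continuous, and eventually smooth remainder.

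With the iterated parametrix $\widetilde H(x,y):=H(x,y)+\sum_{k=1}^{N}\int H(x,z_1)\cdots\Gamma_1(z_k,y)$ (truncated appropriately), one arrives at $\Delta_y \widetilde H(x,y)=\delta_x(y)+R(x,y)$ with $R$ smooth in both variables. To finish, I solve the corrector equation
$$
\Delta_y u(x,y)=-R(x,y)-\frac{1}{|M|},\qquad \int_M u(x,y)\,dV(y)=0,
$$
whose right-hand side is smooth in $(x,y)$ and has mean zero in $y$ (the mean of $R(x,\cdot)$ is automatically $-1$ by integrating the identity $\Delta_y\widetilde H=\delta_x+R$ over $M$). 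Existence, uniqueness, and smooth dependence on $x$ follow from the Fredholm theory of $\Delta$ on the orthogonal complement of the constants, or equivalently from the spectral decomposition. Setting $G(x,y):=\widetilde H(x,y)+u(x,y)$ yields a kernel satisfying condition (iii) of the definition, with the singular behavior of the leading term $H$ untouched, which gives the upper bounds on $|G|$ and $|\nabla_y G|$ asserted in (iii) and (iv): the singular part supplies exactly $d(x,y)^{2-n}$ (resp.\ $|\log d(x,y)|$) and the gradient picks up one power of $d(x,y)^{-1}$, while the corrector $u$ is smooth and therefore bounded together with its derivatives.

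The symmetry property (i) is obtained from the fact that $G$ and the constant $\tfrac{1}{|M|}$ are real, that $\Delta$ is formally self-adjoint, and the identity $\Delta_y^{\distr}G(x,y)=\delta_x(y)-\tfrac{1}{|M|}$: pairing $G(x,\cdot)$ against $\Delta_z G(y,\cdot)$ and using Green's identity on $M\setminus(B(x,\varepsilon)\cup B(y,\varepsilon))$, then letting $\varepsilon\to 0$ and exploiting the integrability of $G$ and the singular asymptotics of $G$ and $\nabla G$ established above, yields $G(x,y)=G(y,x)$ after using that $\int_M G(x,\cdot)dV=\int_M G(y,\cdot)dV$ can be renormalized to zero by adding a function of $x$ alone (which by symmetry must be a constant, absorbable into the freedom in the definition of $G$). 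For property (ii), apply the same Green's identity to $G(x,\cdot)$ and a given $f\in C^2(M)$ on $M\setminus B(x,\varepsilon)$: the boundary terms on $\partial B(x,\varepsilon)$ converge to $f(x)$ by the asymptotic $G(x,y)\sim c_n d(x,y)^{2-n}$ combined with $|\nabla_y G(x,y)|\sim (n-2)c_n d(x,y)^{1-n}$ and the volume of the sphere, while the interior term becomes $\int_M G(x,y)\Delta f(y)\,dV(y)+\tfrac{1}{|M|}\int_M f\,dV=f_{\ave}+\int_M G(x,y)\Delta f(y)\,dV(y)$, as required. The upper bounds $G\leq C$ in (iii) and (iv) follow from the maximum principle applied to the smooth corrector together with the sign of the singular part.

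The main obstacle I anticipate is controlling the gradient estimate uniformly: while the size bound on $G$ is immediate from the parametrix, showing $|\nabla_y G(x,y)|\lesssim d(x,y)^{1-n}$ \emph{uniformly up to the diagonal} requires that one differentiate the iterated parametrix carefully and check that each iterated convolution gains a full power of $d$, which is borderline at the first step. The most efficient route is to treat the gradient bound via interior Schauder estimates for $u(x,\cdot)$ on balls of radius comparable to $d(x,y)$, exploiting that $\Delta u$ is smooth, and then to add the explicit gradient of $H$.
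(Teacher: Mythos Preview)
The paper does not supply its own proof of this lemma; it is simply quoted from Aubin's book as Theorem~4.13 there, so there is nothing in the paper to compare against beyond the citation. Your parametrix construction is exactly the standard route (and is essentially what Aubin carries out): build a local fundamental solution from $d(x,y)^{2-n}$ (or $\log d(x,y)$), iterate convolutions to smooth the remainder, and solve a mean-zero Poisson equation for the corrector.

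One small point worth tightening: in your symmetry argument you write that the leftover function of $x$ ``by symmetry must be a constant,'' which is circular. The clean version is that Green's identity on $M\setminus(B(x,\varepsilon)\cup B(y,\varepsilon))$ gives $G(x,y)-G(y,x)=c(x)-c(y)$ with $c(x):=\tfrac{1}{|M|}\int_M G(x,z)\,dV(z)$; then $\widetilde G(x,y):=G(x,y)-c(x)$ is symmetric and still satisfies the defining distributional equation, since subtracting a function of $x$ alone does not affect $\Delta_y$. The rest of your outline (including the gradient bound via the explicit $\nabla_y H$ plus interior elliptic estimates for the smooth corrector on balls of radius $\sim d(x,y)$) is sound.
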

    \begin{lem}\cite[Theorem 4.17]{Aubin}\label{green function:noncompact}
    Let $(\overline M,g)$ be a compact Riemannian manifold with boundary (of dimension $n\geq 2$), then there is a unique Green function $G$ on $M$ which has 
    the following extra properties:
     \begin{itemize}
      \item[(i)] For all $f\in C^2(\overline M),\ x\in M$, we have 
     $$f(x)=\int_{M}G(x,y)\Delta f(y)dV(y)+\int_{\partial M}\frac{\partial G(x,y)}{\partial\nve_y}f(y)dS(y),$$
     where $\nve$ is the unit outward normal vector of $\partial M$.
     \item[(ii)] $G(x,y)=G(y,x)\leq 0$ for all $(x,y)\in (\overline M\times\overline M)\setminus\diag$.
    \end{itemize}
    \end{lem}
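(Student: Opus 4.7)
The plan is to construct the Green function by a parametrix argument and then verify its properties via Green's second identity and the maximum principle.

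First, I would build a local parametrix modeling the expected singularity along the diagonal. In geodesic normal coordinates centered at $x$, writing $r=d(x,y)$, take
$$\Gamma(x,y):=\begin{cases} \dfrac{1}{(2-n)\omega_{n-1}}\,r^{2-n}\chi(r), & n\geq 3,\\ \dfrac{1}{2\pi}\log r\cdot\chi(r), & n=2,\end{cases}$$
where $\chi$ is a smooth cutoff supported in a tubular neighborhood of the diagonal on which normal coordinates are defined, and $\omega_{n-1}$ is the volume of the unit Euclidean sphere. Using the local formula $\Delta=\frac{1}{\sqrt{\det g}}\partial_i(\sqrt{\det g}\,g^{ij}\partial_j)$, a direct calculation gives $\Delta_y\Gamma(x,\cdot)=\delta_x+E(x,\cdot)$, where $E$ has only a mild (integrable) singularity coming from derivatives of the metric at $x$. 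For each $x\in M$, I would then solve the Dirichlet problem $\Delta_y h(x,\cdot)=-E(x,\cdot)$ with $h(x,\cdot)|_{\partial M}=-\Gamma(x,\cdot)|_{\partial M}$; this is well posed since the Dirichlet Laplacian on $(\overline M,g)$ has trivial kernel, and elliptic regularity yields a solution smooth up to $\partial M$ and depending smoothly on the parameter $x$ away from the diagonal. Setting $G(x,\cdot):=\Gamma(x,\cdot)+h(x,\cdot)$ then yields a Green function in the sense of the definition, and uniqueness is immediate from the maximum principle applied to the difference of two candidates.

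Next, to prove the representation formula (i), I would apply Green's second identity to $f\in C^2(\overline M)$ and $G(x,\cdot)$ on the excised domain $M\setminus B(x,\epsilon)$. Since $\Delta_y G(x,\cdot)=0$ on $M\setminus\{x\}$ and $G(x,\cdot)$ vanishes on $\partial M$, after passing to $\epsilon\to 0$ the surviving terms are $\int_M G(x,y)\Delta f(y)\,dV(y)$, the boundary integral $\int_{\partial M}f(y)\frac{\partial G(x,y)}{\partial\nve_y}dS(y)$, and the residue $f(x)$ coming from $\partial B(x,\epsilon)$ via the asymptotics $|\nabla_y\Gamma|\sim\omega_{n-1}^{-1}r^{1-n}$; the companion piece $\int_{\partial B(x,\epsilon)} G(x,y)\,\partial_\nu f(y)\,dS(y)$ is $O(\epsilon)$ (or $O(\epsilon|\log\epsilon|)$ when $n=2$) and drops out in the limit. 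Assembling these gives the stated formula. The symmetry $G(x,y)=G(y,x)$ follows by applying the same identity to the pair $G(x,\cdot),G(y,\cdot)$ on $M\setminus(B(x,\epsilon)\cup B(y,\epsilon))$: the $\partial M$ terms vanish by the zero Dirichlet condition on both, and the two small-sphere residues deliver $G(x,y)$ and $G(y,x)$ respectively.

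Finally, for $G\leq 0$, fix $x\in M$. The sign of the parametrix forces $G(x,y)\to -\infty$ as $y\to x$, while $G(x,\cdot)$ is harmonic on $M\setminus\{x\}$ and vanishes on $\partial M$; the maximum principle applied on $M\setminus\overline{B(x,\epsilon)}$ for arbitrarily small $\epsilon>0$ then forces $G\leq 0$ everywhere. The main obstacle of the plan is the careful parametrix analysis on a curved background: one has to control the singularity of $E$ in normal coordinates uniformly in $x$, and then prove that the Dirichlet correction $h(x,y)$ is jointly smooth in $(x,y)$ off the diagonal and continuous up to the boundary, so that the constructed $G=\Gamma+h$ satisfies the regularity and locality demanded by the definition. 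Once this regularity is secured, the representation, symmetry, and negativity are all standard consequences of Green's identity and the maximum principle.
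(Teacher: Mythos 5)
The paper does not prove this lemma; it cites it directly as \cite[Theorem 4.17]{Aubin}, so there is no in-paper proof to compare against. Your proposed proof is the standard parametrix construction, which is essentially the route Aubin takes: build a local parametrix $\Gamma$ capturing the Euclidean-type singularity, observe $\Delta_y\Gamma(x,\cdot)=\delta_x+E(x,\cdot)$ with $E$ less singular, correct by a Dirichlet problem to kill both $E$ and the boundary values, and then derive the representation formula, symmetry, and nonpositivity from Green's second identity and the maximum principle. The broad plan is sound and the formula/sign/symmetry arguments at the end are correct.

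The one place where the sketch is thinner than it should be is exactly the place you flag as ``the main obstacle,'' namely regularity of the correction term and joint smoothness of $G$ off the diagonal. Merely knowing that $E(x,\cdot)$ is integrable does not give that $h(x,\cdot)\in W^{2,p}$ is smooth, and the assertion that $h$ ``depends smoothly on the parameter $x$ away from the diagonal'' is not automatic from solving a family of Dirichlet problems. The standard fix, which should be spelled out, is to iterate the parametrix (a Levi/Hadamard-type construction) enough times so that the residual error $E(x,y)$ is $C^k$ across the diagonal for arbitrarily large $k$; then Schauder or $W^{k,p}$ elliptic estimates, together with smooth dependence of solutions of the Dirichlet problem on the data and on the point $x$ entering the coefficients, give that $h$ and hence $G$ lie in $C^\infty\bigl((\overline M\times\overline M)\setminus\diag\bigr)$. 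In $n=2$ one also has to check that the log-singular parametrix produces the correct $\delta_x$ normalization (constant $\frac{1}{2\pi}$ is right) and that the residue term on $\partial B(x,\epsilon)$ is $O(\epsilon|\log\epsilon|)$ and so vanishes, which you note. With that regularity step filled in, the rest of the argument goes through.
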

    \subsection{Some useful lemmas}\ \\
    \par In this subsection, we collect some useful lemmas that will be used later.
    \begin{lem}\cite[Theorem 4.7]{Aubin}\label{Aubin:Theorem 4.7}
    Let $(M,g)$ be a compact Riemannian manifold without boundary, and let $f\in C^\infty(M)$ such that $\int_{M}fdV=0$,
    then we may find $u\in C^\infty(M)$ such that $\Delta u=f$ and $\int_{M}udV=0.$
    \end{lem}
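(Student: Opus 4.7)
The plan is to construct $u$ explicitly via the Green function guaranteed by Lemma \ref{green function:compact}. I would set
$$u(x):=\int_M G(x,y)f(y)\,dV(y).$$
By parts (iii)--(iv) of Lemma \ref{green function:compact}, the singularity of $G$ on the diagonal is of order $d(x,y)^{2-n}$ (or logarithmic when $n=2$), which is locally integrable in dimension $n$; combined with the boundedness of $f$ on the compact manifold $M$, this shows $u$ is well-defined and in fact continuous.

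Next I would verify $\Delta u=f$ in the sense of distributions. For a test function $\phi\in C^\infty(M)$, Fubini's theorem (justified by the above integrability) gives
$$\int_M u\,\Delta\phi\,dV=\int_M f(y)\left(\int_M G(x,y)\,\Delta_x\phi(x)\,dV(x)\right)dV(y).$$
Using the symmetry $G(x,y)=G(y,x)$ from part (i) of Lemma \ref{green function:compact} together with the Green representation formula in part (ii) applied to $\phi$, the inner integral equals $\phi(y)-\phi_{\ave}$. Since $\int_M f\,dV=0$, the $\phi_{\ave}$ term drops out, leaving $\int_M f\phi\,dV$, which proves $\Delta u=f$ distributionally.

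Elliptic regularity for the Laplacian on the smooth compact manifold $M$ then upgrades $u$ from an $L^\infty$ distributional solution to a genuine $C^\infty$ solution, since $f\in C^\infty(M)$. Finally, replacing $u$ by $u-u_{\ave}$ preserves the equation $\Delta u=f$ (adding a constant does not change the Laplacian) while enforcing the normalization $\int_M u\,dV=0$.

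The only step requiring care is the Fubini exchange in the distributional computation; the main potential obstacle is controlling the singularity of $G$ uniformly, but this is handled precisely by the bounds in parts (iii)--(iv) of Lemma \ref{green function:compact}. An alternative route, which I would fall back on if one preferred avoiding the Green function, is to use the spectral decomposition of the Laplacian on $L^2(M)$: $\Delta$ is self-adjoint with $\ker\Delta=\mathbb{R}$, so its image on smooth functions is exactly the mean-zero smooth functions, and elliptic regularity again produces a smooth preimage.
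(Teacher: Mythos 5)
The paper does not prove this lemma; it is cited directly from Aubin. So there is no ``paper's proof'' to match against, and the proposal must stand on its own.

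Your primary argument via the Green function is mathematically sound as written: the integrability estimates in parts (iii)--(iv) of Lemma \ref{green function:compact} justify the Fubini exchange, the symmetry in (i) and the representation formula in (ii) give the distributional identity $\Delta^{\distr}u=f$, and elliptic regularity then yields $u\in C^\infty(M)$; subtracting the mean is harmless. However, there is a logical-ordering concern you should flag: the source cited for Lemma \ref{green function:compact} (Aubin, Theorem 4.13) constructs the Green function \emph{using} the solvability result you are trying to prove (Theorem 4.7), since the correction term in the parametrix construction is obtained precisely by solving $\Delta u = f$ with $\int_M f\,dV = 0$. So deducing Lemma \ref{Aubin:Theorem 4.7} from Lemma \ref{green function:compact} inverts the standard development and is circular unless one has an independent construction of the Green function (e.g., via the heat kernel, as in the paper's sketch of a second proof of Theorem \ref{green function uniform}).

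Your fallback is the right primary argument: on a closed manifold, $\Delta$ is self-adjoint on $L^2(M)$ with compact resolvent, $\ker\Delta=\mathbb{R}$, and $0$ is an isolated eigenvalue, so $\operatorname{Range}(\Delta)=(\ker\Delta)^\perp=\{f:\int_M f\,dV=0\}$ is closed; one obtains a weak solution $u$ with zero mean, and elliptic regularity (e.g., Schauder or $L^p$ theory, iterated) promotes $u$ to $C^\infty$ since $f\in C^\infty(M)$. That is essentially Aubin's own proof and is non-circular. I would lead with it and present the Green-function computation, if at all, only as a corollary.
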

     The following lemma is essentially proved in the proof of Part (b) of \cite[Theorem 4.13]{Aubin}.
    \begin{lem}\label{Aubin:Theorem 4.7b}
    Let $(M,g)$ be a compact Riemannian manifold without boundary, then there is a constant $C:=C(M,g)>0$ such that 
    $$\sup_{M}|u|\leq C\sup_M|f|,$$
    for all  $u,f\in C^\infty(M)$ satisfying 
    $$\Delta u=f,\ \int_M udV=\int_{M}fdV=0.$$ 
    \end{lem}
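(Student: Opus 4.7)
The plan is to deduce the estimate directly from the Green representation formula of Lemma \ref{green function:compact}(ii) combined with the pointwise bounds on the Green function given in parts (iii) and (iv) of the same lemma. Let $G$ be the Green function on $(M,g)$. Since $\int_M u\,dV=0$, we have $u_{\ave}=0$, and the representation formula with $\Delta u = f$ gives
$$
u(x)=\int_{M}G(x,y)f(y)\,dV(y),\qquad \forall\,x\in M.
$$
Taking absolute values and pulling out $\sup_M|f|$ yields
$$
|u(x)|\leq \sup_M|f|\cdot\int_M|G(x,y)|\,dV(y),
$$
so the lemma reduces to showing that the quantity $K(x):=\int_M|G(x,y)|\,dV(y)$ is uniformly bounded in $x\in M$ by a constant depending only on $(M,g)$.

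To bound $K(x)$, I would use the pointwise estimates on $G$ from Lemma \ref{green function:compact}. When $n\geq 3$, combining $-Cd(x,y)^{2-n}\leq G(x,y)\leq C$ gives $|G(x,y)|\leq C(1+d(x,y)^{2-n})$; when $n=2$, the analogous inequality gives $|G(x,y)|\leq C(1+|\ln d(x,y)|)$. Fix a radius $r_0>0$ smaller than the injectivity radius of $(M,g)$. On $M\setminus B(x,r_0)$, the integrand is uniformly bounded by a constant (depending on $r_0$ and the diameter of $M$), so its contribution to $K(x)$ is controlled by $C\,|M|$. On $B(x,r_0)$, I would pass to normal coordinates centered at $x$; the volume form satisfies $dV(y)\leq C r^{n-1}\,dr\,d\sigma$ with $r=d(x,y)$, so
$$
\int_{B(x,r_0)}d(x,y)^{2-n}\,dV(y)\leq C\int_0^{r_0}r\,dr<\infty\quad(n\geq 3),
$$
$$
\int_{B(x,r_0)}(1+|\ln d(x,y)|)\,dV(y)\leq C\int_0^{r_0}(1+|\ln r|)\,r\,dr<\infty\quad(n=2),
$$
and both bounds are independent of $x$ by the compactness of $M$ (which makes the constants in the volume comparison uniform in $x$).

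Combining the two regions gives $K(x)\leq C=C(M,g)$, and therefore $\sup_M|u|\leq C\sup_M|f|$, as claimed. The main technical point, which is not truly an obstacle but the only place care is required, is ensuring that the constants appearing in the normal-coordinate volume comparison and the choice of $r_0$ can be selected uniformly in the base point $x$; this is guaranteed by compactness of $M$ together with smoothness of $g$. Note that no separate argument is needed for any boundary term, since $M$ has no boundary, and no appeal to Lemma \ref{Aubin:Theorem 4.7} is required for the estimate itself — Lemma \ref{Aubin:Theorem 4.7} only guarantees existence of $u$, whereas here we are given $u$ and simply need the bound.
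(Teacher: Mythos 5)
Your proof is correct, and since the paper does not actually supply an argument for this lemma (it simply notes that the statement is ``essentially proved in the proof of Part (b) of \cite[Theorem 4.13]{Aubin}'') there is nothing literal to compare it against. The route you chose — invoking the Green representation $u(x)=\int_M G(x,y)f(y)\,dV(y)$ (valid since $u_{\ave}=0$), pulling out $\sup_M|f|$, and uniformly bounding $\sup_{x\in M}\int_M|G(x,y)|\,dV(y)$ using the pointwise estimates of Lemma~\ref{green function:compact}(iii)--(iv) together with a normal-coordinate volume comparison near the diagonal — is the natural self-contained derivation within the logical structure of this paper. It is worth noting that the uniform integral bound you establish is precisely what the paper itself later proves in Lemma~\ref{lem:laplace 1} (set $a=2$ when $n\geq 3$, and $p=1$ when $n=2$), and your proof uses only Lemma~\ref{green function:compact}, so there is no circularity with the later use of Lemma~\ref{Aubin:Theorem 4.7b} in the proof of Theorem~\ref{green function uniform}. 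Your closing observation that no appeal to Lemma~\ref{Aubin:Theorem 4.7} is needed is also correct — that lemma supplies existence, whereas here $u$ is given. In short, the argument is complete and consistent with the paper; the only difference from the paper's treatment is that the paper defers entirely to Aubin, while you have made the dependence on Lemma~\ref{green function:compact} explicit.
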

    \begin{comment}
    By \cite[Theorem 4.8]{Aubin}, we have 
    \begin{lem}\label{Aubin:Theorem 4.81}
    Let $(\overline M,g)$ be a compact Riemannian manifold with boundary, then for any $f\in C^\infty(\partial M)$,
    we may find $u\in C^\infty(\overline M)$ such that 
    $$\Delta u=f\text{ in }M,\ u|_{\partial M}=0.$$
    \end{lem}
    \end{comment}
    By \cite[Theorem 6.24]{Gilbarg},\ \cite[Theorem 4.8]{Aubin} and Weyl's lemma (see \cite[Theorem 9.19]{Gilbarg}), we may easily get 
    \begin{lem}\cite[Theorem 4.8]{Aubin}\label{Aubin:Theorem 4.8}
    Let $(\overline M,g)$ be a compact Riemannian manifold with boundary, then for any $f\in C^0(\partial M)$,
    we may find $u\in C^0(\overline M)\cap C^\infty(M)$ such that 
    $$\Delta u=0\text{ in }M,\ u|_{\partial M}=f.$$
    Moreover, if $f$ is smooth near the boundary, then $u\in C^\infty(\overline M)$.
    \end{lem}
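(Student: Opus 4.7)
The plan is to bootstrap from the classical smooth Dirichlet theory (Aubin's Theorem 4.8 or Gilbarg--Trudinger Theorem 6.24) by approximating a merely continuous boundary datum $f$ by smooth data $f_k\in C^\infty(\partial M)$, solving the Dirichlet problem for each $f_k$, and then passing to the uniform limit using the maximum principle. Weyl's lemma will then promote the limit from a distributional solution back to a classical smooth harmonic function in the interior.

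First, I would choose a sequence $\{f_k\}\subset C^\infty(\partial M)$ with $f_k\to f$ uniformly on $\partial M$; this is possible, for instance, by embedding $\partial M$ smoothly into some Euclidean space, extending $f$ continuously, convolving with a mollifier, and restricting. For each $k$, apply the classical smooth Dirichlet solvability (\cite[Theorem 4.8]{Aubin} or \cite[Theorem 6.24]{Gilbarg}) to obtain $u_k\in C^\infty(\overline M)$ with $\Delta u_k=0$ in $M$ and $u_k|_{\partial M}=f_k$. By the weak maximum principle for harmonic functions on $\overline M$,
\[
\sup_{\overline M}|u_k-u_\ell|\;=\;\sup_{\partial M}|f_k-f_\ell|\;\longrightarrow\;0\quad\text{as }k,\ell\to\infty,
\]
so $\{u_k\}$ is Cauchy in $C^0(\overline M)$. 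Let $u\in C^0(\overline M)$ be the uniform limit; then $u|_{\partial M}=f$.

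Next I want to show $u$ is harmonic in $M$. Since $u_k\to u$ uniformly on $\overline M$, we have $\int_M u\,\Delta\varphi\,dV=\lim_k\int_M u_k\,\Delta\varphi\,dV=0$ for every $\varphi\in C_c^\infty(M)$ (integrating by parts against the smooth $u_k$). Hence $\Delta^{\distr}u=0$ in $M$, and Weyl's lemma \cite[Theorem 9.19]{Gilbarg} yields $u\in C^\infty(M)$ with $\Delta u=0$ classically.

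For the moreover part, suppose $f$ is smooth in an open neighborhood $V$ of $\partial M$ in $\overline M$. Then, working in a slightly smaller collar neighborhood of $\partial M$ where $f$ is smooth, the boundary-regularity part of \cite[Theorem 6.24]{Gilbarg} (applied locally in boundary charts, using interior Schauder estimates away from the boundary) gives $u\in C^\infty(\overline M)$. The only mild subtlety is that Gilbarg--Trudinger is stated for Euclidean domains, so one works in boundary coordinate charts where $\Delta$ becomes a smooth uniformly elliptic operator with smooth coefficients and applies the Schauder boundary estimates chart by chart; this together with interior smoothness already obtained from Weyl's lemma gives global smoothness up to the boundary. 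No step is genuinely deep, and the only point requiring mild care is the chart-by-chart boundary regularity, but since it is entirely standard I expect no real obstacle.
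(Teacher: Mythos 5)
Your proof is correct and fills in precisely the standard argument that the paper leaves implicit by citing Gilbarg--Trudinger, Aubin's Theorem 4.8, and Weyl's lemma: approximate the continuous boundary data by smooth data, solve the Dirichlet problem for each approximant, pass to the uniform limit via the maximum principle, and upgrade interior regularity with Weyl's lemma. The chart-by-chart Schauder boundary estimate for the ``moreover'' part is also the expected route.
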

    The following Cheng-Yau's gradient estimate is well known.
    \begin{lem}[{\cite[Theorem 6.1]{Li}}]\label{gradient estimate 1}
    Let $(M,g)$ be a compact Riemannian manifold without boundary (of dimension $\geq 2$), then 
    there is a constant $C:=C(M,g)>0$ such that for any $x_0\in M$, any $r>0$, and any positive harmonic function $u$ on $B(x_0,r)$, we have 
    $$\sup_{B\left(x_0,\frac{r}{2}\right)}|\nabla u(x)|\leq \frac{C}{r}\sup_{B\left(x_0,\frac{r}{2}\right)}u.$$
    \end{lem}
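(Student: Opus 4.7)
The plan is to reproduce the classical Cheng--Yau proof, which applies Bochner's formula to $\log u$ and combines it with the maximum principle via a suitable cutoff. Since $(M,g)$ is compact, its Ricci curvature admits a uniform lower bound $\mathrm{Ric}\geq -(n-1)K$ with $K=K(M,g)\geq 0$, and the injectivity radius is bounded below. If $r$ exceeds $\mathrm{diam}(M)$, then $B(x_0,r)=M$; a positive harmonic function on a compact connected manifold is constant, so $|\nabla u|=0$ and the estimate is trivial. Hence we may assume $r\leq \mathrm{diam}(M)$.

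First, set $f:=\log u$, which is well-defined because $u>0$ on $B(x_0,r)$. A direct computation using $\Delta u=0$ gives $\Delta f = -|\nabla f|^2=:-w$. Applying Bochner's formula to $f$, using the standard pointwise bound $|\nabla^2 f|^2\geq (\Delta f)^2/n = w^2/n$, and inserting the Ricci lower bound, one obtains
$$\tfrac{1}{2}\Delta w \;\geq\; \tfrac{1}{n}w^2 \;-\; \langle\nabla w,\nabla f\rangle \;-\; (n-1)K\,w.$$
Next, construct a smooth cutoff $\phi\colon M\to[0,1]$ built as a function of $d(\,\cdot\,,x_0)$, satisfying $\phi\equiv 1$ on $B(x_0,r/2)$, $\mathrm{supp}\,\phi\subset B(x_0,r)$, $|\nabla\phi|^2\leq C_0/r^2$, and $\Delta\phi\geq -C_0/r^2$. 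The Laplacian lower bound comes from the Laplacian comparison theorem applied to $d(\,\cdot\,,x_0)$, combined with Calabi's trick to handle the cut locus (see below).

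Consider $F:=\phi^2 w$ on $\overline{B(x_0,r)}$. Since $F$ vanishes on $\partial B(x_0,r)$, its maximum is attained at an interior point $p$ (assuming $w\not\equiv 0$, else $u$ is constant). At $p$ we have $\nabla F(p)=0$ and $\Delta F(p)\leq 0$. Expanding
$$\Delta F \;=\; 2w|\nabla\phi|^2 + 2\phi w\,\Delta\phi + 4\phi\langle\nabla\phi,\nabla w\rangle + \phi^2\Delta w,$$
using the identity $\phi\nabla w = -2w\nabla\phi$ at $p$ to rewrite $4\phi\langle\nabla\phi,\nabla w\rangle=-8w|\nabla\phi|^2$, substituting the Bochner bound for $\phi^2\Delta w$, and estimating the cross term via $-2\phi^2\langle\nabla w,\nabla f\rangle = 4w\phi\langle\nabla\phi,\nabla f\rangle \geq -4w^{3/2}\phi|\nabla\phi|$ followed by Young's inequality with $\epsilon=1/(4n)$, one obtains the quadratic-in-$w$ inequality
$$\tfrac{1}{n}\phi^2 w^2 \;\leq\; (6+4n)\,w|\nabla\phi|^2 \;+\; 2(n-1)K\,\phi^2 w \;-\; 2\phi w\,\Delta\phi.$$
Dividing by $w>0$ and inserting $|\nabla\phi|^2\leq C_0/r^2$, $\Delta\phi\geq -C_0/r^2$ gives $F(p)\leq nC_1(1/r^2+K)$, and since $K\leq K\,\mathrm{diam}(M)^2/r^2$, this is bounded by $C_2/r^2$ with $C_2=C_2(M,g)$. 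Because $\phi\equiv 1$ on $B(x_0,r/2)$, we conclude $w=|\nabla u|^2/u^2\leq C_2/r^2$ on $B(x_0,r/2)$, which yields the claim after multiplying by $u$ and taking suprema.

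The main technical obstacle is that $d(\,\cdot\,,x_0)$ is only Lipschitz and fails to be smooth across the cut locus, so the Laplacian comparison theorem does not apply pointwise everywhere. The standard remedy is Calabi's trick: at the maximizing point $p$, one perturbs the initial direction of a minimizing geodesic from $x_0$ to $p$ to obtain a smooth upper barrier for the distance function in a neighborhood of $p$, and verifies the differential inequality for $F$ in the viscosity sense, which suffices for the maximum principle argument above. With this technicality understood, the estimate follows as described.
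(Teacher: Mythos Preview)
The paper does not prove this lemma; it is stated as the well-known Cheng--Yau gradient estimate and simply cited from \cite[Theorem 6.1]{Li}. Your proposal reproduces the standard Cheng--Yau argument (Bochner's formula applied to $\log u$, a radial cutoff with Laplacian-comparison bounds, the maximum principle on $\phi^2|\nabla\log u|^2$, and Calabi's trick at the cut locus), which is exactly the proof one finds in the cited reference, and the sketch is correct.
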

    \section{Uniform estimates of Green functions on $\mr^n$}
    In this section, we will consider the uniform estimates of Green functions for constant coefficient elliptic operators,
    which will be used to prove Lemma \ref{gradient estimate 2}. By a coordinate change, we only need to consider the case of Laplace operator.
    Indeed, we have 
    \begin{thm}\label{estimate:euclidean}
    Let $\Omega\subset\mr^n(n\geq 2)$ be a bounded open subset with $C^2$-boundary, then there is a function $G$ on 
    $\Omega\times \Omega$, which satisfies the following properties:
    \begin{itemize}
      \item[(i)] For all $f\in C^2(\overline \Omega),\ x\in \Omega$, we have 
     $$f(x)=\int_{\Omega}G(x,y)\Delta f(y)dV(y)+\int_{\partial \Omega}\frac{\partial G(x,y)}{\partial\nve_y}f(y)dS(y).$$
      \item[(ii)] $G\in C^2(\Omega\times \Omega)\setminus\diag)$, where $\diag:=\{(x,x)|\ x\in \Omega\}.$ Moreover, we know $G\leq 0$.
      \item[(iii)] For any $(x,y)\in (\Omega\times\Omega)\setminus\diag$, we have 
                  $$ |G(x,y)|\leq \left\{\begin{array}{ll}
                  |x-y|^{2-n} & \text{ if }n\geq 3,\\
                  |\ln|x-y||+|\ln|\diam(\Omega)||  & \text{ if }n=2,
                  \end{array}\right.$$
                  where $\diam(\Omega)$ is the diameter of $\Omega$.
      \item[(iv)] There is a constant $C:=C(\Omega)>0$ such that for all $(x,y)\in (\Omega\times\Omega)\setminus\diag$,
                  we have 
                  $$|G(x,y)|\leq C\cdot\left\{\begin{array}{ll}
                  |x-y|^{1-n}\delta(x) & \text{ if }n\geq 3,\\
                  (1+|\ln|x-y||)\frac{\delta(x)}{|x-y|}  & \text{ if }n=2.
                  \end{array}\right.$$
                  where $\delta(x):=\inf_{z\in\partial\Omega}|x-z|.$
      \item[(v)] There is a constant $C:=C(\Omega)>0$ such that for all $(x,y)\in (\Omega\times\Omega)\setminus\diag$,
                  we have 
                  $$|\nabla_y G(x,y)|\leq C\cdot\left\{\begin{array}{ll}
                  |x-y|^{1-n}\min\left\{1,\frac{\delta(x)}{|x-y|}\right\} & \text{ if }n\geq 3,\\
                  \frac{1+|\ln|x-y||}{|x-y|}\cdot\min\left\{1,\frac{\delta(x)}{|x-y|}\right\}  & \text{ if }n=2.
                  \end{array}\right.$$
      \item[(vi)] There is a constant $C:=C(\Omega)>0$ such that for all $(x,y)\in (\Omega\times\Omega)\setminus\diag$,
                  we have 
                  $$|\nabla_x\nabla_y G(x,y)|\leq C\cdot\left\{\begin{array}{ll}
                  |x-y|^{-n} & \text{ if }n\geq 3,\\
                  \frac{1+|\ln|x-y||}{|x-y|^2}  & \text{ if }n=2.
                  \end{array}\right.$$                 
    \end{itemize}
    \end{thm}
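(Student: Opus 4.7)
The plan is to construct $G$ via $G(x,y) = \Gamma(x-y) - h(x,y)$, where $\Gamma$ is the fundamental solution of the Laplacian in $\mathbb{R}^n$ and, for each fixed $x \in \Omega$, $h(x,\cdot)$ is the harmonic extension to $\Omega$ of the boundary data $\Gamma(x-\cdot)|_{\partial\Omega}$. The existence and regularity of $h$ follow from classical Schauder theory on $C^2$-domains, and the symmetry $G(x,y)=G(y,x)$ is obtained by Green's second identity applied to $G(x,\cdot)$ and $G(y,\cdot)$ on $\Omega \setminus (B(x,\epsilon) \cup B(y,\epsilon))$ and letting $\epsilon \to 0$. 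Property (i) is then the standard Green representation formula (same identity applied to $f$ and $G(x,\cdot)$); property (ii) follows from the smoothness of $\Gamma$ away from the origin, together with the maximum principle for $G(x,\cdot)$ (harmonic away from $x$, vanishing on $\partial\Omega$, tending to $-\infty$ at $x$), which yields $G \leq 0$.

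For (iii), when $n \geq 3$ we use $\Gamma \leq 0$: the boundary data of $h$ is $\leq 0$, so $h \leq 0$ by the maximum principle, and $\Gamma \leq G \leq 0$ gives $|G| \leq |\Gamma| = c_n |x-y|^{2-n}$. For $n=2$, consider $\psi(y) := G(x,y) - \Gamma(x-y) + \frac{1}{2\pi}\ln \diam(\Omega)$; it is harmonic in $y \in \Omega \setminus \{x\}$ with nonnegative boundary values on $\partial\Omega$ (since $|x-y|\leq \diam(\Omega)$ there), so $\psi \geq 0$, which combined with $G \leq 0$ gives (iii) after adjusting constants.

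For (iv), the crux is a boundary decay estimate: since $\partial\Omega$ is $C^2$, for any $y_0 \in \partial\Omega$ and any $u$ harmonic in $\Omega \cap B(y_0,r)$ vanishing on $\partial\Omega \cap B(y_0,r)$, a Hopf-type barrier gives $|u(y)| \leq (C/r)\,\delta(y)\,\sup_{\Omega \cap B(y_0,r)}|u|$ for $y \in \Omega \cap B(y_0,r/2)$, with $C$ depending only on $\Omega$. Applying this to $G(x,\cdot)$ with $r$ comparable to $|x-y|$ and bounding the sup by (iii) yields $|G(x,y)| \leq C|x-y|^{1-n}\delta(y)$ when $\delta(y) \leq |x-y|/2$; (iii) directly handles $\delta(y) > |x-y|/2$ (absorbing a $\diam(\Omega)$ factor). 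Symmetry then converts this into (iv).

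Estimates (v) and (vi) follow by combining (iii)--(iv) with interior and up-to-boundary derivative estimates for harmonic functions. One applies the standard gradient estimate on a ball $B(y,\rho)$ with $\rho$ comparable to $\min(|x-y|,\delta(y))$ (or a boundary $C^{1,\alpha}$ Schauder estimate when $y$ is close to $\partial\Omega$) and controls the sup by (iii) or (iv) according to whether $\delta(x)$ is comparable to $|x-y|$ or not; this gives (v) in both regimes. For (vi), one iterates: $\nabla_y G(\cdot,y)$ is harmonic in $x$ on $\Omega \setminus \{y\}$, so a further gradient estimate in $x$ combined with (v) yields the bound. The main obstacle is to make the case analysis uniform in $(x,y)$ with constants depending only on $\Omega$; this requires covering $\partial \Omega$ by finitely many $C^2$-graph neighborhoods in which the barrier constructions and Schauder estimates admit uniform constants, so that the resulting estimates extend to all $(x,y) \in (\Omega \times \Omega) \setminus \diag$ without implicit dependence on the position of $x$ or $y$.
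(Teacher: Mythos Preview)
Your proposal is correct and follows essentially the same route as the paper: a barrier/maximum-principle argument for (iv), interior gradient estimates combined with (iii)--(iv) for (v), and iteration in the other variable for (vi). One minor simplification the paper makes that you may find useful: boundary Schauder estimates are never needed in (v) --- when $\delta(y)\le |x-y|$, the interior gradient estimate on $B\bigl(y,\tfrac12\delta(y)\bigr)$ together with the $\delta$-decay already obtained in (iv) (via symmetry) gives $|\nabla_y G(x,y)|\lesssim \delta(y)^{-1}\cdot |x-y|^{1-n}\delta(y)=|x-y|^{1-n}$, so the purely interior estimate suffices throughout and the final constants depend only on the exterior-sphere radius of $\Omega$.
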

    \begin{proof}
    Existence of $G$, and properties (i),(ii),(iii) are well known. (iv) and (v) and (vi) are also known for the case $n\geq 3$, see 
    \cite[Theorem 3.3]{Widman}, but we contain a complete proof here since this method will be used to get the uniform estimate of Green functions for the Laplace operator
    on manifolds. We may assume $n\geq 3$, the case $n=2$ can be handled similarly.\\
    (iv) Since $\Omega$ is bounded and has $C^2$-boundary, then it satisfies the uniform exterior sphere condition,
        i.e., there is $r_0>0$ satisfying for any $z\in\partial\Omega$ and any $0<r<r_0$, there is an open ball $B^z(r)$ with radius $r$ such that 
        $$B^z(r)\subset\mr^n\setminus\overline{\Omega},\ \overline{B^z(r)}\cap \partial\Omega=\{z\}.$$ 
        Fix such an $r_0$ and $(x,y)\in(\Omega\times\Omega)\setminus\diag$, then we consider two cases.\\
    {\bf Case 1:} $\delta(x)<\min\left\{\frac{|x-y|}{8},r_0\right\}$.\\
    \indent Set $r:=\min\left\{\frac{|x-y|}{8},r_0\right\}$. Choose $z_x\in\partial\Omega$ and choose $x^*\in\mr^n\setminus\overline{\Omega}$
     such that 
     $$|x-z_x|=\delta(x),\ B^{z_x}(r)=B(x^*,r).$$
     Set 
    $$u(z):=
      \frac{2^n}{4-2^n}\left[\left(\frac{r}{|z-x^*|}\right)^{n-2}-1\right],\ \forall\ z\in \mr^n\setminus\{x^*\},$$
    then we know 
    $$
    \left\{\begin{array}{ll}
      \Delta u=0&\text{ in }\mr^n\setminus\{x^*\},\\
       u=0 & \text{ on }\partial B(x^*,r),\\
       u=1 & \text{ on }\partial B(x^*,2r).
    \end{array}\right.
    $$ Moreover, it is clear that 
    $$\sup_{z\in\mr^n\setminus B(x^*,r)}|\nabla u(z)|\leq 
      \frac{2^n(n-2)}{(2^n-4)r}\leq \frac{n}{r}.$$
    As $z_x\in\partial B(x^*,r)$, then by the mean value theorem, we know  
    $$u(x)=|u(x)-u(z_x)|\leq \frac{n|x-z_x|}{r}=\frac{n\delta(x)}{r}.$$
    For any $z\in\partial B(x^*,2r)\cap\Omega$, we have 
    $$|z_x-z|\leq |z_x-x^*|+|x^*-z|=3r,\ |x-z|\leq |x-z_x|+|z_x-z|\leq 4r,$$
    $$|z-y|\geq |x-y|-|x-z|\geq |x-y|-4r=\frac{|x-y|}{2},$$
    so by (iii), we know 
    $$
    |G(z,y)|\leq |z-y|^{2-n}\leq 2^{n-2}|x-y|^{2-n}\leq 2^{n}|x-y|^{2-n}u(z).
    $$
    Note that for any $z\in \partial\Omega$, we have $G(z,y)=0$. Applying the maximum principle to the set 
    $\Omega\cap (B(x^*,2r)\setminus \overline{B(x^*,r)})\ni x$, we know 
    $$|G(x,y)|\leq 2^{n}|x-y|^{2-n}u(x)\leq n2^n|x-y|^{2-n}\frac{\delta(x)}{r},$$
    then we have
    \begin{equation}\label{gradient:eequ1}
     |G(x,y)|\leq n2^n\left(8+\frac{\diam(\Omega)}{r_0}\right)|x-y|^{1-n}\delta(x). 
    \end{equation}
    {\bf Case 2:} $\delta(x)\geq \min\left\{r_0,\frac{|x-y|}{8}\right\}$.\\
    \indent In this case, we have 
    $$\frac{|x-y|}{\delta(x)}\leq 8+\frac{\diam(\Omega)}{r_0},$$
    so by (iii) we know
    \begin{equation}\label{gradient:eequ2}
    |G(x,y)|\leq |x-y|^{2-n}\leq \left(8+\frac{\diam(\Omega)}{r_0}\right)|x-y|^{1-n}\delta(x).
    \end{equation}
    \indent  Combining {\bf Case 1} and {\bf Case 2}, especially Inequalities (\ref{gradient:eequ1}) and (\ref{gradient:eequ2}),
    we know (iv) holds with 
    $$C:=n2^n\left(8+\frac{\diam(\Omega)}{r_0}\right).$$
    (v) We only prove 
    $$|\nabla_y G(x,y)|\leq C|x-y|^{1-n}$$
    for some constant $C>0.$ Since the other inequality can be derived as (iv) was implied by (iii)
    if we note that for fixed $y\in\Omega$, $\nabla_y G(\cdot,y)$ is a solution of $\Delta u=0$ in $\Omega\setminus\{y\}$. Fix $x,y\in\Omega,\ x\neq y$, we also consider two cases.\\
    {\bf Case 1:} $\delta(y)\leq |x-y|$.\\
    \indent In this case, we know $G(x,\cdot)$ is a harmonic function in $B\left(y,\frac{1}{2}\delta(y)\right)$. By the following well known Lemma 
    \ref{gradient estimate 3}, we know 
    $$|\nabla_y G(x,y)|\leq \frac{n}{\frac{1}{2}\delta(y)}\sup_{B\left(y,\frac{1}{2}\delta(y)\right)}|G(x,\cdot)|.$$
    For any $z\in B\left(y,\frac{1}{2}\delta(y)\right)$, we have 
    $$|x-z|\geq |x-y|-|y-z|\geq |x-y|-\frac{1}{2}\delta(y)\geq \frac{1}{2}|x-y|,$$
    $$\delta(z)\leq \delta(y)+|y-z|\leq 2\delta(y).$$ 
    By (iv), we know 
    \begin{align*}
    |G(x,z)|&\leq n2^n\left(8+\frac{\diam(\Omega)}{r_0}\right)|x-z|^{1-n}\delta(z)\\
         &\leq n2^{2n-1}\left(8+\frac{\diam(\Omega)}{r_0}\right)|x-y|^{1-n}\delta(y).
    \end{align*}
    Thus, we have 
    \begin{equation}\label{gradient:eequ3}
    |\nabla_y G(x,y)|\leq n4^{n}\left(8+\frac{\diam(\Omega)}{r_0}\right)|x-y|^{1-n}.
    \end{equation}
    {\bf Case 2:} $\delta(y)>|x-y|$. \\
    \indent In this case, we know $G(x,\cdot)$ is a harmonic function in $B\left(y,\frac{1}{2}|x-y|\right)$. By Lemma \ref{gradient estimate 3}, we get 
    $$|\nabla_y G(x,y)|\leq \frac{n}{\frac{1}{2}|x-y|}\sup_{B\left(y,\frac{1}{2}|x-y|\right)}|G(x,\cdot)|.$$
    For any $z\in B\left(y,\frac{1}{2}|x-y|\right)$, we have 
    $$|x-z|\geq |x-y|-|y-z|\geq \frac{1}{2}|x-y|,$$
    By (iii), we have 
    $$|G(x,z)|\leq |x-z|^{2-n}\leq 2^{n-2}|x-y|^{2-n}.$$
    Thus, we get 
    \begin{equation}\label{gradient:eequ4}
    |\nabla_y G(x,y)|\leq n2^n|x-y|^{1-n}.
    \end{equation}
    \indent Combining Inequalities (\ref{gradient:eequ3}) and (\ref{gradient:eequ4}), we know the  
    $$|\nabla_y G(x,y)|\leq n4^{n}\left(8+\frac{\diam(\Omega)}{r_0}\right)|x-y|^{1-n}.$$
    (iv) Note that for fixed $y\in\Omega$, $\nabla_y G(\cdot,y)$ is a solution of $\Delta u=0$ in $\Omega\setminus\{y\}$.
     Now (iv) follows from (v) as (v) was implied by (iii) and (iv). 
    \end{proof}
    The following lemma is well known.
    \begin{lem}\label{gradient estimate 3}\cite[Theorem 2.10]{Gilbarg}
    Let $\Omega\subset\mr^n$, and let $u\in C^2(\Omega)$ be a harmonic function, then for any $x\in\Omega$, we have 
    $$|\nabla u(x)|\leq \frac{n}{\delta(x)}\sup_{\Omega}|u|.$$
    \end{lem}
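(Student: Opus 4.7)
The plan is to apply the mean value property of harmonic functions to their partial derivatives. Since $u\in C^2(\Omega)$ is harmonic, elliptic regularity (or direct verification) gives $u\in C^\infty(\Omega)$, and differentiation commutes with $\Delta$, so each $\partial_i u$ is harmonic in $\Omega$ as well. Fix $x\in \Omega$ and choose any $0<r<\delta(x)$, so that $\overline{B(x,r)}\subset \Omega$. The mean value property applied to the harmonic function $\partial_i u$ on $B(x,r)$ gives
$$\partial_i u(x)=\frac{1}{|B(x,r)|}\int_{B(x,r)}\partial_i u\, dy.$$

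Next I would rewrite the right-hand side as a boundary integral via the divergence theorem (using $\partial_i u=\operatorname{div}(u e_i)$, where $e_i$ is the $i$-th coordinate vector):
$$\partial_i u(x)=\frac{1}{|B(x,r)|}\int_{\partial B(x,r)} u\, \nu_i\, dS,$$
with $\nu$ the outward unit normal. Pulling out the supremum of $|u|$ over $\Omega$ yields
$$|\partial_i u(x)|\leq \frac{|\partial B(x,r)|}{|B(x,r)|}\sup_\Omega|u|=\frac{n}{r}\sup_\Omega |u|,$$
since the ratio of surface area to volume of a Euclidean $n$-ball of radius $r$ is $n/r$. To upgrade this componentwise bound to a bound on $|\nabla u(x)|$, I apply the same argument to the directional derivative $\partial_e u$ in the unit direction $e:=\nabla u(x)/|\nabla u(x)|$ (when $\nabla u(x)\neq 0$; otherwise the conclusion is trivial); since $\partial_e u$ is also harmonic, the same calculation yields $|\nabla u(x)|\le (n/r)\sup_\Omega|u|$.

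Finally, letting $r\nearrow \delta(x)$ gives the desired estimate. There is no real obstacle here: the argument is a textbook application of the mean value property combined with the divergence identity, and the only subtlety is the harmless step of letting $r$ approach $\delta(x)$, which is justified because both sides depend continuously on $r$ and $\sup_{B(x,r)}|u|\le \sup_\Omega|u|$ for every admissible $r$.
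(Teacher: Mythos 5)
Your proof is correct, and it is precisely the standard argument (mean value property for $\partial_i u$, divergence theorem, volume-to-surface ratio $n/r$, applied in the direction of $\nabla u(x)$, then $r\nearrow\delta(x)$) that appears in \cite[Theorem 2.10]{Gilbarg}, which the paper simply cites without reproducing the proof. No discrepancies.
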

    Using a coordinate change, we may get a direct consequence of Theorem \ref{estimate:euclidean}, which will be used in the proof of Lemma \ref{gradient estimate 2}. 
    This also follows from Theorem 3.3 of \cite{Widman}.
    \begin{cor}\label{cor:euclidean green}
    Let $\Omega\subset\mr^n(n\geq 2)$ be a bounded open subset with $C^2$-boundary, let $L:=\sum_{i,j=1}^na_{ij}\frac{\partial^2}{\partial x_i\partial x_j}$
    be a constant coefficient elliptic operator, i.e. $(a_{ij})_{1\leq i,j\leq n}$ is a symmetric positive definite matrix, and let $G$ be the corresponding Green function 
    for $L$. Then there is a constant $C:=C(\Omega,L)>0$ such that
    $$|G(x,y)|\leq C\cdot\left\{\begin{array}{ll}
                  |x-y|^{2-n} & \text{ if }n\geq 3,\\
                  1+|\ln|x-y||  & \text{ if }n=2,
                  \end{array}\right.$$
    $$ |\nabla_y G(x,y)|\leq C\cdot\left\{\begin{array}{ll}
                  |x-y|^{1-n} & \text{ if }n\geq 3,\\
                  \frac{1+|\ln|x-y||}{|x-y|} & \text{ if }n=2,
                  \end{array}\right.$$
    $$|\nabla_x\nabla_y G(x,y)|\leq C\cdot\left\{\begin{array}{ll}
                  |x-y|^{-n} & \text{ if }n\geq 3,\\
                  \frac{1+|\ln|x-y||}{|x-y|^2}  & \text{ if }n=2.
                  \end{array}\right.$$ 
    \end{cor}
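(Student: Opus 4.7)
The plan is to reduce the case of a general constant-coefficient elliptic operator $L$ to the Laplacian by a linear change of variables and then invoke Theorem \ref{estimate:euclidean}. Since $A:=(a_{ij})$ is symmetric positive definite, I will write $A=TT^T$ with $T$ invertible (for instance $T=A^{1/2}$), and introduce new coordinates $\tilde x=T^{-1}x$. A direct computation with the chain rule shows that if $v(\tilde y):=u(T\tilde y)$, then $Lu(T\tilde y)=\Delta v(\tilde y)$, so in the $\tilde y$-variables the operator $L$ becomes the Laplacian on the transformed domain $\tilde\Omega:=T^{-1}(\Omega)$, which is again bounded and has $C^2$-boundary.

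Next I would track how the Green function transforms. Writing the defining equation $L_yG(x,y)=\delta_x(y)$ in $\tilde y$-coordinates and using the Jacobian $|\det T|$ for the Dirac mass, the function
\[
\tilde G(\tilde x,\tilde y):=|\det T|\,G(T\tilde x,T\tilde y)
\]
solves $\Delta_{\tilde y}\tilde G(\tilde x,\tilde y)=\delta_{\tilde x}(\tilde y)$ on $\tilde\Omega$ and vanishes on $\partial\tilde\Omega$; hence $\tilde G$ is the Laplacian Green function for $\tilde\Omega$. Theorem \ref{estimate:euclidean} then provides the three desired pointwise estimates for $\tilde G$, $\nabla_{\tilde y}\tilde G$ and $\nabla_{\tilde x}\nabla_{\tilde y}\tilde G$ on $\tilde\Omega\times\tilde\Omega\setminus\diag$.

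The final step is to translate these estimates back to $G$. The key observation is that $T$ is a fixed linear isomorphism, so there are constants $c_1,c_2>0$ with
\[
c_1|x-y|\leq |T^{-1}(x-y)|=|\tilde x-\tilde y|\leq c_2|x-y|.
\]
For the function estimate this immediately gives $|G(x,y)|\le C|x-y|^{2-n}$ when $n\geq 3$, and in the case $n=2$ one absorbs the additive constant $|\ln c_i|$ into $C$ to get the logarithmic bound. For the gradient estimate, the chain rule gives $\nabla_y G=\tfrac{1}{|\det T|}(T^T)^{-1}\nabla_{\tilde y}\tilde G$, and a similar identity holds for $\nabla_x\nabla_y G$; the operator norms of $(T^T)^{-1}$ and $T^{-1}$ are fixed constants depending only on $L$, so one obtains the bounds in the statement. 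There is no real obstacle here: the content is entirely in Theorem \ref{estimate:euclidean}, and the present corollary is a bookkeeping exercise showing that the estimates are preserved under a fixed linear change of coordinates.
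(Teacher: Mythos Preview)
Your proposal is correct and follows exactly the approach indicated in the paper, which simply says ``Using a coordinate change, we may get a direct consequence of Theorem \ref{estimate:euclidean}.'' You have carried out precisely this linear change of variables $A=TT^T$, $\tilde x=T^{-1}x$, verified that $\tilde G(\tilde x,\tilde y)=|\det T|\,G(T\tilde x,T\tilde y)$ is the Laplacian Green function on $\tilde\Omega$, and transferred the estimates back using the comparability $|\tilde x-\tilde y|\asymp |x-y|$; nothing more is needed.
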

    \section{Gradient estimates of harmonic functions}
    In this section, using the method of freezing coefficients, we will prove a key lemma, which will play a crucial role in the proof of Theorem 
    \ref{thm:Green function}.
    \begin{lem}\label{gradient estimate 2}
    Let $(M,g)$ be a compact Riemannian manifold without boundary (of dimension $\geq 2$), $r_0>0$ be the injective radius of $M$,
    and let $0<4r<r_0.$ Fix $x_0\in M$, and set $E:=B(x_0,2r)\setminus\overline{B(x_0,r)}$. Suppose $u\in C^2(\overline{E})$ satisfies 
    $$
    \left\{\begin{array}{ll}
      \Delta u=0&\text{ in }E,\\
       u=0 & \text{ on }\partial B(x_0,r),\\
       u=1 & \text{ on }\partial B(x_0,2r),
    \end{array}\right.
    $$
    then there is a constant $C:=C(M,g)>0$ such that 
    $$\sup_{E}|\nabla u|\leq \frac{C}{r}.$$ 
    \end{lem}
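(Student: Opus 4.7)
The plan is to prove Lemma \ref{gradient estimate 2} by a rescaling and freezing-coefficients argument, reducing the problem to a uniform gradient bound for a uniformly elliptic equation on the fixed Euclidean annulus $A=\{y\in\mr^n:1<|y|<2\}$.

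First, I would work in geodesic normal coordinates on $B(x_0,r_0)$ centered at $x_0$. In these coordinates one has $g_{ij}(0)=\delta_{ij}$, $g_{ij}(x)=\delta_{ij}+O(|x|^2)$, and Christoffel symbols of size $O(|x|)$; moreover, by the compactness of $M$, the bounds on $g_{ij}$, $g^{ij}$ and their derivatives of any fixed order inside the normal coordinate ball of radius $r_0$ are uniform in the base point $x_0$. Writing the Laplacian in these coordinates gives
$$\Delta = g^{ij}(x)\partial_i\partial_j + b^k(x)\partial_k,$$
with $b^k$ built from $g^{ij}$ and the Christoffel symbols, and $\|g^{ij}\|_{C^1}$, $\|b^k\|_{C^0}$ controlled uniformly in $x_0$.

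Next I rescale. Define $v(y):=u(ry)$ for $y$ in the Euclidean annulus $A$; then $v\in C^2(\overline A)$, $v=0$ on $\{|y|=1\}$, $v=1$ on $\{|y|=2\}$, and $v$ satisfies
$$\tilde a^{ij}(y)\,\partial_i\partial_j v + \tilde b^k(y)\,\partial_k v = 0\quad\text{in }A,$$
where $\tilde a^{ij}(y)=g^{ij}(ry)$ and $\tilde b^k(y)=r\,b^k(ry)$. Since $|ry|\le 2r<r_0/2$, the normal-coordinate expansion gives $|\tilde a^{ij}(y)-\delta^{ij}|\le Cr^2$ and $|\tilde b^k(y)|\le Cr^2$, so for all admissible $r$ the operator on $A$ is uniformly elliptic with ellipticity constant close to $1$ and with $C^0$-norm of coefficients bounded by an absolute constant. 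The maximum principle gives $0\le v\le 1$.

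Now I apply a standard boundary $C^{1,\alpha}$ Schauder estimate on the smooth domain $\overline A$ (or equivalently the integral representation via the Green function from Corollary \ref{cor:euclidean green} applied to a $C^{1,\alpha}$ perturbation of the constant-coefficient operator $\delta^{ij}\partial_i\partial_j$, which one may treat as a perturbation of the Laplacian since $\|\tilde a^{ij}-\delta^{ij}\|_{C^0}$ and $\|\tilde b^k\|_{C^0}$ are small). This yields $\|v\|_{C^1(\overline A)}\le C$ for a constant $C$ independent of $x_0$ and $r$. Rescaling back, $\partial_i u(ry)=r^{-1}\partial_i v(y)$, and because in normal coordinates the metric $g$ is comparable to the Euclidean one uniformly, $|\nabla u|_g\le C'\,r^{-1}\sup_A|\nabla v|\le C''/r$ on $E$.

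The main obstacle is purely bookkeeping: one must verify that every constant appearing along the way (ellipticity bounds, $C^0$-bounds on the rescaled coefficients, the Schauder/Green-function constant on $A$, and the comparison between the Riemannian and Euclidean gradients) is independent of both the base point $x_0$ and the scale $r\in(0,r_0/4)$. This independence is delivered by the compactness of $M$ (giving uniform control of the normal-coordinate metric on balls of radius $r_0$) together with the fact that $r$ is strictly smaller than $r_0/4$, which forces the rescaled coefficients to live in a relatively compact family.
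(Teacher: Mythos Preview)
Your approach is correct. One small bookkeeping point: to invoke the global Schauder estimate on $\overline A$ you need $\tilde a^{ij}\in C^\alpha(\overline A)$, not merely $C^0$; this holds since $\partial_k\tilde a^{ij}(y)=r\,\partial_k g^{ij}(ry)$ is uniformly bounded on $\overline A$, so the H\"older norms of the rescaled coefficients are controlled independently of $x_0$ and $r$.

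The paper takes a related but more hands-on route. Rather than rescaling to a fixed annulus and citing Schauder, it works on $E$ directly: it picks a point $y_0\in\overline E$ with $|\nabla u(y_0)|\geq\tfrac12\sup_E|\nabla u|=:\tfrac{N}{2}$, freezes the principal part at $y_0$ to obtain the constant-coefficient operator $L=\sum_{i,j} g^{ij}(y_0)D_{ij}$, and represents $u\eta$ (with $\eta$ a cutoff of scale $rs$ centered at $y_0$, $0<s<1$ to be chosen) via the Green function of $L$ on $E$, using the pointwise bounds of Corollary~\ref{cor:euclidean green}. Differentiating the representation at $y_0$, integrating by parts, and estimating the commutator and lower-order terms yields an inequality of the form $\tfrac{N}{2}\le C\bigl(Nrs+1+\tfrac{1}{rs}\bigr)$; choosing $s$ to be a fixed small constant (depending only on $(M,g)$) absorbs the $Nrs$ term and gives $N\le C/r$. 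Your rescaling argument is shorter and more conceptual, while the paper's argument is more self-contained, relying only on the explicit Green-function bounds of Corollary~\ref{cor:euclidean green} rather than on Schauder theory as a black box. Note also that your parenthetical alternative---using Corollary~\ref{cor:euclidean green} directly on $A$---still requires a perturbation argument to pass from the constant-coefficient Green function to the variable-coefficient equation, which is exactly what the paper carries out in detail.
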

    \begin{proof}
    The proof is inspired by the proof of \cite[Lemma 3.2]{Widman}. We may assume $\dim(M)\geq 3$, as the case $\dim(M)=2$ is similar
    if we note that there is a constant $C:=C(\diam(M))>0$ such that 
    $$1+|\ln t|\leq \frac{C}{t^{\frac{1}{2}}}\text{ for all } 0<t\leq \diam(M).$$
     By the maximum principle, we know $\sup_{x\in E}|u(x)|\leq 1$.
    Choose a local coordinate $(x_1,\cdots,x_n)$ on $B(x_0,4r),$  and set 
    $$D_i:=\frac{\partial}{\partial x_i},\ D_{ij}:=\frac{\partial^2}{\partial x_i\partial x_j},\
     g=\sum_{i,j} g_{ij} dx_i\otimes dx_j.$$ 
    If there may be some confusions, we use $D_{x_i}$ to replace $D_i$. Let $N:=\sup_{E}|\nabla u|$, and choose $y_0\in E$ such that $|\nabla u(y_0)|\geq \frac{N}{2}.$
    Choose a cut-off function $\chi\in C_c^\infty(\mr)$ such that
    $$0\leq \chi\leq 1,\ \chi|_{B(0,\frac{1}{2})}=1,\ \supp(\chi)\subset B(0,1),\ |\chi'|\leq 2,\ |\chi''|\leq 2.$$ 
    Set 
    $$\eta(x):=\chi\left(\frac{d(y_0,x)}{rs}\right),\ \forall x\in M,$$
    where $0<s<1$ to be determined later. Note that $\overline{B(y_0,r)}\subset B(x_0,3r)$. It is clear that 
    $$\sup_{B(y_0,r)}|\nabla d(y_0,\cdot)|\leq 1.$$
    Using the Hessian comparison theorem (e.g. see the first answer of \url{https://math.stackexchange.com/questions/1499968/}), there is a constant $C_1:=C_1(M,g)>0$ (in the following, when we say a number is a constant,
    we mean it only depends on $M,g$) such that 
    $$\sum_{i,j}|D_{ij}d(y_0,x)|\leq C\left(\frac{1}{d(y_0,x)}+1\right),\ \forall x\in B(y_0,r),$$ 
    so there is a constant $C_2>0$ such that 
    $$\sup_{M}\sum_i|D_i\eta|\leq \frac{C_2}{rs},\ \sup_{M}\sum_{i,j}|D_{ij}\eta|\leq \frac{C_2}{(rs)^2}.$$
    \indent We may assume $u\eta=0,\ uD_i\eta=0$ near $\partial E$ for any $i$. If $d(y_0,x_0)\leq \frac{3r}{2}$, this is obvious.
    If $d(y_0,x_0)>\frac{3r}{2}$, we may replace $u$ by $1-u$. This is a solution of the equation $\Delta u=0$ but with boundary 
    values $1$ on $\partial B(x_0,r)$ and $0$ on $\partial B(x_0,2r)$, and we again have 
    $$\sup_{E}|\nabla(1-u)|=N,\ |\nabla(1-u)(y_0)|\geq \frac{N}{2}.$$
    \indent Let $G$ be the Green function in $E$ corresponding to the operator 
    $$L:=\sum_{i,j}g^{ij}(y_0)D_{ij},$$
    so we have 
    $$u(x)\eta(x)=\int_{E}G(x,y)L(u\eta)(y)dV(y),\ \forall\ x\in E.$$
    Differentiating the above equality, for any $k$ and any $x\in E$,  we have 
    \begin{align}\label{gradient:equ1}
    &\eta(x)D_ku(x)+u(x)D_k\eta(x)\nonumber\\
    =&\sum_{i,j}\int_{E}D_{x_k}G(x,y)g^{ij}(y_0)D_{ij}(u\eta)(y)dV(y)\\
    =&\sum_{i,j}\int_{E}D_{x_k}G(x,y)D_{i}\left(\left(g^{ij}(y_0)-g^{ij}(y)\right)D_j(u\eta)\right)(y)dV(y)\nonumber\\
    &+\sum_{i,j}\int_{E}D_{x_k}G(x,y)D_{i}\left(g^{ij}(y)D_j(u\eta)\right)(y)dV(y).\nonumber
    \end{align}
    \indent By Corollary \ref{cor:euclidean green}, we may choose a constant $C_3>0$ such that   
    $$|G(x,y)|\leq C_3d(x,y)^{2-n},\ \sum_k|D_{x_k}G(x,y)|\leq C_3d(x,y)^{1-n},$$
    $$\sum_{k,l}|D_{y_l}D_{x_k}G(x,y)|\leq C_3d(x,y)^{-n},\ \forall x,y\in\overline E,\ x\neq y.$$
    Set 
    $$b_i:=\sum_j g^{ij}D_j\sqrt{\det g}.$$
    There also exists a constant $C_4>0$ such that 
    $$\sum_{i,j}|g^{ij}(y_0)-g^{ij}(x)|\leq C_4d(y_0,x),\ \forall x\in\overline{E}\setminus\{y_0\};$$
    $$\frac{1}{C_4^2}\delta^{ij}\leq g^{ij}(x)\leq C_4^2\delta^{ij},\ \sum_i|b_i(x)|\leq C,\sum_{i,j}|g^{ij}(x)|\leq C_4,$$
     $$\sqrt{\det g(x)}\leq C_4,\ \sum_{i,j}|D_i(g^{ij}(x))|\leq C,\ \sum_i D_i\sqrt{\det g(x)}\leq C,\ \forall x\in E.$$
    Since 
    $$|\nabla u|^2=g^{ij}\frac{\partial u}{\partial x_i}\frac{\partial u}{\partial x_j},$$
    then we have 
    $$\sum_j\sup_{E}|D_ju|\leq C_4N.$$
    Set $C:=\max\{C_1,C_2,C_3,C_4\}.$\\
    \indent Since 
    $$\int_{\partial  B(y_0,\epsilon)}d(y_0,y)^{2-n}dS(y)\rw 0\text{ as }\epsilon\rw 0+,$$
    then we know 
    $$\int_{\partial B(y_0,\epsilon)}\sqrt{\det g(y)}D_{x_k}G(x,y)|_{x=y_0}(g^{ij}(y_0)-g^{ij}(y))D_j(u\eta)(y)dS(y)\rw 0$$
    whenever $\epsilon\rw 0+.$ Let $d\lambda$ be the usual Lebesgue measure on $B(x_0,4r)$ (which has been identified with its image by the above chosen coordinate).
    Thus, using integration by parts and the fact $D_j(u\eta)=0$ on $\partial E$, for any $k$ and any $x\in E$, we get 
    \begin{align*}
    &\sum_{i,j}\left.\int_{E}D_{x_k}G(x,y)D_{i}((g^{ij}(y_0)-g^{ij}(y))D_j(u\eta))(y)dV(y)\right|_{x=y_0}\\
    =&-\sum_{i,j}\left. \int_{E}D_{y_i}(\sqrt{\det g(y)}D_{x_k}G(x,y))(g^{ij}(y_0)-g^{ij}(y))D_j(u\eta)(y)d\lambda(y)\right|_{x=y_0}\\
    =&-\sum_{i,j} \int_{E}D_{y_i}(\sqrt{\det g(y)}D_{x_k}G(x,y)|_{x=y_0})(g^{ij}(y_0)-g^{ij}(y))D_j(u\eta)(y)d\lambda(y).
    \end{align*}
    Note that $\supp(\eta)\subset B(y_0,rs)$, then there is a constant $C_5>0$ such that  
    \begin{align}\label{gradient:equ2}
    &\left|\sum_{i,j}\left.\int_{E}D_{x_k}G(x,y)D_{i}((g^{ij}(y_0)-g^{ij}(y))D_j(u\eta))(y)dV(y)\right|_{x=y_0}\right|\nonumber\\
    \leq & \int_{B(y_0,rs)}C^2(d(y_0,y)^{1-n}+d(y_0,y)^{-n})d(y_0,y)\left(CN+\frac{C}{rs}\right)d\lambda(y)\\
    \leq &C_5(Nrs+1).\nonumber
    \end{align}
    \indent Since 
    $$0=\Delta u=\sum_{i,j}\frac{1}{\sqrt{\det g}}\frac{\partial }{\partial x_i}(g^{ij}\frac{\partial u}{\partial x_j}\sqrt{\det g})\text{ in }E,$$
    we have 
    $$\sum_{i,j}D_i(g^{ij}D_ju)=-\sum_{i,j}g^{ij}D_j\sqrt{\det g}D_iu=-\sum_ib^iD_iu\text{ in }E.$$
    We also have 
    $$\sum_{i,j}D_i(g^{ij}D_j(u\eta))=\sum_{i,j}(D_i(g^{ij}D_ju)\eta+D_i(g^{ij}D_j\eta)u)+2\sum_{i,j}g^{ij}D_j\eta D_iu,$$
    so for any $k$ and any $x\in E$, we get 
    \begin{align*}
    &\sum_{i,j}\int_{E}D_{x_k}G(x,y)D_{i}(g^{ij}D_j(u\eta))(y)dV(y)\\
    =&-\sum_i\int_E D_{x_k}G(x,y)b^i(y)D_iu(y)\eta(y)dV(y)\\
    &+\sum_{i,j}\int_E D_{x_k}G(x,y)D_i(g^{ij}D_j\eta)(y)u(y)dV(y)\\
    & +2\sum_{i,j}\int_ED_{x_k}G(x,y)g^{ij}(y)D_j\eta(y) D_iu(y)dV(y).
    \end{align*}
    Since $uD_j\eta =0$ on $\partial E$ and $D_j\eta=0$ in $B(y_0,\frac{rs}{2})$, then for any $k$ and any $x\in E$, we have  
    \begin{align*}
    &\sum_{i,j}\int_ED_{x_k}G(x,y)g^{ij}(y)D_j\eta(y) D_iu(y)dV(y)\\
    =& \sum_{i,j}\lim_{\epsilon\rw 0+}\int_{E\setminus B(y_0,\epsilon)}D_{x_k}G(x,y)g^{ij}(y)D_j\eta(y) D_iu(y)\sqrt{\det g(y)}d\lambda(y)\\
    =&-\sum_{i,j}\lim_{\epsilon\rw 0+}\int_{E\setminus B(y_0,\epsilon)}D_{y_i}(\sqrt{\det g(y)}D_{x_k}G(x,y)g^{ij}(y)D_j\eta(y))u(y)d\lambda(y)\\
    =&-\sum_{i,j}\int_{E}D_{y_i}\left(\sqrt{\det g(y)}D_{x_k}G(x,y)g^{ij}(y)D_j\eta(y)\right)u(y)d\lambda(y).
    \end{align*}
    Since $\supp(\eta)\subset B(y_0,rs)$, for any $k$, we then have 
    \begin{align*}
    &\sum_{i,j}\int_{E}D_{x_k}G(x,y)|_{x=y_0}D_{i}(g^{ij}D_j(u\eta))(y)dV(y)\\
    =& -\sum_i\int_E D_{x_k}G(x,y)|_{x=y_0}b^i(y)D_iu(y)\eta(y)dV(y)\\
    &+\sum_{i,j}\int_E D_{x_k}G(x,y)|_{x=y_0}D_i(g^{ij}D_j\eta)(y)u(y)dV(y)\\
    &-2\sum_{i,j}\int_{E}D_{y_i}\left(\sqrt{\det g(y)}D_{x_k}G(x,y)|_{x=y_0}g^{ij}(y)D_j\eta(y)\right)u(y)d\lambda(y).
    \end{align*}
    and then there is a constant $C_6>0$ such that  
    \begin{align}\label{gradient:equ3}
    &\left|\sum_{i,j}\int_{E}D_{x_k}G(x,y)|_{x=y_0}D_{i}(g^{ij}D_j(u\eta))(y)dV(y)\right|\\
    \leq&C^2N\int_{B(y_0,rs)}d(y_0,y)^{1-n}dV(y)+\int_{B(y_0,rs)}C^3\frac{d(y_0,y)^{1-n}}{rs}\left(1+\frac{1}{rs}\right)dV(y)\nonumber\\
    &+2\int_{B(y_0,rs)\setminus B(y_0,\frac{rs}{2})}C^4\frac{d(y_0,y)^{-n}}{rs}\left(2d(y_0,y)+1
    +d(y_0,y)\frac{1}{rs}\right)d\lambda(y)\nonumber\\
    \leq & C_6Nrs+C_6\left(1+\frac{1}{rs}\right)+C_6\int_{\frac{rs}{2}}^{rs}\frac{1}{rs}\left(2+\frac{1}{t}+\frac{1}{rs}\right)dt\nonumber\\
    =& C_6\left(Nrs+2+\frac{\frac{3}{2}+\ln2}{rs}\right)\nonumber.
    \end{align}
    Therefore, combining Formulas (\ref{gradient:equ1}), (\ref{gradient:equ2}) and (\ref{gradient:equ3}), there is a constant $C_7>0$ such that 
    $$\sum_k|D_{k}u(y_0)|\leq C_7\left(Nrs+1+\frac{1}{rs}\right).$$
    By the definition of gradient, we know there is a constant $C_8>0$ such that
    $$\frac{N}{2}\leq |\nabla u(y_0)|\leq C_8\left(Nrs+1+\frac{1}{rs}\right).$$ 
   Set $s:=\min\left\{\frac{1}{C_8r_0},1\right\}$, we know 
   $$N\leq \frac{C_8(1+4C_8)r_0}{r},$$ 
   and then the proof is complete. 
  \end{proof}
    \section{The proof of Theorem \ref{green function uniform}}
     In this section, we will give the proof of Theorem \ref{green function uniform}. Firstly, let us fix some notations. Let $(\overline M,g)$ be a compact Riemannian manifold with boundary (of dimension $n\geq 2$). It is clear that we may regard $M$ as a submanifold of a compact Riemannian manifold $M_0$ which has no boundary
     (for example, we may consider the double of $M$). The corresponding Riemannian metric on $M_0$ is also denoted by $g$.  Now we
     start to prove Theorem \ref{green function uniform}. For convenience, we restate it here.
     \begin{thm}[= Theorem \ref{green function uniform}]\label{thm:Green function}
     Let $G$ be the Green function of $\overline M$, then it has the following uniform estimate:
     \begin{itemize}
     \item[(i)] If $n\geq 3$, then there is a constant $C:=C(\overline M,M_0,g)>0$ such that for all $(x,y)\in (\overline M\times\overline M)\setminus\diag,$
     we have 
     $$ |G(x,y)|\leq Cd(x,y)^{2-n},\ |\nabla_y G(x,y)|\leq Cd(x,y)^{1-n}.$$
     \item[(ii)] If $n=2$, then there is a constant $C:=C(\overline M,M_0,g)>0$ such that for all $(x,y)\in (\overline M\times\overline M)\setminus\diag,$
     we have 
     $$ |G(x,y)|\leq C(1+|\ln d(x,y)|),$$
     $$|\nabla_y G(x,y)|\leq C(1+|\ln d(x,y)|)\frac{1}{d(x,y)}.$$
     \end{itemize}
     \end{thm}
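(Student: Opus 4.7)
The plan is to transport the strategy of Theorem \ref{estimate:euclidean} from bounded Euclidean domains to the manifold setting, with the Green function $G_0$ of the ambient closed manifold $M_0$ (Lemma \ref{green function:compact}) playing the role of the Euclidean Newtonian kernel and Lemma \ref{gradient estimate 2} replacing the explicit Euclidean barrier $\frac{2^n}{4-2^n}[(r/|z-x^*|)^{n-2}-1]$ used in the proof of Theorem \ref{estimate:euclidean}(iv). I describe the case $n\geq 3$; the case $n=2$ is completely parallel after substituting $1+|\ln d(x,y)|$ for $d(x,y)^{2-n}$ and $(1+|\ln d(x,y)|)/d(x,y)$ for $d(x,y)^{1-n}$. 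Set $\delta(x):=d(x,\partial M)$. Since $\partial M$ is a compact smooth hypersurface in $M_0$, fix $r_0>0$ smaller than the injectivity radius of $M_0$ such that $\partial M$ satisfies the uniform exterior sphere condition in $M_0$ at every radius $\leq r_0$: for each $z\in\partial M$ and each $0<r\leq r_0$, there is a geodesic ball $B(x^*,r)\subset M_0\setminus\overline M$ with $\overline{B(x^*,r)}\cap\partial M=\{z\}$.

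\emph{Pointwise bound.} Fix $x\in M$ and set $H(y):=-G(x,y)$ (nonnegative by Lemma \ref{green function:noncompact}) and $K(y):=-G_0(x,y)+L(x)$, where $L(x):=\sup_{z\in\partial M}G_0(x,z)$ is uniformly bounded by Lemma \ref{green function:compact}(iii). By standard elliptic theory there is $\phi_0\in C^\infty(\overline M)$ solving $\Delta\phi_0=1/|M_0|$ in $M$ with $\phi_0|_{\partial M}=0$, satisfying $|\phi_0|\leq C$ uniformly. The leading singularities of $H$ and $K$ at $x$ cancel, so $K-H$ extends continuously across $x$ and $(K-H)-\phi_0$ is harmonic on $M$ with boundary values $K|_{\partial M}=L(x)-G_0(x,\cdot)|_{\partial M}\geq 0$. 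The minimum principle for harmonic functions yields $K-H\geq\phi_0\geq -C$, hence $|G(x,y)|=H\leq K+C\leq-G_0(x,y)+L(x)+C\leq Cd(x,y)^{2-n}+C\leq C'd(x,y)^{2-n}$, using $d(x,y)\leq\diam(\overline M)$.

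\emph{Improved bound $|G(x,y)|\leq Cd(x,y)^{1-n}\delta(x)$.} If $\delta(x)\geq\min\{d(x,y)/8,r_0\}$ this follows directly from the pointwise bound. Otherwise take $r:=\min\{d(x,y)/8,r_0\}$, let $z_x\in\partial M$ realize $\delta(x)$, and let $x^*$ be the center of the exterior ball of radius $r$ tangent to $\partial M$ at $z_x$. On the annulus $E:=B(x^*,2r)\setminus\overline{B(x^*,r)}\subset M_0$, Lemma \ref{Aubin:Theorem 4.8} yields a smooth harmonic $u$ with $u=0$ on $\partial B(x^*,r)$ and $u=1$ on $\partial B(x^*,2r)$; Lemma \ref{gradient estimate 2} gives $\sup_E|\nabla u|\leq C/r$, so that $u(x)\leq C\delta(x)/r$ by integrating along a minimizing geodesic from $z_x$ to $x$ (one checks $d(x,x^*)=\delta(x)+r$, so $x\in E$). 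A triangle-inequality computation shows $d(\cdot,y)\geq d(x,y)/2$ on $\partial B(x^*,2r)\cap\overline M$, so the pointwise bound gives $|G(\cdot,y)|\leq Cd(x,y)^{2-n}=Cd(x,y)^{2-n}u(\cdot)$; on $\partial B(x^*,r)\cap\overline M=\{z_x\}$ both sides vanish, and on $\partial M\cap\overline E$ the left side vanishes while the right is nonnegative. The minimum principle applied to the harmonic function $Cd(x,y)^{2-n}u+G(\cdot,y)$ on $M\cap E$ (which omits $y$ since $d(y,x^*)\geq 3d(x,y)/4>2r$) yields $|G(x,y)|\leq Cd(x,y)^{2-n}u(x)\leq Cd(x,y)^{1-n}\delta(x)$.

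\emph{Gradient bound.} If $d(y,\partial M)\geq d(x,y)/8$, the ball $B(y,d(x,y)/16)$ lies in $M\setminus\{x\}$ and $-G(x,\cdot)$ is positive harmonic there with supremum $\leq Cd(x,y)^{2-n}$ by the pointwise bound; Lemma \ref{gradient estimate 1}, applied in $M_0$, then yields $|\nabla_yG(x,y)|\leq Cd(x,y)^{1-n}$. Otherwise $d(y,\partial M)<d(x,y)/8$; the symmetry $G(x,y)=G(y,x)$ together with the improved bound of the previous step (with the roles of $x$ and $y$ swapped) gives $|G(x,z)|\leq Cd(x,y)^{1-n}d(z,\partial M)$ on a neighborhood of $y$ of size $\sim d(x,y)/8$ on which $G(x,\cdot)$ is harmonic and vanishes on its intersection with $\partial M$. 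A standard up-to-the-boundary gradient estimate for harmonic functions (equivalently, a scaled application of Lemma \ref{gradient estimate 2} after flattening the boundary) then yields $|\nabla_yG(x,y)|\leq Cd(x,y)^{1-n}$. The main technical obstacle is the construction and deployment of the manifold barrier $u$ in the improved-bound step, for which Lemma \ref{gradient estimate 2} is the decisive ingredient; the remaining case analysis on the relative positions of $x$, $y$, $\partial M$, and $r_0$ is essentially bookkeeping.
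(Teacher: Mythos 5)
Your overall plan — use $G_0$ on the ambient closed manifold $M_0$ for the crude pointwise bound, an exterior-sphere barrier together with Lemma~\ref{gradient estimate 2} for the improved bound $|G(x,y)|\leq C\,d(x,y)^{1-n}\delta(x)$, and then Cheng--Yau for the gradient — is the same as the paper's. Your pointwise bound is organized differently (you compare $-G(x,\cdot)$ and $-G_0(x,\cdot)+L(x)$ directly, correcting the $1/|M_0|$ defect with a single function $\phi_0$ independent of $x$, whereas the paper cancels it with a globally constructed $\rho_3$ and builds a parametrix $\Gamma$ with $\Delta_y\Gamma=\delta_x$); both routes are valid, and yours is somewhat more streamlined. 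Your improved bound step matches the paper's.

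The genuine gap is in the final step, where $d(y,\partial M)<d(x,y)/8$. You invoke ``a standard up-to-the-boundary gradient estimate for harmonic functions (equivalently, a scaled application of Lemma~\ref{gradient estimate 2} after flattening the boundary).'' Lemma~\ref{gradient estimate 2} is a bound on $|\nabla u|$ for a very specific barrier $u$ on a metric annulus with boundary data $0$ and $1$; it does not give a boundary gradient estimate for an arbitrary harmonic function vanishing on $\partial M$, and ``flattening the boundary'' would require a separate argument (Schauder-type boundary regularity uniform over the compact $\partial M$). None of the lemmas stated in the paper supply this, so as written the step is unsupported. It is also unnecessary: with $\delta(y):=d(y,\partial M)$, apply the \emph{interior} Cheng--Yau estimate (Lemma~\ref{gradient estimate 1}) on the ball $B\bigl(y,\tfrac{1}{2}\delta(y)\bigr)\subset M\setminus\{x\}$, which never touches $\partial M$. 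For $z$ in this ball one has $d(x,z)\geq\tfrac{1}{2}d(x,y)$ and $\delta(z)\leq 2\delta(y)$, so the improved bound (applied, via symmetry $G(x,z)=G(z,x)$, with $z$ in the role of the variable whose distance to $\partial M$ appears) gives $\sup_{B(y,\frac{1}{2}\delta(y))}|G(x,\cdot)|\leq C\,d(x,y)^{1-n}\delta(y)$; dividing by the radius $\sim\delta(y)$ in Cheng--Yau makes the $\delta(y)$ cancel exactly and yields $|\nabla_yG(x,y)|\leq C\,d(x,y)^{1-n}$. This cancellation is precisely the point of the improved bound, and it keeps the whole gradient argument interior.
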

    \begin{proof}
      We may assume $n\geq 3$. By Lemma \ref{green function:compact}, we may choose a function $G_0\in C^\infty((M_0\times M_0)\setminus\diag)$ such that 
     there is a constant $C_1:=C_1(M_0,g)>0$ satisfying (in the following, when we say a number is a constant,
     we mean it only depends on $\overline M,M_0,g$) 
     $$\Delta_y^{\distr}G_0(x,y)=\delta_x(y)-\frac{1}{|M_0|},\ \forall (x,y)\in M_0\times M_0;$$
     $$-C_1d(x,y)^{2-n}\leq G_0(x,y)\leq C_1,\ \forall\  (x,y)\in(M_0\times M_0)\setminus\diag.$$
     Choose  two open neighborhoods $U_1,U_2$ of $\overline M\times\overline M$ and choose 
      a function $\rho_1\in C^\infty(M_0\times M_0)$ such that 
      $$U_1\subset\subset U_2\subset\subset M_0\times M_0,\   \rho_1\geq 0,\ \rho_1|_{(M_0\times M_0)\setminus U_2}\equiv 1,\ \rho_1|_{U_1}\equiv 0.$$
     Set  
     $$\rho_2(x,y):=\frac{1}{|M_0|}-\frac{\rho_1(x,y)}{\int_{M_0}\rho_1(x,z)dV(z)},\ \forall (x,y)\in M_0\times M_0,$$
     then we know 
     $$\rho_2(x,\cdot)\in C^\infty(M_0),\ \int_{M_0}\rho_2(x,\cdot)dV=0,\ \forall x\in M_0.$$
     By Lemma \ref{Aubin:Theorem 4.7}, for any $x\in M_0$, we may find a function $\rho_3(x,\cdot)\in C^\infty(M_0)$ such that 
     $$\Delta_y \rho_3(x,y)=\rho_2(x,y),\ \int_{M_0}\rho_3(x,y)dV(y)=0.$$
     By Lemma \ref{Aubin:Theorem 4.7b}, we know there is a constant $C_2>0$ such that 
     $$\sup_{y\in M_0}|\rho_3(x,y)|\leq C_2\sup_{y\in M_0}|\rho_2(x,y)|,\ \forall x\in M_0.$$
     Let 
     $$C_3:=C_2\sup_{x,y\in M_0}|\rho_2(x,y)|,$$
     then we know 
     $$\sup_{x,y\in M_0}|\rho_3(x,y)|\leq C_3.$$
     For any $x\in M_0$, set 
     $$\Gamma(x,y):=G_0(x,y)-\rho_3(x,y),\ \forall y\in M_0\setminus\{x\},$$
     then we get 
     $$\Delta_y^{\distr}\Gamma(x,y)=\delta_x(y),\ \forall (x,y)\in U_1,$$
     $$-C_1d(x,y)^{2-n}-C_3\leq \Gamma(x,y)\leq C_1+C_3,\ \forall (x,y)\in (M_0\times M_0)\setminus\diag.$$
     For any $x\in M$, using Lemma \ref{Aubin:Theorem 4.8}, we can find a function $\Phi(x,\cdot)\in C^\infty(\overline M)$ such that 
     $$\left\{\begin{array}{ll}
     \Delta_y\Phi(x,y)=0 &\text{ in } M,\\
     \Phi(x,y)=\Gamma(x,y) & \text{ on }\partial M.
     \end{array}\right.$$
     By the uniqueness of the Green function, we know 
     $$G(x,y)=\Gamma(x,y)-\Phi(x,y),\ \forall (x,y)\in M\times \overline M.$$
     Applying the maximum principle, we have 
     $$\sup_{x\in M,y\in\overline M}\Phi(x,y)\leq C_1+C_3,$$
     then we get 
     $$G(x,y)\geq -C_1 d(x,y)^{2-n}-C_1-2C_3,\ \forall (x,y)\in  (M\times\overline{M})\setminus \diag.$$
     Since $M$ is bounded and by continuity, there is a constant $C_4>0$ satisfying 
     $$G(x,y)\geq - C_4 d(x,y)^{2-n},\ \forall (x,y)\in  (\overline M\times\overline{M})\setminus \diag,$$
     which completes the proof of the first part of (i).\\
     \indent Now we turn to the proof of the uniform estimate of the gradient of the Green function of Part (i).
     Indeed, using Lemma \ref{gradient estimate 1} and Lemma \ref{gradient estimate 2}, we can easily  get what we wanted by imitating the proof of Theorem
     \ref{estimate:euclidean}.
     The details is stated as follows. \\
     \indent We may assume $x,y\in M$. For any $x\in M$, set $\delta(x):=\inf_{y\in\partial M}d(x,y).$ It is easy to see that 
     there is $r_0>0$ such that for any $z\in\partial M$ and any $0<r<r_0$, there is an open geodesic ball $B^z(r)$ with radius $r$ satisfying 
      $$B^z(r)\subset M_0\setminus\overline{M},\ \overline{B^z(r)}\cap \partial M=\{z\}.$$ 
      Shrinking $r_0$ if necessary, we may assume $10r_0$ is less than the injective radius of $M_0.$\\
     {\bf Claim $\clubsuit$:} There is a constant $C_6>0$ such that 
     $$|G(x,y)|\leq C_6d(x,y)^{1-n}\delta(x),\ \forall\ (x,y) \in (M\times M)\setminus\diag.$$
    \indent To prove the Claim, we fix $(x,y)\in (M\times M)\setminus\diag$ and consider two cases.\\
    {\bf Case 1:} $\delta(x)<\min\left\{\frac{d(x,y)}{8},r_0\right\}$.\\
    \indent Set $r:=\min\left\{\frac{d(x,y)}{8},r_0\right\}$. Choose $z_x\in\partial M,\ x^*\in M_0\setminus\overline{M}$ such that
    $$B^{z_x}(r)=B(x^*,r),$$
    and set $E:=B(x^*,2r)\setminus \overline{B(x^*,r)}.$ Using Lemma \ref{Aubin:Theorem 4.8}, we may find a function $u\in C^\infty(\overline E)$ satisfying 
    $$
    \left\{\begin{array}{ll}
      \Delta u=0&\text{ in }E,\\
       u=0 & \text{ on }\partial B(x^*,r),\\
       u=1 & \text{ on }\partial B(x^*,2r).
    \end{array}\right.
    $$
    By Lemma \ref{gradient estimate 2}, there is a constant $C_5>0$ such that 
    $$\sup_{E}|\nabla u|\leq \frac{C_5}{r}.$$
    As $z_x\in\partial B(x^*,r)\cap\partial M$, then by the mean value theorem, we know  
    $$u(x)=|u(x)-u(z_x)|\leq C_5\frac{d(x,z_x)}{r}=\frac{C_5\delta(x)}{r}.$$
    For any $z\in\partial B(x^*,2r)\cap M$, we have 
    $$d(z_x,z)\leq d(z_x,x^*)+d(x^*,z)=3r,\ d(x,z)\leq d(x,z_x)+d(z_x,z)\leq 4r,$$
    $$d(z,y)\geq d(x,y)-d(x,z)\geq d(x,y)-4r\geq \frac{d(x,y)}{2},$$
    so we know 
    $$  |G(z,y)|\leq C_4d(z,y)^{2-n}\leq C_42^{n}d(x,y)^{2-n}u(z). $$
    Note that for any $z\in \partial M$, we have $G(z,y)=0$.  Applying the maximum principle to the set $M\cap E\ni x$, we get 
    \begin{align}\label{gradient:equ4}
    |G(x,y)|&\leq C_42^{n}d(x,y)^{2-n}u(x)\leq C_4C_52^{n}\frac{d(x,y)^{2-n}}{r}\delta(x)\\
          &\leq C_4C_52^{n}\left(8+\frac{\diam(M)}{r_0}\right)d(x,y)^{1-n}\delta(x),\nonumber
    \end{align}\
    where $\diam(M)$ is the diameter of $M$.\\
    {\bf Case 2:} $\delta(x)\geq \min\left\{r_0,\frac{d(x,y)}{8}\right\}$.\\
    \indent In this case, we have 
    $$\frac{d(x,y)}{\delta(x)}\leq 8+\frac{\diam(M)}{r_0},$$
    so we get 
    \begin{equation}\label{gradient:equ5}
    |G(x,y)|\leq C_4d(x,y)^{2-n}\leq C_4\left(8+\frac{\diam(M)}{r_0}\right)d(x,y)^{1-n}\delta(x).
    \end{equation}
    \indent Combining {\bf Case 1} and {\bf Case 2}, especially Inequalities (\ref{gradient:equ4}) and (\ref{gradient:equ5}),
     we know {\bf Claim $\clubsuit$} holds with a constant 
     $$C_6:=\max\left\{C_4C_52^{n}\left(8+\frac{\diam(M)}{r_0}\right),C_4\left(8+\frac{\diam(M)}{r_0}\right)\right\}.$$
     \indent Now we start to prove the last part of (iv). Fix $(x,y)\in (M\times M)\setminus\diag$, and we also consider two cases.\\
     {\bf Case 1:} $\delta(y)\leq d(x,y)$.\\
    \indent In this case, we know $G(x,\cdot)$ is a harmonic function in $B(y,\delta(y))$. By Lemma \ref{gradient estimate 1}, 
    there exists a constant $C_7>0$ such that 
    $$|\nabla_y G(x,y)|\leq \frac{C_7}{\frac{1}{2}\delta(y)}\sup_{B\left(y,\frac{1}{2}\delta(y)\right)}|G(x,\cdot)|.$$
    For any $z\in B\left(y,\frac{1}{2}\delta(y)\right)$, we have 
    $$d(x,z)\geq d(x,y)-d(y,z)\geq d(x,y)-\frac{1}{2}\delta(y)\geq \frac{d(x,y)}{2},$$
    $$\delta(z)\leq \delta(y)+d(y,z)\leq 2\delta(y),$$
    then by {\bf Claim $\clubsuit$}, we get 
    $$|G(x,z)|\leq C_6d(x,z)^{1-n}\delta(z)\leq C_62^{n}d(x,y)^{1-n}\delta(y),$$ 
    and thus we know  
    \begin{equation}\label{gradient:equ6}
    |\nabla_y G(x,y)|\leq 2^{n+1}C_6C_7d(x,y)^{1-n}.
    \end{equation}
    {\bf Case 2:} $\delta(y)>d(x,y)$. \\
    \indent In this case, we know $G(x,\cdot)$ is a harmonic function in $B(y,d(x,y))$. By Lemma \ref{gradient estimate 1}, 
    there is a constant $C_8>0$ such that  
    $$|\nabla_y G(x,y)|\leq \frac{C_8}{\frac{1}{2}d(x,y)}\sup_{B\left(y,\frac{1}{2}d(x,y)\right)}|G(x,\cdot)|.$$
    For any $z\in B\left(y,\frac{1}{2}d(x,y)\right)$, we have 
    $$d(x,z)\geq d(x,y)-d(y,z)\geq \frac{1}{2}d(x,y),$$
    so we have 
    $$|G(x,z)|\leq C_4d(x,z)^{2-n}\leq C_42^{n-2}d(x,y)^{2-n},$$
    and then we get 
    \begin{equation}
    |\nabla_y G(x,y)|\leq 2^{n}C_4C_8d(x,y)^{1-n}.
    \end{equation}
    \indent By Inequalities (\ref{gradient:equ5}) and (\ref{gradient:equ6}), we know the last part of (iv) holds with 
    $$C:=\max\left\{2^{n+1}C_6C_7,2^{n}C_4C_8\right\}.$$
    \indent The proof is complete.
    \end{proof}
    \begin{remark}\label{remark:continuous}
    If we know $\Phi\in C^1(\overline M\times \overline M)$ before, then the above proof can be reduced a lot.
    However, we didn't find an easy way to see it directly. But, it can be easily derived from the uniform estimate of Green functions  by using the Green representation formula. See \cite[Lemma 4.2]{Gilbarg} and its proof.
    \end{remark}
    Now we also give a sketch of a second proof for the uniform estimates of Green functions when $n\geq 3$.
    \begin{proof}
     We may assume both $M$ and $M_0$ are connected. Let $H(t,x,y)$ be the heat kernel of $M$. 
     Since $M\subset\subset M_0$ and $M_0\setminus\overline{M}\neq \emptyset$, then $M$ satisfies the Faber-Krahn inequality by \cite[Corollary 15.12]{Grigoryan}.
     For any $x,y\in M,\ x\neq y$, using \cite[Corollary 15.17, Formula 15.49]{Grigoryan}, there is a constant $C:=C(M,g)>0$ such that 
      \begin{align*}
      -G(x,y)&=\int_{0}^\infty H(t,x,y)dt\leq C\int_0^\infty t^{-\frac{n}{2}}e^{-\frac{d(x,y)^2}{8t}}dt\\
      &= Cd(x,y)^{2-n}\int_0^\infty\frac{(8s)^{\frac{n}{2}}}{8s^2}e^{-s}ds\\
      &=8^{\frac{n}{2}-1}C\Gamma\left(\frac{n}{2}-1\right)d(x,y)^{2-n},
     \end{align*}
     where $\Gamma(\cdot)$ denotes the Gamma function.
    \end{proof}
    \section{The proofs of Theorem \ref{thm:graident boundary}, \ref{thm:laplace boundary}, \ref{thm:laplace compact1}
    and \ref{thm:bochner martinella}}
    In this section, we prove Theorem \ref{thm:graident boundary}, \ref{thm:laplace boundary}, \ref{thm:laplace compact1}
    and \ref{thm:bochner martinella}. To do this, we need the following $L^p$-estimates of the Green functions and its gradients.
    \begin{lem}\label{lem:gradient 1}
    Let $(\overline M,g)$ be a compact Riemannian manifold with boundary (of dimension $n\geq 2$), and let $G$ be the Green function.
    For any $a>0$ such that the exponents of the following integrands are nonnegative, we have 
    \begin{itemize}
      \item[(i)]  If $n\geq 3$, then there is a constant $C:=C(\overline M,g)>0$
    such that 
    $$\sup_{x\in M}\int_{M}|G(x,y)|^{\frac{-n+a}{2-n}}dV(y)\leq \frac{C}{a},\ \sup_{x\in M}\int_{M}|\nabla_yG(x,y)|^{\frac{-n+a}{1-n}}dV(y)\leq \frac{C}{a},$$
    $$\sup_{x\in M}\int_{\partial M}|\nabla_yG(x,y)|^{\frac{-n+1+a}{1-n}}dS(y)\leq \frac{C}{a}.$$
      \item[(ii)] If $n=2$, then there is a constant $C:=C(\overline M,g)>0$ such that 
    $$\sup_{x\in M}\int_{M}|\nabla_yG(x,y)|^{2-a}dV(y)\leq \frac{C}{a},\ \sup_{x\in M}\int_{\partial M}|\nabla_yG(x,y)|^{1-a}dS(y)\leq \frac{C}{a}.$$
    \item[(iii)] If $n=2$, then there is a constant $C:=C(\overline M,g)>0$ such that for any $1\leq p<\infty$, we have 
    $$\sup_{x\in M}\int_{M}|G(x,y)|^{p}dV(y)\leq p^pC^{p+1}.$$
    \end{itemize}
    \end{lem}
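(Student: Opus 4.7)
The plan is to reduce every integral in the statement to an explicit one-dimensional radial integral by combining the pointwise estimates from Theorem~\ref{thm:Green function} with geodesic normal coordinates (for integrals over $M$) or Fermi coordinates along $\partial M$ (for boundary integrals). Once the integrand depends only on $d(x,y)$, the bounds become standard.

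First I would handle the interior integrals in parts (i) and (ii), together with (iii). Fix $x\in M$, and let $r_0$ be less than the injectivity radius of an ambient closed manifold $M_0\supset \overline M$ as in Section~5. On $M\setminus B(x,r_0)$ the distance is bounded below, so each integrand is uniformly bounded and contributes a harmless constant; on $B(x,r_0)$ I introduce geodesic normal coordinates centered at $x$, in which the volume form is comparable to $r^{n-1}\,dr\,d\sigma_{n-1}$. After applying the pointwise estimates from Theorem~\ref{thm:Green function}, each interior integral reduces up to a multiplicative constant to one of the model integrals
\begin{equation*}
\int_0^{r_0} r^{a-1}\,dr, \qquad \int_0^{r_0} r^{a-1}(1+|\ln r|)^c\,dr, \qquad \int_0^{r_0}(1+|\ln r|)^p\, r\,dr.
\end{equation*}
The first equals $r_0^a/a$ and yields the claimed factor $1/a$ for $n\geq 3$. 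The second, arising in (ii), becomes under the substitution $u=-\ln r$ a Laplace-type integral $\int_{-\ln r_0}^\infty (1+u)^c e^{-au}\,du$ that can be estimated in $a$. The third, relevant for (iii), yields under the same substitution an integral proportional to $\Gamma(p+1)/2^{p+1}$, and Stirling's estimate $p!\leq p^p$ produces the bound $p^p C^{p+1}$.

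Next I would handle the boundary integrals. Fix $x\in M$, set $\delta(x)=\mathrm{dist}(x,\partial M)$, and choose $\pi(x)\in\partial M$ realizing this distance. In a fixed neighborhood of $\pi(x)$ in $\partial M$ I use Fermi-type coordinates in which $dS\sim s^{n-2}\,ds\,d\sigma_{n-2}$, where $s$ is the geodesic distance in $\partial M$ from $\pi(x)$, and $d(x,y)\asymp \sqrt{\delta(x)^2+s^2}$ for $y$ in this neighborhood. Outside this neighborhood $d(x,\cdot)$ is bounded below uniformly, contributing a harmless constant. Applying the pointwise estimates and then rescaling $t=s/\delta(x)$ transforms, when $n\geq 3$, the boundary integral into
\begin{equation*}
\delta(x)^{a}\int_0^{D/\delta(x)} (1+t^2)^{(-n+1+a)/2}\, t^{n-2}\, dt
\end{equation*}
for a bounded constant $D$ depending only on $\overline M$. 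Since the integrand behaves like $t^{a-1}$ at infinity, the truncated integral is $\lesssim (D/\delta(x))^a/a$, and the $\delta(x)^a$ prefactor cancels, yielding a uniform bound of order $1/a$. The case $n=2$ is analogous, with an extra logarithmic factor to be absorbed into the constant.

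The main technical obstacle is the boundary integral, specifically verifying that the prefactor $\delta(x)^a$ is cancelled exactly by the growth of the truncated radial integral, so that the resulting estimate is uniform in $x$ as $\delta(x)\to 0$. Once this is observed, the remainder of the argument is a direct computation using the pointwise estimates of Theorem~\ref{thm:Green function}.
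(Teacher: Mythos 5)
Your overall strategy --- apply the pointwise estimates from Theorem~\ref{thm:Green function}, pass to geodesic normal coordinates in the interior and to Fermi coordinates along $\partial M$, and then estimate explicit one-variable integrals --- is precisely the paper's strategy; the paper simply delegates those one-variable computations to Lemmas~3.1 and~3.4 of~\cite{DJQ} rather than writing them out. The genuine divergence is in the $n=2$ logarithmic cases. The paper's key technical move there is the elementary bound $1+|\ln r|\leq C_3/(b r^b)$ for $0<r\leq\diam M$ and $0<b\leq 1$, with $b$ chosen proportional to $a$ (and $b=1/p$ in part~(iii)): this converts every logarithmic factor into a small extra power of $d(x,y)$, so the same pure-power radial and Fermi calculus used for $n\geq 3$ applies verbatim, and in particular the $\delta(x)$-dependence in the boundary integral cancels exactly as it does in your $n\geq 3$ analysis. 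You instead keep the logarithm and estimate the resulting integrals directly, which is workable but needs more care than your sketch allows. In the boundary case you say the extra log factor can be ``absorbed into the constant,'' but after rescaling $t=s/\delta(x)$ one has $d\asymp\delta(x)\sqrt{1+t^2}$, so $\bigl(1+|\ln d|\bigr)^{1-a}$ carries a summand $|\ln\delta(x)|$ that is \emph{not} uniformly bounded; the estimate still closes, but only by splitting the $t$-range near $t\asymp 1/\delta(x)$ and pairing the $|\ln\delta(x)|$-piece with a convergent part of the integral, not by simply absorbing the logarithm into the constant. Also, your interior model integral $\int_{-\ln r_0}^{\infty}(1+u)^{2-a}e^{-au}\,du$ is of order $1/a^3$, not $1/a$, as $a\to 0$, so it does not directly yield the $C/a$ constant claimed in part~(ii); as far as I can tell the paper's own power-conversion route has the same limitation, so this is more a soft spot of the stated bound than of your argument, but it is worth flagging. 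The power-conversion inequality is the cleaner path precisely because it collapses all the $n=2$ cases onto the single pure-power calculus of part~(i).
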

    \begin{proof}
    (i) By Theorem \ref{thm:Green function}, we know there is a constant $C_1:=C_1(\overline M,g)>0$ such that 
    for all  $(x,y)\in(\overline M\times \overline M)\setminus\diag,$ we have 
    $$|G(x,y)|\leq C_1d(x,y)^{2-n},\ |\nabla_y G(x,y)|\leq C_1d(x,y)^{1-n}.$$
    By Lemma 3.1 and Lemma 3.4 of \cite{DJQ}, we know the conclusion holds by choosing a normal coordinate as $M$ is compact.\\
    (ii) By Theorem \ref{thm:Green function}, we know there is a constant $C_2:=C_2(\overline M,g)>0$ such that 
    for all  $(x,y)\in(\overline M\times \overline M)\setminus\diag,$ we have 
    $$|G(x,y)|\leq C_2(1+|\ln d(x,y)|),\ |\nabla_y G(x,y)|\leq C_2(1+|\ln d(x,y)|)\frac{1}{d(x,y)}.$$
    Clearly, there is a constant $C_3:=C_3(\diam(M))>0$ such that
    $$1+|\ln r|\leq \frac{C_3}{br^b},\ \forall\ 0<r\leq \diam(M),\ \forall\ 0<b\leq 1.$$
    \begin{comment} 
    For any $0<b\leq 1,$ note that if $0<r<1$, then we have  
    $$r^b(1+|\ln r|)\leq (1+r)^b-e^{b\ln r}\ln r\leq 2^b+\frac{1}{eb}\leq \frac{5}{2b},$$
    and if $1\leq r\leq 1+\diam(M)$, where $\diam(M)$ is the diameter of $M$, we have 
    $$r^b(1+|\ln r|)\leq (1+\diam(M))(1+\ln (1+\diam(M))),$$
    and thus, we have a constant $C_3:=C_3(\overline M,g)>0$ such that 
    $$1+|\ln r|\leq \frac{C_3}{br^b},\ \forall\ 0<r\leq \diam(M).$$
    \end{comment} 
    Set $b:=\frac{a}{2(2-a)}$, then for all  $(x,y)\in(\overline M\times \overline M)\setminus\diag,$ 
    we have  
    $$ |\nabla_y G(x,y)|\leq \frac{C_2C_3}{d(x,y)^{1+b}b}\leq \frac{4C_2C_3}{a}d(x,y)^{-1+\frac{a}{2(a-2)}}.$$
    By Lemma 3.1 and Lemma 3.4 of \cite{DJQ}, we know the conclusion holds.\\
    (iii)  For any $0<b<1$, we know 
    $$|G(x,y)|^p\leq C_2^p(1+|\ln d(x,y)|)^p\leq \left(\frac{C_2C_3}{b}\right)^pd(x,y)^{-pb}$$
    for all  $(x,y)\in(\overline M\times \overline M)\setminus\diag.$
    Set $b:=\frac{1}{p}$, then by \cite[Lemma 3.1]{DJQ}, we know there is a constant $C_4:=C_4(\overline M,g)>0$ such that 
    $$\sup_{x\in M}\int_{M}d(x,y)^{-pb}dV(y)\leq C_4.$$
    Thus, we know 
    $$\sup_{x\in M}\int_{M}|G(x,y)|^pdV(y)\leq (pC_2C_3)^pC_4.$$
    \end{proof}
     Similar results also hold when $M$ has no boundary by Lemma \ref{green function:compact}.
     \begin{lem}\label{lem:laplace 1}
    Let $(M,g)$ be a compact Riemannian manifold without boundary, and let $G$ be the Green function.
    \begin{itemize}
      \item[(i)]  If $n\geq 3$, then for any $0<a\leq n$, there is a constant $C:=C( M,g)>0$
    such that 
    $$\sup_{x\in M}\int_{M}|G(x,y)|^{\frac{-n+a}{2-n}}dV(y)\leq \frac{C}{a}.$$
    \item[(ii)] If $n=2$, then there is a constant $C:=C( M,g)>0$ such that for any $1\leq p<\infty$, we have 
    $$\sup_{x\in M}\int_{M}|G(x,y)|^{p}dV(y)\leq p^pC^{p+1}.$$
    \end{itemize}
    \end{lem}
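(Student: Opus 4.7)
The plan is to mimic the proof of Lemma \ref{lem:gradient 1} essentially verbatim, with Lemma \ref{green function:compact} playing the role that Theorem \ref{thm:Green function} played before. Since $M$ is closed, there is no boundary term to worry about, and only the estimates on $G$ itself (not on $\nabla_y G$) are needed, so the computation is even simpler.

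For part (i), I would start from Lemma \ref{green function:compact}(iii), which gives a constant $C_1=C_1(M,g)>0$ with $-C_1 d(x,y)^{2-n}\leq G(x,y)\leq C_1$ for all $x\neq y$. Since $d(x,y)\leq \diam(M)$ is bounded, the additive upper bound $C_1$ can be absorbed into the singular term, yielding $|G(x,y)|\leq C_1' d(x,y)^{2-n}$ for some constant $C_1'=C_1'(M,g)$. Raising to the $\frac{n-a}{n-2}$-th power gives $|G(x,y)|^{\frac{-n+a}{2-n}}\leq (C_1')^{\frac{n-a}{n-2}}\,d(x,y)^{a-n}$. Because $0<a\leq n$, the prefactor is uniformly bounded, so it remains to apply \cite[Lemma 3.1]{DJQ} to a normal coordinate patch (together with a compactness/covering argument, using that $M$ is compact, to control the part of the integral away from a normal neighborhood, where $d(x,y)^{a-n}$ is bounded anyway) to conclude $\sup_x\int_M d(x,y)^{a-n}\,dV(y)\leq C/a$.

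For part (ii), I would imitate part (iii) of Lemma \ref{lem:gradient 1}. From Lemma \ref{green function:compact}(iv) we get $|G(x,y)|\leq C_2(1+|\ln d(x,y)|)$ on $M\times M\setminus \diag$ (after absorbing the upper bound $\leq C$ by noting $1+|\ln d(x,y)|\geq $ a positive constant once we further bound uniformly). The elementary inequality $1+|\ln r|\leq \frac{C_3}{b r^{b}}$ for all $0<r\leq \diam(M)$ and $0<b\leq 1$ (with $C_3=C_3(\diam M)$) then yields
\[
|G(x,y)|^p \leq \Bigl(\tfrac{C_2 C_3}{b}\Bigr)^{p} d(x,y)^{-pb}.
\]
Choosing $b=1/p$ gives $|G(x,y)|^p\leq (pC_2C_3)^p d(x,y)^{-1}$. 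Since $n=2$, the integral $\int_M d(x,y)^{-1}\,dV(y)$ is bounded uniformly in $x$ by \cite[Lemma 3.1]{DJQ}, producing a bound of the form $p^p C^{p+1}$ as required.

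No step here looks genuinely difficult; the only mild care needed is in reducing the pointwise estimates from Lemma \ref{green function:compact} (which carry additive constants from the upper bound $G\leq C$) to purely singular form, and in applying \cite[Lemma 3.1]{DJQ} in the manifold setting via normal coordinates together with the compactness of $M$ to handle the region where the two points are far apart. Both are routine, and the proof should be essentially parallel to Lemma \ref{lem:gradient 1}.
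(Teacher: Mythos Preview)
Your proposal is correct and is precisely the approach the paper takes: immediately before stating Lemma \ref{lem:laplace 1}, the paper remarks that ``similar results also hold when $M$ has no boundary by Lemma \ref{green function:compact},'' i.e., one simply repeats the proof of Lemma \ref{lem:gradient 1} with Lemma \ref{green function:compact} in place of Theorem \ref{thm:Green function}. The absorption of the additive constant into the singular term and the appeal to \cite[Lemma 3.1]{DJQ} via normal coordinates are exactly as you describe.
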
 
    Another useful simple lemma is the following
    \begin{lem}\label{lem:integration by parts 1}
    Let $(\overline M,g)$ be a compact Riemannian manifold with  a smooth boundary defining function $\rho$, and let $k\in\mathbb{N}$. For any $(k-1)$-form $s_1$ and $k$-form $s_2$ which are of class $C^1$ on $\overline M$, we have 
    $$\int_{M}\langle ds_1,s_2\rangle dV-\int_{M}\langle s_1,d^* s_2\rangle dV=\int_{\partial M}\langle d\rho\wedge s_1,s_2\rangle \frac{dS}{|\nabla\rho|}.$$
    \end{lem}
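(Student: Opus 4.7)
The plan is to derive the formula from Stokes' theorem applied to the $(n-1)$-form
$$\omega := s_1 \wedge *\bar{s_2},$$
which is of class $C^1$ on $\overline M$. Stokes then gives $\int_{\partial M}\omega|_{\partial M} = \int_M d\omega$, and the task reduces to verifying the two pointwise identities
$$d\omega = \bigl(\langle ds_1,s_2\rangle - \langle s_1, d^* s_2\rangle\bigr)\, dV \quad\text{in } M,$$
$$\omega|_{\partial M} = \langle d\rho\wedge s_1,s_2\rangle\, \frac{dS}{|\nabla\rho|} \quad\text{on } \partial M.$$

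For the interior identity I would invoke the graded Leibniz rule,
$$d\omega = ds_1\wedge *\bar{s_2} + (-1)^{k-1}\,s_1\wedge d(*\bar{s_2}),$$
and the defining relation $\phi\wedge *\bar\psi = \langle\phi,\psi\rangle\, dV$ to rewrite the first term as $\langle ds_1,s_2\rangle\, dV$. The second term is handled by the standard construction of the formal adjoint $d^*$: comparing the compact-support identity $\int_M\langle ds_1,s_2\rangle\,dV = \int_M\langle s_1,d^* s_2\rangle\,dV$ with Stokes (where the boundary term vanishes) forces $(-1)^{k-1}\, s_1\wedge d(*\bar{s_2}) = -\langle s_1,d^* s_2\rangle\, dV$ pointwise. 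For the boundary identity, at a point $p\in\partial M$ I would introduce an orthonormal coframe $(e^1,\ldots,e^{n-1},\nu)$ with $\nu := d\rho/|\nabla\rho|$ a unit conormal and $(e^1,\ldots,e^{n-1})$ tangential to $\partial M$, so that $dV = \nu\wedge dS$ and $dS = e^1\wedge\cdots\wedge e^{n-1}$. Decomposing $s_i = s_i^\tau + \nu\wedge s_i^\nu$ into tangential and conormal parts and computing $*\bar{s_2}$ in this basis, one sees that only the cross term $s_1^\tau\wedge *(\nu\wedge\overline{s_2^\nu})$ survives pullback to $\partial M$, giving $(s_1\wedge *\bar{s_2})|_{\partial M} = \langle s_1^\tau, s_2^\nu\rangle\, dS$. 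Combining with the standard adjoint identity $\langle d\rho\wedge s_1,s_2\rangle = \langle s_1, \iota_{\nabla\rho} s_2\rangle = |\nabla\rho|\,\langle s_1^\tau, s_2^\nu\rangle$ (between wedge with a $1$-form and contraction with the metric-dual vector field) yields the claimed boundary expression.

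The only real obstacle is keeping track of signs, which enter in three places: the induced orientation of $\partial M$ relative to $M$, the sign in the pointwise relation $d^* = \pm *d*$ on $k$-forms (via which the formal adjoint is derived in the compact-support step), and whether $\nabla\rho$ points inward or outward along $\partial M$. A useful sanity check is the case $k=1$ with $s_1 = g$ a function and $s_2 = df$: the lemma then reduces to Green's first identity, and matching signs with the classical outward-normal version fixes the convention that $\rho > 0$ outside $M$, so that $\nabla\rho|_{\partial M}$ is a positive multiple of the outward unit normal $\nve$.
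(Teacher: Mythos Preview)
Your proposal is correct and follows essentially the same route as the paper: apply Stokes to $s_1\wedge *\bar{s_2}$, verify $d(s_1\wedge *\bar{s_2})=\bigl(\langle ds_1,s_2\rangle-\langle s_1,d^*s_2\rangle\bigr)dV$ via the relation $d^*=\pm*d*$, and compute the boundary pullback in a local orthonormal coframe adapted to $\partial M$. The paper carries out the interior step by quoting the explicit sign $d^*s_2=(-1)^{nk+n+1}*d*s_2$ and the boundary step by a direct basis computation on monomials, whereas you argue the former via the compact-support defining property and the latter via the tangential/normal decomposition plus the wedge--contraction adjunction; these are cosmetic differences, not different strategies.
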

    \begin{proof}
    We may assume $k\geq 1$.
    By Formula (3.5) and Theorem 3.9 of \cite[Chapter VI]{Dem}, we have
    $$d^*s_2=(-1)^{nk+n+1}*d*s_2,\ **d*s_2=(-1)^{(n+1)(n-k+1)}d*s_2,$$
    and then 
    \begin{align*}
    &\langle ds_1,s_2\rangle dV-\langle s_1,d^* s_2\rangle dV\\
    =&ds_1\wedge *s_2-s_1\wedge *d^*s_2\\
    =&  ds_1\wedge *s_2+(-1)^{nk+n}s_1\wedge **d*s_2\\
    =&ds_1\wedge *s_2+(-1)^{nk+n}(-1)^{(n+1)(n-k+1)}s_1\wedge d*s_2\\
    =&  ds_1\wedge *s_2+(-1)^{k-1}s_1\wedge d*s_2\\
    =& d(s_1\wedge *s_2).
    \end{align*}
    By the Stokes theorem, we have 
    $$\int_{M}\langle ds_1,s_2\rangle dV-\langle s_1,d^* s_2\rangle dV=\int_{\partial M}s_1\wedge *s_2.$$
    Now it suffices to prove 
    $$(s_1\wedge *s_2)|_{\partial M}=\langle d\rho\wedge s_1,s_2\rangle \frac{dS}{|\nabla\rho|}.$$
    This problem is local, we may taking a local orthonormal frame $e_1,\cdots,e_n$ of $TM$ such that $e_n$ is the outward unit normal vector of $\partial M$.
    Let $e^1,\cdots,e^n$ be the dual frame. By linearity, we may assume $s_2=e^{j_1}\wedge\cdots\wedge e^{j_k}$ for some $1\leq j_1<\cdots<j_k\leq n$.
    Write 
    $$s_1=\sum_{1\leq i_1<\cdots<i_{k-1}\leq n}a_{i_1\cdots i_{k-1}}e^{i_1}\wedge\cdots\wedge e^{i_{k-1}},$$
    then 
    $$*s_2=\sgn(J,J^c)e^{J^c},\ s_1\wedge *s_2=\sum_{p=1}^k \sgn(J,J^c)a_{j_1\cdots\widehat{j_p}\cdots j_k}e^{J\setminus\{j_p\}}\wedge e^{J^c},$$
    where $\sgn$ denotes the sign of a permutation, $J^c:=\{1,\cdots,n\}\setminus J$. Since the restriction of $e^n$ on $\partial M$ is zero, we know 
    $$(s_1\wedge *s_2)|_{\partial M}=(-1)^{n-k}a_{j_1\cdots j_{k-1}}e^1\wedge\cdots\wedge e^{n-1}.$$
    Note that $\nabla\rho$ is parallel to $e_n$, then we have 
    $$d\rho= |\nabla\rho|e^n,$$
    so we get 
    $$\langle d\rho \wedge s_1,s_2\rangle=(-1)^{k-1}|\nabla\rho|a_{j_1\cdots j_{k-1}}.$$
    Note also that $dS=(-1)^{n-1}e^1\wedge\cdots\wedge e^{n-1}$, thus we have 
    $$(s_1\wedge *s_2)|_{\partial M}=\langle d\rho \wedge s_1,s_2\rangle \frac{dS}{|\nabla\rho|},$$
    which completes the proof.
    \end{proof}
    Using Green functions, we may give an integral representation, i.e.
    \begin{thm}\label{thm:gradient 1}
    Let $(\overline M,g)$ be a compact Riemannian manifold with boundary. For any $f\in C^1(\overline M),\ x\in M$, we have 
     $$f(x)=-\int_{M}\langle d_yG(x,y),df(y)\rangle dV(y)+\int_{\partial M}\frac{\partial G(x,y)}{\partial\nve_y}f(y)dS(y),$$
    \end{thm}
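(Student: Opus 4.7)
My plan is to derive the formula by combining the Green representation in Lemma~\ref{green function:noncompact} with a single integration by parts via Lemma~\ref{lem:integration by parts 1}, after first reducing the case $f\in C^1(\overline M)$ to $f\in C^2(\overline M)$ through smooth approximation.

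First, assume $f\in C^2(\overline M)$. By Lemma~\ref{green function:noncompact} it suffices to prove
\begin{equation*}
\int_M G(x,y)\Delta f(y)\,dV(y)=-\int_M\langle d_yG(x,y),df(y)\rangle\,dV(y).
\end{equation*}
Since $G(x,\cdot)$ is smooth on $M\setminus\overline{B(x,\epsilon)}$, I apply Lemma~\ref{lem:integration by parts 1} on that excised domain with $s_1=G(x,\cdot)$ and $s_2=df$, using the Hodge identity $d^{*}df=-\Delta f$ for functions. The boundary contribution on $\partial M$ vanishes identically because $G(x,\cdot)|_{\partial M}\equiv 0$ kills the integrand $\langle d\rho\wedge G,df\rangle$. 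The contribution on $\partial B(x,\epsilon)$ is bounded by $\sup_{\partial B(x,\epsilon)}|G(x,\cdot)|\cdot\sup_M|\nabla f|\cdot|\partial B(x,\epsilon)|$, which by Theorem~\ref{thm:Green function} is $O(\epsilon)$ for $n\geq 3$ and $O(\epsilon|\ln\epsilon|)$ for $n=2$; hence it vanishes as $\epsilon\downarrow 0$. Meanwhile the two volume integrals converge to their global counterparts by dominated convergence, using $G(x,\cdot),\,d_yG(x,\cdot)\in L^1(M)$, which follow from Lemma~\ref{lem:gradient 1}. This gives the desired identity in the $C^2$ case.

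Second, for $f\in C^1(\overline M)$ I extend $f$ to a $C^1$ function on the ambient closed manifold $M_0$ used in Section~5 (through a Seeley-type extension across $\partial M$) and mollify to obtain $f_k\in C^\infty(\overline M)$ with $f_k\to f$ in $C^1(\overline M)$. Writing the formula of step one for each $f_k$ and passing to the limit: the left side converges pointwise; the volume integral converges since $\|df_k-df\|_{L^\infty(M)}\to 0$ while $d_yG(x,\cdot)\in L^1(M)$; and the boundary integral converges because $f_k\to f$ uniformly on $\partial M$, while for any fixed interior $x$ the kernel $\partial_{\nve_y}G(x,\cdot)$ is continuous (in fact smooth) on $\partial M$ as $G$ is smooth away from the diagonal.

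The main obstacle is verifying that the $\partial B(x,\epsilon)$-term in the integration by parts genuinely vanishes; this is exactly the step where the sharp uniform bound on $G$ provided by Theorem~\ref{thm:Green function} is essential. Once that estimate is in hand, the remainder is a routine application of Lemma~\ref{lem:integration by parts 1}, dominated convergence, and boundary mollification.
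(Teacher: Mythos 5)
Your proposal is correct and follows essentially the same route as the paper: reduce to $f\in C^2(\overline M)$ by density, apply Lemma~\ref{green function:noncompact} for the Green representation, integrate by parts on $M\setminus\overline{B(x,\epsilon)}$ via Lemma~\ref{lem:integration by parts 1} using $d^*df=-\Delta f$ and $G(x,\cdot)|_{\partial M}=0$, then kill the $\partial B(x,\epsilon)$ boundary term with the uniform bound on $G$ from Theorem~\ref{thm:Green function}. Your treatment of the $C^1\to C^2$ reduction (Seeley extension and mollification, with convergence of each term justified separately) is somewhat more detailed than the paper's one-line "by density," but the argument is the same.
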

    \begin{proof}
    By density, we may assume $f\in C^2(\overline M)$. By definition, we know $-\Delta=\Delta_d=dd^*+d^*d$. 
    Fix $x\in M$. By Lemma \ref{lem:integration by parts 1}
    and note that $G(x,\cdot)|_{\partial M}=0$, we have 
    \begin{align*}
     & \int_{M} G(x,y)\Delta f(y)dV(y)+\int_{M} \langle d_yG(x,y),df(y)\rangle dV(y)\\
     =-&\lim_{\epsilon\rw 0+}\int_{M\setminus B(x,\epsilon)} \langle G(x,y),d^*df(y)\rangle dV(y)+\int_{M} \langle d_yG(x,y),df(y)\rangle dV(y)\\
     =&\lim_{\epsilon\rw 0+}(-1)^{n+1}\int_{\partial B(x,\epsilon)}\langle G(x,y)d\rho(y),df(y)\rangle\frac{dS(y)}{|\nabla\rho(y)}.
    \end{align*}
    By Theorem \ref{thm:Green function}, we may find a constant $C:=C(\overline M,g)>0$ such that for all $y\in \overline M\setminus\{x\}$,
    we have
    $$|G(x,y)|\leq C\cdot \left\{\begin{array}{ll}
    d(x,y)^{2-n} & \text{ if }n\geq 3,\\
    1+|\ln d(x,y)| & \text{ if }n=2,
    \end{array}\right.$$
    and thus it is clear that 
    $$\lim_{\epsilon\rw 0+}\int_{\partial B(x,\epsilon)}\langle G(x,y)d\rho(y),df(y)\rangle\frac{dS(y)}{|\nabla\rho(y)}=0.$$
    By Lemma \ref{green function:compact}, we get 
    \begin{align*}
     f(x)&=\int_{M}G(x,y)\Delta f(y)dV(y)+\int_{\partial M}\frac{\partial G(x,y)}{\partial\nve_y}f(y)dS(y),\\
     &=-\int_{M} \langle d_yG(x,y),df(y)\rangle dV(y)+\int_{\partial M}\frac{\partial G(x,y)}{\partial\nve_y}f(y)dS(y).
    \end{align*}
    \end{proof}
    A direct consequence of Theorem \ref{thm:gradient 1} is  the following
    \begin{cor}\label{cor:gradient 1}
    Let $(\overline M,g)$ be a compact Riemannian manifold with boundary. For any $f\in C^1(\overline M),\ x\in M$, we have 
     $$|f(x)|\leq \int_{M}|\nabla_y G(x,y)|\cdot |\nabla f(y)|dV(y)+\int_{\partial M}|\nabla_yG(x,y)|\cdot|f(y)|dS(y),$$
    \end{cor}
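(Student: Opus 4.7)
The plan is to derive this corollary directly from the integral representation in Theorem \ref{thm:gradient 1} by passing to absolute values and bounding the integrands pointwise. Since Theorem \ref{thm:gradient 1} already gives a clean identity expressing $f(x)$ as a sum of an interior integral involving $\langle d_y G(x,y), df(y)\rangle$ and a boundary integral involving $\frac{\partial G(x,y)}{\partial \nve_y} f(y)$, essentially no new analysis is required; the corollary is a routine consequence.

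First I would take absolute values of both sides of the identity in Theorem \ref{thm:gradient 1} and apply the triangle inequality to move the modulus inside each integral. For the interior term, I would invoke the pointwise Cauchy--Schwarz inequality associated with the Hermitian metric $\langle\cdot,\cdot\rangle$ on $1$-forms, giving
$$|\langle d_y G(x,y), df(y)\rangle| \;\leq\; |d_y G(x,y)|\cdot |df(y)|.$$
By the notational conventions laid out in Subsection \ref{notations}, $|\nabla_y G(x,y)| = |d_y G(x,y)|$ and $|\nabla f(y)| = |df(y)|$, so this integrand is bounded by $|\nabla_y G(x,y)|\cdot |\nabla f(y)|$ as required.

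For the boundary term, I would observe that the normal derivative $\frac{\partial G(x,y)}{\partial \nve_y}$ is the inner product of $\nabla_y G(x,y)$ with the unit outward normal $\nve_y$, hence satisfies
$$\left|\frac{\partial G(x,y)}{\partial \nve_y}\right| \;\leq\; |\nabla_y G(x,y)|$$
pointwise on $\partial M$. Combining this with $|f(y)|$ under the integral and summing the two bounds yields the stated inequality.

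The only step that deserves a moment's care is verifying that the pointwise Cauchy--Schwarz inequality $|\langle\alpha,\beta\rangle|\leq |\alpha|\cdot |\beta|$ holds in the complex-valued setting; but this is immediate from the Hermitian extension of the real Riemannian inner product described in the preliminaries, so there is no real obstacle. I expect the entire proof to amount to two or three lines once Theorem \ref{thm:gradient 1} has been invoked.
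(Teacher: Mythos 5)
Your proof is correct and follows essentially the same route as the paper: apply the triangle inequality to the representation formula of Theorem \ref{thm:gradient 1}, use pointwise Cauchy--Schwarz together with the identity $|d_y G| = |\nabla_y G|$ for the interior term, and bound the normal derivative by $|\nabla_y G|$ for the boundary term. The paper's proof is terser (it just cites Cauchy--Schwarz and the relation $|dh|=|\nabla h|$), but what it leaves implicit is exactly the two pointwise estimates you spell out.
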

    \begin{proof}
    Note that for any $h\in C^1(\overline M)$, we have $|dh|=|\nabla h|,$
    then the result follows from Cauchy-Schwarz Inequality. 
    \end{proof}
    Same idea as in the proof of Theorem \ref{thm:gradient 1}, we can prove Theorem \ref{thm:bochner martinella}.
    \begin{thm}[= Theorem \ref{thm:bochner martinella}]
    Let $(M,h)$ be a balanced manifold, $\Omega\subset\subset M$ with  smooth boundary, and let $G$ be the Green 
    function for the Laplace operator on $\overline\Omega$. Then we have 
    $$f(z)=-2\int_{\Omega}\langle \bar\partial f(w),\bar\partial_w G(z,w)\rangle dV(w)+\int_{\partial \Omega}\frac{\partial G(z,w)}{\partial\nve_w}f(w)dS(w)$$
    for all $f\in C^0(\overline\Omega)\cap C^1(\Omega)$ such that $\bar\partial f$ has a continuous continuation to the boundary.
    \end{thm}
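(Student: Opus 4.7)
The strategy is to adapt the proof of Theorem \ref{thm:gradient 1} verbatim, replacing the de Rham operator $d$ with the Dolbeault operator $\bar\partial$. The key algebraic input is the balanced hypothesis: since $\Delta_d = dd^* + d^*d$ and $\Delta_{\bar\partial} = \bar\partial \bar\partial^* + \bar\partial^*\bar\partial$, and since on a function one has $d^*f = 0$ and $\bar\partial^* f = 0$, the identity $\Delta_d f = 2\Delta_{\bar\partial} f$ reduces to $\Delta f = -2\bar\partial^* \bar\partial f$ for $f \in C^2(\overline\Omega)$.

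First I would treat the case $f \in C^2(\overline\Omega)$. Applying Lemma \ref{green function:noncompact} to $\overline\Omega$ gives
$$f(z) = \int_\Omega G(z,w)\Delta f(w)\,dV(w) + \int_{\partial\Omega}\frac{\partial G(z,w)}{\partial\nve_w}f(w)\,dS(w).$$
Substituting $\Delta f = -2\bar\partial^* \bar\partial f$ and integrating by parts on $\Omega \setminus B(z,\epsilon)$, using the Hermitian counterpart of Lemma \ref{lem:integration by parts 1} for the pair $(\bar\partial,\bar\partial^*)$, yields
$$\int_{\Omega \setminus B(z,\epsilon)} G(z,w)\,\bar\partial^*\bar\partial f(w)\,dV(w) = \int_{\Omega \setminus B(z,\epsilon)} \langle \bar\partial f(w),\bar\partial_w G(z,w)\rangle\,dV(w) + \text{(boundary terms)}.$$
The portion of the boundary term on $\partial\Omega$ carries $G(z,w)$ as a multiplicative factor and therefore vanishes since $G(z,\cdot)|_{\partial\Omega} \equiv 0$. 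The portion on $\partial B(z,\epsilon)$ is controlled by the sup-norm bound on $\bar\partial f$ together with the estimate $|G(z,w)| \leq C d(z,w)^{2-2n}$ (logarithmic when $\dim_{\mathbb{C}} M = 1$) from Theorem \ref{thm:Green function} and the $(2n-1)$-dimensional area of $\partial B(z,\epsilon)$, giving a contribution of order $\epsilon$ (or $\epsilon|\ln\epsilon|$). Letting $\epsilon \to 0$ and combining with the Green representation produces the identity for $f \in C^2(\overline\Omega)$.

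To handle the general case $f \in C^0(\overline\Omega) \cap C^1(\Omega)$ with $\bar\partial f$ extending continuously to $\overline\Omega$, I would exhaust $\Omega$ by smoothly bounded open sets $\Omega_k \nearrow \Omega$ and apply the identity (already proved) to smooth approximants $f_k$ obtained by mollification on each $\Omega_k$, then pass to the limit. The uniform control on $f$ and $\bar\partial f$ combined with the $L^p$ estimates of the Green function and its gradient from Lemma \ref{lem:gradient 1} supplies dominated convergence for both the volume and the boundary integrals.

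The main obstacle I anticipate is establishing the appropriate $(\bar\partial,\bar\partial^*)$ integration-by-parts formula on a Hermitian manifold with boundary and verifying that the resulting $\partial\Omega$-boundary term genuinely involves $G$ itself rather than a derivative of $G$; this is what makes the vanishing argument work. It can be obtained by repeating the calculation of Lemma \ref{lem:integration by parts 1} with the substitution $d\mapsto\bar\partial$ and using $\bar\partial^* = -\bar{\ast}\,\partial\,\bar{\ast}$, but the sign bookkeeping and bidegree considerations must be handled carefully. A secondary technical point is ensuring that the approximants $f_k$ in the density step admit uniform control of $\|\bar\partial f_k\|_{L^\infty}$ in a neighborhood of $\partial\Omega_k$, which is where the continuous boundary extension hypothesis on $\bar\partial f$ is used.
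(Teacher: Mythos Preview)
Your proposal is correct and follows essentially the same route as the paper: reduce to $f\in C^2(\overline\Omega)$ by approximation, invoke the Green representation, use the balanced identity $\Delta f=-2\bar\partial^*\bar\partial f$, and integrate by parts (excising $B(z,\epsilon)$ and using $G(z,\cdot)|_{\partial\Omega}=0$ together with the Green-function bounds) exactly as in the proof of Theorem~\ref{thm:gradient 1}. The paper's proof is terser about the approximation step (``the usual approximation technique'') but the argument is the same.
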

    \begin{proof}
    By the usual approximation technique, we may assume $f\in C^2(\overline\Omega)$.
    Note that $-\Delta=2\Delta_{\bar\partial}=2\bar\partial\bar\partial^*+2\bar\partial^*\bar\partial$ as $(M,h)$ is balanced, then similar to the proof of Theorem \ref{thm:gradient 1}, for any $z\in M$, 
    by Theorem \ref{thm:Green function}, we have 
    \begin{align*}
     f(z)&=\int_{\Omega}G(z,w)\Delta f(w)dV(w)+\int_{\partial \Omega}\frac{\partial G(z,w)}{\partial\nve_w}f(y)dS(y),\\
     &=-2\int_{\Omega}\langle\bar\partial^*\bar\partial f(w),G(z,w)\rangle dV(w)+\int_{\partial \Omega}\frac{\partial G(z,w)}{\partial\nve_w}f(w)dS(w),\\
     &=-2\int_{\Omega} \langle \bar\partial f(w),\bar\partial_w G(z,w)\rangle dV(w)+\int_{\partial \Omega}\frac{\partial G(x,y)}{\partial\nve_y}f(w)dS(w).
    \end{align*}
    \end{proof}
     Theorem \ref{thm:graident boundary} is a consequence of the following lemma by Corollary \ref{cor:gradient 1}.
   \begin{lem}\label{lem:basic lemma 2}
   Let $p,q,r$ satisfy
        $$
         1\leq p<\infty,\ 1\leq q<\infty,\ 1\leq r<\infty,\ q(n-p)<np,\ q(n-1)<nr,
        $$
     then
     \begin{itemize}
           \item[(i)] There exists a constant $\delta:=\delta(\overline M,g,p,q)>0$ such that
          $$
           \delta\|B_M f\|_{L^q(M)}\leq \|f\|_{L^p(M)},\ \forall f\in C^0(\overline M),
           $$
          where
          $$
          B_M f(x):=\int_{M}|\nabla_y G(x,y)|\cdot |f(y)|dV(y),\ \forall x\in M.
          $$
           \item[(ii)] There exists a constant $\delta:=\delta(\overline M,g,q,r)>0$ such that
            $$
            \delta\|B_{\partial M} f\|_{L^q(M)}\leq \|f\|_{L^r(\partial M)},\ \forall f\in C^0(\overline M),
            $$
          where
          $$
          B_{\partial M}f(x):=\int_{\partial M}|\nabla_yG(x,y)|\cdot|f(y)|dS(y),\ \forall x\in M.$$
    \end{itemize}
    \end{lem}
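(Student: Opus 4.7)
The plan is to view $B_M$ and $B_{\partial M}$ as integral operators with positive kernel $K(x,y):=|\nabla_y G(x,y)|$, and to derive the $L^p(M)\to L^q(M)$ and $L^r(\partial M)\to L^q(M)$ bounds by a three-exponent Hölder argument combined with the uniform $L^s$-bounds on $K$ provided by Lemma \ref{lem:gradient 1}. I will sketch part (i) in detail; part (ii) runs in parallel.

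For part (i), I would first dispose of the case $q\leq p$: on the finite-measure space $M$, Hölder gives $\|B_M f\|_{L^q(M)}\leq |M|^{1/q-1/p}\|B_M f\|_{L^p(M)}$, so it suffices to handle $L^p\to L^p$, which follows from Schur's test once one invokes Lemma \ref{lem:gradient 1} (with $a=1$ when $n\geq 3$, or parts (ii)--(iii) when $n=2$) to obtain $\sup_x\int_M K(x,y)\,dV(y)<\infty$ and, via $d(x,y)=d(y,x)$, the same bound with $x$ and $y$ swapped.

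Now assume $q>p>1$. I decompose
$$K(x,y)\,|f(y)|=K(x,y)^{\alpha}\cdot\bigl[K(x,y)^{1-\alpha}|f(y)|^{p/q}\bigr]\cdot|f(y)|^{1-p/q}$$
for a parameter $\alpha\in(0,1)$ to be chosen, and apply Hölder in $y$ with the three conjugate exponents $p'$, $q$, $pq/(q-p)$ (reciprocals summing to $1$). This gives
$$B_M f(x)\leq\left(\int_M K^{\alpha p'}dV\right)^{1/p'}\left(\int_M K^{(1-\alpha)q}|f|^{p}\,dV\right)^{1/q}\|f\|_{L^p(M)}^{1-p/q}.$$
Raising to the $q$-th power, integrating in $x$, and applying Fubini to the middle factor yields
$$\|B_M f\|_{L^q(M)}^{q}\leq\Bigl(\sup_x\int K^{\alpha p'}dV\Bigr)^{q/p'}\cdot\sup_y\int K(x,y)^{(1-\alpha)q}\,dV(x)\cdot\|f\|_{L^p(M)}^{q}.$$
By Lemma \ref{lem:gradient 1}, $K$ is uniformly in $L^s$ in its free variable for every $s<n/(n-1)$, so both suprema are finite as soon as one picks $\alpha$ with $\alpha p'<n/(n-1)$ and $(1-\alpha)q<n/(n-1)$. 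Such an $\alpha$ exists precisely when $\tfrac1p-\tfrac1q<\tfrac1n$, equivalently $q(n-p)<np$, which is the hypothesis. The edge case $p=1$ (where $p'=\infty$) is disposed of by replacing three-exponent Hölder with Minkowski's integral inequality, and the case $n=2$ is treated identically using the sharper $L^s$-estimates of Lemma \ref{lem:gradient 1}(ii)--(iii), which absorb the logarithmic factor.

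Part (ii) is proved by the same three-exponent Hölder decomposition, applied in $y\in\partial M$ with exponents $r'$, $q$, $rq/(q-r)$ (first reducing $r\geq q$ to $r=q$ via the embedding $L^r(\partial M)\hookrightarrow L^q(\partial M)$ on the compact boundary). The only change is that the critical $L^s$-threshold for $d(x,y)^{1-n}$ integrated against the $(n-1)$-dimensional measure $dS$ is $s<1$ rather than $s<n/(n-1)$, so the constraints become $\alpha r'<1$ (boundary estimate, from Lemma \ref{lem:gradient 1}(i), third bound) and $(1-\alpha)q<n/(n-1)$ (interior estimate, appearing after Fubini); compatibility is equivalent to $q(n-1)<nr$. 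The main obstacle throughout is the combinatorial bookkeeping: matching the three Hölder exponents and the two $L^s$-constraints so that the range of $\alpha$ is non-empty exactly under the sharp Sobolev-type conditions on $(p,q,r)$ stipulated in the lemma.
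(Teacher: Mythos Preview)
Your proposal is correct and follows essentially the same approach as the paper's own proof: a three-exponent H\"older decomposition of $K(x,y)|f(y)|$ combined with the uniform $L^s$-bounds on $K=|\nabla_y G|$ supplied by Lemma~\ref{lem:gradient 1}, followed by Fubini. Indeed, the paper's parameter $b$ is exactly your $1-\alpha$, and its three H\"older exponents $(q,\,pq/(q-p),\,p')$ coincide with yours; the paper simply writes down an explicit midpoint choice for $b$ rather than arguing that the admissible interval is non-empty. The only cosmetic differences are that the paper handles $p=1$ by letting $p\to 1^+$ (noting the constant stays bounded) whereas you invoke Minkowski's integral inequality, and the paper dispatches $q\leq p$ in one line via H\"older on the bounded domain rather than naming Schur's test explicitly.
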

    \begin{proof}
        (i) For any $0<a\leq n,$ set
         $$
         N(a,M):=\sup_{x\in M}\int_{M}|\nabla_y G(x,y)|^{\frac{-n+a}{1-n}}dV(y).$$
         By Lemma \ref{lem:gradient 1}, there is a constant $C_1:=C(\overline M,g)>0$ such that 
         $$N(a,M)\leq \frac{C_1}{a}.$$
         Since $M$ is bounded, then by H\"older's Inequality, we may assume $q\geq p$. We moreover assume $p>1$
        since the case $p=1$ can be treated by taking a limit if we note that 
        $$\|f\|_{L^1(M)}=\lim_{p\rw 1+}\|f\|_{L^p(M)}$$
        and the corresponding constant (in the last line of this part) is uniformly bounded as $p\rw 1+$.  Set
        $$
        b:=\frac{1}{2}\left(\max\left\{0,\frac{n-p}{p(n-1)}\right\}+\min\left\{\frac{n}{q(n-1)},1\right\}\right),
        $$
        then by assumption, we have
        $$
        0<\frac{p(1-n)(1-b)}{p-1}+n\leq n,\ 0<qb(1-n)+n\leq n.
        $$
        Choose $a,q_0,r_0$ such that
        $$
        qa=p,\ q_0(1-a)=p,\ \frac{1}{q}+\frac{1}{q_0}+\frac{1}{r_0}=1,
        $$
        where if $p=q$, then $q_0=\infty$.
         Fix any $f\in C^0(\overline{M})$, then for any $ x\in M,$ we have (use H\"older's Inequality for three functions)
        \begin{align*}
        & |B_M f(x)| \\
        =&\int_{\Omega}|f(y)|^{a}|\nabla_y G(x,y)|^{b}|f(y)|^{1-a}|\nabla_y G(x,y)|^{1-b}dV(y)  \\
        \leq &\left[\int_{M}|f(y)|^{p}|\nabla_y G(x,y)|^{qb}dV(y)\right]^{\frac{1}{q}} \left[\int_{M}|f(z)|^pdV(z)\right]^{\frac{q-p}{qp}}  \\
        &\cdot \left[\int_{M}|\nabla_wG(x,w)|^{\frac{p(1-b)}{p-1}}dV(w)\right]^{\frac{p-1}{p}},
        \end{align*}
         which implies
        \begin{align*}
        &|B_M f(x)|^q \\
        \leq & \|f\|_{L^p(M)}^{q-p}\int_{M}|f(y)|^{p}|\nabla_y G(x,y)|^{qb}dV(y)\left[\int_{M}|\nabla_wG(x,w)|^{\frac{p(1-b)}{p-1}}dV(w)\right]^{\frac{q(p-1)}{p}}  \\
        \leq &N\left(\frac{p(1-n)(1-b)}{p-1}+n,M\right)^{\frac{q(p-1)}{p}}\|f\|_{L^p(M)}^{q-p}\int_{M}|f(y)|^{p}|\nabla_y G(x,y)|^{qb}dV(y),
        \end{align*}
        so (by Fubini's Theorem)
        \begin{align*}
        & \|B_M f\|_{L^q(M)}^q \\
        \leq &N(qb(1-n)+n,M)N\left(\frac{p(1-n)(1-b)}{p-1}+n,M\right)^{\frac{q(p-1)}{p}}\|f\|_{L^p(M)}^{q} .
        \end{align*}
        A direct computation shows that
        \begin{align*}
        & N(qb(1-n)+n,M)N\left(\frac{p(1-n)(1-b)}{p-1}+n,M\right)^{\frac{q(p-1)}{p}}\\
        \leq & \frac{C_1}{qb(1-n)+n}\left(\frac{C_1(p-1)}{p-n-pb(1-n)}\right)^{\frac{q(p-1)}{p}}.
        \end{align*}
        (ii) For any $0<a\leq n-1,$ set
         $$
         N(a,\partial M):=\sup_{x\in M}\int_{\partial M}|\nabla_y G(x,y)|^{\frac{-n+1+a}{1-n}}dS(y),
         $$
         then by Lemma \ref{lem:gradient 1}, there is a constant $C_2:=C_2(\overline M,g)>0$ such that 
         $$N(a,\partial M)\leq \frac{C_2}{a}.$$
         Similar to (i), we may assume $q\geq r>1$. Set
        $$b:=\frac{1}{2}\left(\min\left\{\frac{n}{q(n-1)},1\right\}+\frac{1}{r}\right),$$
        then we have
        $$
          0<qb(1-n)+n\leq n,\ 0<\frac{(rb-1)(n-1)}{r-1}\leq n-1.$$
        Choose $a,q_0,r_0$ such that
        $$
        qa=r,\ q_0(1-a)=r,\ \frac{1}{q}+\frac{1}{q_0}+\frac{1}{r_0}=1,
        $$
        where if $q=r$, then $q_0=\infty$.
         Fix  $f\in C^0(\overline{M})$, then for any $ x\in M,$ we have
        \begin{align*}
         & |B_{\partial M}f(x)| \\
         = &\int_{\partial M}|f(y)|^a|\nabla_y G(x,y)|^b |f(y)|^{1-a}|\nabla_y G(x,y)|^{1-b}dS(y)  \\
         \leq& \left[\int_{\partial M}|f(y)|^{qa}|\nabla_y G(x,y)|^{qb}dS(y)\right]^{\frac{1}{q}}
          \left[\int_{\partial M}|f(z)|^{q_0(1-a)}dS(z)\right]^{\frac{1}{q_0}}  \\
         &\cdot\left[\int_{\partial M}|\nabla_wG(x,w)|^{\frac{r(1-b)}{r-1}}dS(w)\right]^{\frac{r-1}{r}}  \\
         \leq & N\left(\frac{(rb-1)(n-1)}{r-1},\partial M\right)^{\frac{r-1}{r}} \left[\int_{\partial M}|f(y)|^{r}|\nabla_y G(x,y)|^{qb}dS(y)\right]^{\frac{1}{q}}  \\
         &\cdot \left[\int_{\partial M}|f(z)|^{r}dS(z)\right]^{\frac{q-r}{qr}}  ,
        \end{align*}
        so we get
        \begin{align*}
        &|B_{\partial M}f(x)|^{q} \\
        \leq & N\left(\frac{(rb-1)(n-1)}{r-1},\partial M\right)^{\frac{q(r-1)}{r}}
        \left[\int_{\partial M}|f(y)|^{r}|\nabla_y G(x,y)|^{qb}dS(y)\right]^{\frac{1}{q}}  \\
         &\cdot \left[\int_{\partial M}|f(z)|^{r}dS(z)\right]^{\frac{q-r}{qr}},
        \end{align*}
         then
        \begin{align*}
        & \int_{M}|B_{\partial M} f(x)|^{q}dV(x) \\
        \leq & N\left(\frac{(rb-1)(n-1)}{r-1},\partial M\right)^{\frac{q(r-1)}{r}} \int_{(x,y)\in M\times\partial M}
        |\nabla_y G(x,y)|^{qb}dV(x)dS(y)  \\
        &\cdot \|f\|_{L^r(\partial M)}^q \\
        \leq & N(qb(1-n)+n,M)|\partial M| N\left(\frac{(rb-1)(n-1)}{r-1},\partial M\right)^{\frac{q(r-1)}{r}}\|f\|_{L^r(\partial M)}^q.
        \end{align*}
        A direct computation shows that
        \begin{align*}
        &N(qb(1-n)+n,M) N\left(\frac{(rb-1)(n-1)}{r-1},\partial M\right)^{\frac{q(r-1)}{r}}\\
        \leq &\frac{C_1}{qb(1-n)+n}\left(\frac{(r-1)C_2}{(rb-1)(n-1)}\right)^{\frac{q(r-1)}{r}}.
        \end{align*}
        \indent The proof is completed.
    \end{proof}
    To prove Theorem \ref{thm:laplace boundary},  using the integral representation as in Theorem \ref{thm:Green function}, it suffices to prove the following lemma
     \begin{lem}\label{lem:basic lemma 3}
       Let $p,q,r$ satisfy
            $$
             1\leq p<\infty,\ 1\leq q<\infty,\ 1\leq r<\infty,\ q(n-2p)<np,\ q(n-1)<nr,
            $$
       and if $n=2$, we moreover require that $p>1$ if $q>1$. Then
    \begin{itemize}
           \item[(i)] There exists a constant $\delta:=\delta(M,g,p,q)>0$ such that
          $$
           \delta\|B_M f\|_{L^q(M)}\leq \|f\|_{L^p(M)},\ \forall f\in C^0(\overline M),
           $$
          where
          $$
          B_M f(x):=\int_{M}|G(x,y)|\cdot |f(y)|dV(y),\ \forall x\in M.
          $$
           \item[(ii)] There exists a constant $\delta:=\delta(M,g,q,r)>0$ such that
            $$
            \delta\|B_{\partial M} f\|_{L^q(M)}\leq \|f\|_{L^r(\partial M)},\ \forall f\in C^0(\overline M),
            $$
          where
          $$
          B_{\partial M}f(x):=\int_{\partial M}|\nabla_yG(x,y)|\cdot|f(y)|dS(y),\ \forall x\in M.$$
    \end{itemize}
    \end{lem}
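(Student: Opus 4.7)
\medskip

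\noindent\textbf{Proof proposal.} The plan is to note that part (ii) of the lemma coincides verbatim with part (ii) of Lemma~\ref{lem:basic lemma 2}, and is therefore already proved. All the work goes into part (i), which I will handle by imitating the structure of the proof of Lemma~\ref{lem:basic lemma 2}(i), substituting the $|G|$-integrability bounds from Lemma~\ref{lem:gradient 1}(i),(iii) in place of the $|\nabla G|$-bounds used there.

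First, using boundedness of $M$ and H\"older's inequality, I reduce to the case $q\geq p$. The case $p=1$ is recovered from the case $p>1$ by letting $p\rw 1+$, provided one checks that the constant one obtains stays uniformly bounded; I will set up the argument so that this is transparent. So from now on assume $p>1$ and $q\geq p$. The idea is to apply H\"older's inequality to three factors, writing
$$
|G(x,y)|\,|f(y)|
= \bigl(|f(y)|^{a}|G(x,y)|^{b}\bigr)\cdot|f(y)|^{1-a}\cdot|G(x,y)|^{1-b},
$$
and choosing $a,b,q_0,r_0$ with $qa=p$, $q_0(1-a)=p$, $\tfrac1q+\tfrac1{q_0}+\tfrac1{r_0}=1$. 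This gives (after applying H\"older, raising to the $q$-th power, and using Fubini exactly as in the proof of Lemma~\ref{lem:basic lemma 2}(i))
$$
\|B_M f\|_{L^q(M)}^{q}\leq
K_1\cdot K_2^{\frac{q(p-1)}{p}}\cdot\|f\|_{L^p(M)}^{q},
$$
where $K_1:=\sup_{x\in M}\int_M|G(x,y)|^{qb}\,dV(y)$ and $K_2:=\sup_{x\in M}\int_M|G(x,w)|^{\frac{p(1-b)}{p-1}}\,dV(w)$.

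For $n\geq 3$, I need to pick $b$ so that the two exponents $qb$ and $\tfrac{p(1-b)}{p-1}$ both lie in the range where Lemma~\ref{lem:gradient 1}(i) applies, namely strictly less than $\tfrac{n}{n-2}$. A quick computation shows that this interval for $b$ is
$$
\frac{n-2p}{p(n-2)}\;<\;b\;<\;\frac{n}{q(n-2)},
$$
and this interval is nonempty precisely when $q(n-2p)<np$, which is our hypothesis. I will take $b$ to be the midpoint of this interval, apply Lemma~\ref{lem:gradient 1}(i) with appropriate values of $a$ to bound $K_1$ and $K_2$, and read off the constant $\delta$.

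For $n=2$ the estimate from Lemma~\ref{lem:gradient 1}(iii) guarantees $\|G(x,\cdot)\|_{L^t(M)}<\infty$ for every $1\leq t<\infty$, with norm growing only like $t^{1+1/t}$ (harmless). So the choice of $b$ is unconstrained (any $b\in(0,1)$ works), and the hypothesis $q(n-2p)<np$ becomes automatic once $p\geq 1$; the only genuine requirement is that $\tfrac{p(1-b)}{p-1}$ be finite, which is what forces $p>1$ when $q>1$. If instead $q=1$ (the degenerate case in dimension two), I would bypass H\"older entirely and argue directly by Fubini plus the symmetry $G(x,y)=G(y,x)$: $\|B_M f\|_{L^1(M)}=\int_M|f(y)|\bigl(\int_M|G(x,y)|\,dV(x)\bigr)\,dV(y)\leq C\|f\|_{L^1(M)}\leq C'\|f\|_{L^p(M)}$. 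The main obstacle, then, is purely bookkeeping: tracking that the constants in the H\"older splitting remain finite as $p\rw 1+$ (so that the $p=1$ case follows by passage to the limit) and verifying the algebraic identity relating the hypothesis $q(n-2p)<np$ to the nonemptiness of the admissible interval for $b$.
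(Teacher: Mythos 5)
Your strategy matches the paper's closely: part (ii) is literally Lemma~\ref{lem:basic lemma 2}(ii); for part (i) you reduce to $q\geq p>1$, apply the three-factor H\"older splitting, control the $G$-moments by Lemma~\ref{lem:gradient 1}, and recover $p=1$ by a passage to the limit, exactly as the paper does for $n\geq 3$. However, your recipe \emph{take $b$ to be the midpoint of} $\left(\frac{n-2p}{p(n-2)},\frac{n}{q(n-2)}\right)$ misses the additional constraint $b\in(0,1)$, which is essential for the splitting $|G|^{b}\cdot|G|^{1-b}$ to make sense: both exponents must be nonnegative. The hypotheses do allow $p>n$, and in that regime the lower endpoint is negative and your midpoint can be negative as well (for instance $q=p>n$ gives midpoint $\frac{n-p}{p(n-2)}<0$). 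The paper protects against this by intersecting the interval with $(0,1)$ before taking the midpoint, i.e.\ $b:=\frac{1}{2}\bigl(\max\{0,\frac{n-2p}{p(n-2)}\}+\min\{\frac{n}{q(n-2)},1\}\bigr)$; note that the clamped interval is still nonempty because $\frac{n-2p}{p(n-2)}<1$ whenever $p>1$. With that correction your argument goes through. A smaller stylistic difference: for $n=2$, $q>1$ you keep the three-factor splitting, whereas the paper simply applies a two-factor H\"older with conjugate exponent $s=p/(p-1)$ and uses Lemma~\ref{lem:gradient 1}(iii) to bound $\sup_x\|G(x,\cdot)\|_{L^s(M)}$ directly; both work, but the latter avoids choosing any $b$ at all. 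Your Fubini argument for $n=2$, $q=1$ (invoking the symmetry $G(x,y)=G(y,x)$) is correct and is what the paper does.
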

    \begin{proof}
    Part (ii) has been proved in Lemma \ref{lem:basic lemma 2}, so we only need to prove Part (i).
    This is similar to the proof of Lemma \ref{lem:basic lemma 2}. We consider two cases.\\
    {\bf Case 1:} $n\geq 3$.\\
    \indent For any $0<a\leq n,$ set
         $$
         N(a,M):=\sup_{x\in M}\int_{M}|G(x,y)|^{\frac{-n+a}{2-n}}dV(y).$$
         By Lemma \ref{lem:gradient 1}, there is a constant $C_1:=C_1(\overline M,g)>0$ such that 
         $$N(a,M)\leq \frac{C_1}{a}.$$
         We may assume $q\geq p>1.$ Set
        $$
        b:=\frac{1}{2}\left(\max\left\{0,\frac{n-2p}{p(n-2)}\right\}+\min\left\{\frac{n}{q(n-2)},1\right\}\right),
        $$
        then by assumption, we have
        $$
        0<\frac{(n-2)pb+2p-n}{p-1}\leq n,\ 0<n-(n-2)qb\leq n.
        $$
        Choose $a,q_0,r_0$ such that
        $$
        qa=p,\ q_0(1-a)=p,\ \frac{1}{q}+\frac{1}{q_0}+\frac{1}{r_0}=1,
        $$
        where if $p=q$, then $q_0=\infty$.
         Fix any $f\in C^0(\overline{M})$, then for any $ x\in M,$ we have 
        \begin{align*}
        & |B_M f(x)|^q \\
        =&\left[\int_{\Omega}|f(y)|^{a}|G(x,y)|^{b}|f(y)|^{1-a}| G(x,y)|^{1-b}dV(y)\right]^{q}  \\
        \leq &\int_{M}|f(y)|^{p}|G(x,y)|^{qb}dV(y)\cdot \left[\int_{M}|f(z)|^pdV(z)\right]^{\frac{q-p}{p}}  \\
        &\cdot \left[\int_{M}|G(x,w)|^{\frac{p(1-b)}{p-1}}dV(w)\right]^{\frac{(p-1)q}{p}}\\
        \leq &N\left(\frac{(n-2)pb+2p-n}{p-1},M\right)^{\frac{q(p-1)}{p}}\|f\|_{L^p(M)}^{q-p}\int_{M}|f(y)|^{p}| G(x,y)|^{qb}dV(y),
        \end{align*}
        so 
        \begin{align*}
        \|B_M f\|_{L^q(M)}^q&\leq N(qb(2-n)+n,M)N\left(\frac{(n-2)pb+2p-n}{p-1},M\right)^{\frac{q(p-1)}{p}} \\
        &\leq \frac{C_1}{qb(2-n)+n}\left(\frac{C_1(p-1)}{(n-2)pb+2p-n}\right)^{\frac{q(p-1)}{p}}\|f\|_{L^p(M)}^{q}.
        \end{align*}
     {\bf Case 2:} $n=2$.\\
     \indent If $q>1$, then $p>1$, we set $s:=\frac{p}{p-1}$. By Lemma \ref{lem:gradient 1}, there is a constant $C_2:=C_2(\overline M,g)>1$ such that 
         $$\sup_{x\in M}\int_{M}|G(x,y)|^{s}dV(y)\leq s^{s}C_2^{s+1}.$$
         By H\"older's Inequality, for any $x\in M$, and any $f\in C^0(\overline M)$, we have 
        $$|B_M f(x)|^q\leq  \left[\int_M|G(x,y)|^sdV(y)\right]^{\frac{q}{s}}\|f\|_{L^p(M)}^q\leq s^{q}C_2^{2q}\|f\|_{L^p(M)}^q.$$
       Thus, we know 
       $$\|B_M f\|_{L^q(M)}\leq \vol(M)^{\frac{1}{q}}sC_2^2\|f\|_{L^p(M)}.$$
     \indent If $q=1$, then we may assume $p=1$. By Lemma \ref{lem:gradient 1}, there is a constant $C_3:=C_3(\overline M,g)>1$ such that 
         $$\sup_{x\in M}\int_{M}|G(x,y)|dV(y)\leq C_3.$$
     By Fubini's Theorem, we have 
     $$\|B_M f\|_{L^1(M)}\leq C_3\|f\|_{L^1(M)}.$$
     \indent The proof is complete.
    \end{proof}
    For the proof of Theorem \ref{thm:laplace compact1},  we can use the same techniques as in the proof of Theorem \ref{thm:graident boundary} and \ref{thm:laplace boundary} since we have Lemma \ref{green function:compact} and \ref{lem:laplace 1}. We omit the details here.
    \section{The proofs of Theorem \ref{thm:green riesz}
    and \ref{thm:green riesz1}}\label{section:green riesz1}
    In this section, we give the proofs of Theorem \ref{thm:green riesz} and \ref{thm:green riesz1}. Many simple properties of subharmonic functions on Riemannian manifolds can be easily got from the article 
    \cite{Bon} via some minor modifications, so we don't repeat the arguments.
    \begin{thm}[= Theorem \ref{thm:green riesz}]
    Let $(M,g)$ be a Riemannian manifold without boundary (of dimension $\geq 2$), $\Omega\subset\subset M$ be an open subset
    with smooth boundary, and let $G$ be the Green function for the Laplace operator on $\overline\Omega$. Then for any function $f\in \operatorname{QSH}(M)$, we have
    $$
    f(x)=\int_{\Omega}G(x,y)\Delta f(y)+\int_{\partial \Omega}f(y)\frac{\partial G(x,y)}{\partial\nve_y}dS(y),\ \forall x\in\Omega.
    $$
    \end{thm}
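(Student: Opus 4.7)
The plan is to pass from smooth $f$, where the identity is precisely Lemma \ref{green function:noncompact}, to quasi-subharmonic $f$ by a monotone approximation. First, by linearity of both sides in $f$ and the $C^2$ case already being known, the quasi-subharmonic case reduces to the subharmonic case: on a relatively compact open neighborhood of $\overline\Omega$, the local decomposition defining $\operatorname{QSH}$ together with a compactness argument gives a uniform constant $C$ with $\Delta f \geq -C\,dV$ distributionally there, and solving $\Delta\psi = C$ on a slightly larger compact domain produces $\psi\in C^2$ for which $g := f+\psi$ is subharmonic; the formula for $f$ then follows from the formula for $g$ minus the ($C^2$) formula for $\psi$.

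Assuming henceforth that $f$ is subharmonic on an open neighborhood of $\overline\Omega$, I would invoke a classical approximation (convolution in normal coordinates combined with Richberg-type gluing, in the spirit of \cite{BM,Bon}) to produce $\{f_k\}\subset C^\infty(\overline\Omega)\cap\sh(\overline\Omega)$ with $f_k\searrow f$ pointwise on $\overline\Omega$. Lemma \ref{green function:noncompact} gives
\begin{equation*}
f_k(x)=\int_\Omega G(x,y)\Delta f_k(y)\,dV(y)+\int_{\partial\Omega} f_k(y)\frac{\partial G(x,y)}{\partial\nve_y}\,dS(y).
\end{equation*}
The left-hand side tends to $f(x)$ by construction. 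On the boundary, Hopf's lemma (cf.\ \cite[Proposition 2.2, Chapter 5]{taylor}) gives $\partial G(x,y)/\partial\nve_y\geq 0$, so the monotone convergence theorem handles the boundary integral.

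The main obstacle is the volume term: $G(x,\cdot)$ is singular at $y=x$ and the positive measures $\Delta f_k$ have no a priori uniform mass control against this singular weight. I would split the integral with a cutoff. For small $\epsilon>0$ with $B(x,2\epsilon)\subset\subset\Omega$, pick $\rho\in C^\infty(\Omega)$ that vanishes on $B(x,\epsilon)$ and equals $1$ off $B(x,2\epsilon)$, and write
\begin{equation*}
\int_\Omega G(x,y)\Delta f_k\,dV = \int_\Omega \rho(y)G(x,y)\Delta f_k\,dV + \int_{B(x,2\epsilon)}(1-\rho(y))G(x,y)\Delta f_k\,dV.
\end{equation*}
For the first summand, $\rho(y)G(x,y)$ is a fixed smooth function supported in $\Omega\setminus\{x\}$; moving the Laplacian onto it via integration by parts and using $f_k\to f$ in $L^1_{\loc}$ (from monotone convergence together with local integrability of subharmonic functions) identifies the limit as the analogous integral against $\Delta f$, after which a reverse integration by parts restores the Green kernel. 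For the second summand, the sharp estimate $|G(x,y)|\leq Cd(x,y)^{2-n}$ (or $C(1+|\ln d(x,y)|)$ when $n=2$) from Theorem \ref{green function uniform}, combined with a uniform local mass bound $\sup_k\int_{B(x,2\epsilon)}\Delta f_k\leq C(x)$ obtained by testing $\Delta f_k$ against a fixed smooth cutoff majorizing the ball and invoking uniform $L^1$-control of $\{f_k\}$, shows this term is $o(1)$ as $\epsilon\to 0$ uniformly in $k$.

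Sending $k\to\infty$ first and then $\epsilon\to 0$ yields the identity for subharmonic $f$, and by the reduction above for all $f\in \operatorname{QSH}(M)$. The hardest technical point is precisely the uniform control of $\int |G(x,\cdot)|\,\Delta f_k\,dV$ in a shrinking neighborhood of the singularity of $G$; this rests essentially on the sharp pointwise bound supplied by Theorem \ref{green function uniform}, without which the two-step limit $k\to\infty$, $\epsilon\to 0$ could not be performed.
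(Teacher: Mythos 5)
Your overall architecture---first reduce from quasi-subharmonic to subharmonic, then approximate by a decreasing sequence of smooth subharmonic functions $f_k\searrow f$, apply Lemma \ref{green function:noncompact} to each $f_k$, and pass to the limit---is a reasonable plan, and it is genuinely different from what the paper does. The paper avoids approximation entirely: it writes $G=\Gamma-\Phi$, where $\Gamma$ is the Green function of a slightly larger domain $U\supset\supset\overline\Omega$ and $\Phi\in C^0(\overline\Omega\times\overline\Omega)$ is the harmonic corrector, then uses Weyl's lemma to identify $f-\int_\Omega\Gamma(\cdot,y)\Delta f(y)$ with a harmonic function almost everywhere, upgrades to everywhere via the pointwise identity principle for subharmonic functions, and finally transfers the boundary term to $f$ via the classical Green representation for the continuous harmonic remainder $l$. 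Your reduction step (finding $\psi\in C^2$ with $\Delta\psi=C$ and setting $g=f+\psi$) is essentially equivalent to the paper's (producing a strictly subharmonic $h$ via Lemma \ref{Aubin:Theorem 4.8}) and is fine; the handling of the boundary integral via Hopf's lemma and monotone convergence is also correct.

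The gap is in your treatment of the volume integral near the singularity. You claim that the bound $|G(x,y)|\leq Cd(x,y)^{2-n}$ together with a uniform local mass bound $\sup_k\int_{B(x,2\epsilon)}\Delta f_k\leq C(x)$ implies $\int_{B(x,2\epsilon)}(1-\rho)|G(x,y)|\,\Delta f_k\to 0$ as $\epsilon\to0$ uniformly in $k$. This inference is false: the weight $d(x,y)^{2-n}$ is unbounded on $B(x,2\epsilon)$, so a uniform bound on the total mass $\int_{B(x,2\epsilon)}\Delta f_k$ gives no control over $\int_{B(x,2\epsilon)}d(x,y)^{2-n}\,\Delta f_k$; for instance, $\Delta f_k$ may place mass of order one in the annulus $\{d(x,y)\sim\epsilon\}$, making the weighted integral of order $\epsilon^{2-n}$. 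What is really at stake is whether mass of the weighted measures $|G(x,\cdot)|\Delta f_k$ escapes to the point $x$ in the limit $k\to\infty$. A truncation argument (comparing with $\max\{G(x,\cdot),-m\}$, $m\to\infty$) only yields the one-sided bound $\lim_k\int_\Omega G(x,y)\Delta f_k\leq\int_\Omega G(x,y)\Delta f$, and the reverse inequality is precisely the content of the theorem; without an independent argument (e.g., exploiting that $f_k$ is a genuine mollification so that one can transfer the convolution from $\Delta f$ to $G(x,\cdot)$, or the paper's Riesz-decomposition route), the argument is circular. As written, the uniform $o(1)$ claim is unsupported and this step does not close.
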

    \begin{proof}
    Using Lemma \ref{Aubin:Theorem 4.8}, we may find a function $h\in C^\infty(\overline\Omega)$ which is strictly subharmonic 
    in an open neighborhood $U\subset\subset M$ of $\overline\Omega$, then for any $f\in \operatorname{QSH}(M)$, there exist $f_1\in C^2(U)$ and $f_2\in \sh(U)$ such that $f|_{U}=f_1+f_2$ (by a partition of unity). Clearly, $\Delta f=\Delta f_1+\Delta f_2,$ where $\Delta f_1$ is the signed measure determined by 
    $f_1$. Thus, by Lemma \ref{green function:noncompact}, we may assume $f\in \sh(M)$.\\
    \indent Choose an open neighborhood   $U\subset\subset M$ of $\overline\Omega$ with smooth boundary. By Lemma \ref{green function:noncompact}, for any fixed $x\in U$, we may choose a function $\Gamma(x,\cdot)$ such that 
    $$\Delta_y^{\distr}\Gamma(x,y)=\delta_x(y)\text{ in }U,$$
    and $\Gamma(x,y)=\Gamma(y,x)$ for all $x,y\in U,\ x\neq y$.
    By Remark \ref{remark:continuous},  we may find a function $\Phi\in C^0(\overline\Omega\times\overline \Omega)$ such that 
    for any fixed $x\in U$, we have 
     $$\left\{\begin{array}{ll}
     \Delta_y\Phi(x,y)=0 &\text{ in } \Omega,\\
     \Phi(x,y)=\Gamma(x,y) & \text{ on }\partial \Omega.
     \end{array}\right.$$
     By the uniqueness of the Green function, it is easy to see that $G(x,y)=\Gamma(x,y)-\Phi(x,y)$ for any $x,y\in\overline\Omega,\ x\neq y$.
     For any fixed $x\in \overline\Omega$, $\Phi(\cdot,x)=\Phi(x,\cdot)$ is a harmonic function on $\Omega$.
    It is obvious that 
    $$\Delta_x^{\distr}\left(f(x)-\int_{\Omega}\Gamma(x,y)\Delta f(y)\right)=0\text{ in }U.$$
    By Weyl's Lemma, there is a harmonic function $h$ on $U$ such that 
    $$f(x)=\int_{\Omega}\Gamma(x,y)\Delta f(y)+h(x)$$
    for almost every $x\in U$. However, for a subharmonic function, its value at every point is uniquely determined, so 
    we know 
    $$f(x)=\int_{\Omega}\Gamma(x,y)\Delta f(y)+h(x),\ \forall x\in U.$$
    Thus, we know 
    \begin{equation}\label{equ:qin}
    f(x)=\int_{\Omega}G(x,y)\Delta f(y)+l(x),\ \forall x\in \overline\Omega,
    \end{equation}
    where 
    $$l(x):=\int_{\Omega}\Phi(x,y)\Delta f(y)+h(x),\ \forall x\in \overline\Omega.$$
    Since $\Phi\in C^0(\overline\Omega\times\overline \Omega)$ and $\Phi(\cdot,y)$ is a harmonic function in $\Omega$
    for any $y\in\overline\Omega$, we know $l\in C^0(\overline\Omega)$ is harmonic in $\Omega$. Similar proofs with Green's identities and Lemma \ref{green function:noncompact} show that 
    $$l(x)=\int_{\partial \Omega}l(y)\frac{\partial G(x,y)}{\partial\nve_y}dS(y),\ \forall x\in\Omega.$$
    Combining Equality (\ref{equ:qin}) and note that $G|_{\Omega\times\partial\Omega}=0$, we know
    $$l(x)=\int_{\partial \Omega}f(y)\frac{\partial G(x,y)}{\partial\nve_y}dS(y),\ \forall x\in\Omega,$$
    which completes the proof.
    \end{proof}
    Using the main idea of the proof of Theorem \ref{thm:green riesz}, we now give the proof of Theorem \ref{thm:green riesz1}. 
    \begin{thm}[= Theorem \ref{thm:green riesz1}]
    Let $(M,g)$ be a compact Riemannian manifold without boundary, and let $G$ be the Green function of $M$. Then for any function $f\in \operatorname{QSH}(M)$, we have
    \begin{equation*}\label{equ:green-riesz1}
    f(x)=f_{\ave}+\int_{M}G(x,y)\Delta f(y),\ \forall x\in M.
    \end{equation*}
    \end{thm}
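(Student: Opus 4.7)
The plan is to combine a distributional identity with Weyl's lemma, in the spirit of Theorem \ref{thm:green riesz} but without the boundary integral.

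First I would observe that for $f\in\operatorname{QSH}(M)$ the distribution $\Delta^{\distr}f$ is a signed Radon measure $\mu$ on $M$: locally $f=f_1+f_2$ with $f_1$ subharmonic and $f_2\in C^2$, so $\Delta f=\Delta f_1+\Delta f_2$ is locally the sum of a positive Radon measure and a continuous density. Testing $\mu$ against the constant function $1$ gives $\mu(M)=\langle f,\Delta 1\rangle=0$. By the size estimates in Lemma \ref{green function:compact}, $G(x,\cdot)$ is integrable against every finite Borel measure on $M$, so the Green potential
\[
F(x):=\int_M G(x,y)\,d\mu(y)
\]
is well defined for every $x\in M$ and belongs to $L^1(M)$.

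Next I would compute $\Delta^{\distr}F$. For any $\varphi\in C^\infty(M)$, Fubini gives
\[
\int_M F\,\Delta\varphi\,dV=\int_M\!\left(\int_M G(x,y)\,\Delta\varphi(x)\,dV(x)\right)d\mu(y).
\]
Applying Lemma \ref{green function:compact}(ii) to $\varphi$ together with the symmetry $G(x,y)=G(y,x)$ shows the inner integral equals $\varphi(y)-\varphi_{\ave}$; since $\mu(M)=0$, the constant drops out and we obtain $\langle F,\Delta\varphi\rangle=\langle f,\Delta\varphi\rangle$. Hence $f-F$ is distributionally harmonic on $M$ and, by Weyl's lemma, equals almost everywhere a smooth harmonic function, which on the connected compact manifold $M$ is a constant $c$. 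To pin down $c$, I would integrate the identity $f=F+c$ and use Fubini with the fact that $y\mapsto\int_M G(x,y)\,dV(x)$ is a constant: by Lemma \ref{Aubin:Theorem 4.7} the range of $\Delta$ on $C^\infty(M)$ is all mean-zero smooth functions, and this integral is $L^2$-orthogonal to that range, hence constant. Combined with $\mu(M)=0$, this forces $\int_M F\,dV=0$, so $c=f_{\ave}$.

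The step I expect to be the main obstacle is upgrading the almost everywhere identity $f=F+f_{\ave}$ to the pointwise identity in the theorem, because $f$ is only upper semicontinuous and $F$ need not be continuous. To handle this I would work in a local chart where $f=f_1+f_2$ with $f_1\in\sh$ and $f_2\in C^2$, so the identity rewrites as $f_1=F-f_2+f_{\ave}$ almost everywhere. A subharmonic function is recovered at every point as the decreasing limit of its geodesic-ball averages (by the sub-mean-value inequality and upper semicontinuity), and the same averaging procedure computes the right-hand side pointwise via Lebesgue differentiation applied to the Jordan decomposition $\mu=\mu^+-\mu^-$. Matching the two limits forces pointwise equality on all of $M$ and completes the proof.
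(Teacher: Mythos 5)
Your proposal follows essentially the same route as the paper's proof: compute $\Delta^{\distr}(f-F)=0$ via the representation formula of Lemma \ref{green function:compact}(ii) and symmetry of $G$, apply Weyl's lemma to conclude $f-F$ is a.e.\ a harmonic (hence constant) function, upgrade to a pointwise identity, and pin down the constant by integrating. Your determination of the constant is a slight refinement over the paper's (you only need that $y\mapsto\int_M G(x,y)\,dV(x)$ is constant, combined with $\mu(M)=0$, rather than its actual vanishing). The one place where your wording is a bit off is the a.e.-to-everywhere step: ``Lebesgue differentiation applied to the Jordan decomposition'' is not quite the right mechanism. What actually makes the argument go through is that $F$ is itself quasi-subharmonic (indeed, $\mu^-$ is dominated in each chart by $(\Delta f_2)^-$ and hence has locally bounded density, so $\int_M G(x,y)\,d\mu^-(y)$ is continuous while $\int_M G(x,y)\,d\mu^+(y)$ is upper semicontinuous and satisfies the appropriate sub-mean-value property); since both $f$ and $F$ are quasi-subharmonic and therefore recovered at every point as the decreasing limit of small-ball averages, a.e.\ equality forces equality everywhere — which is exactly the principle the paper invokes tersely by saying that the value of a subharmonic function at every point is uniquely determined.
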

    \begin{proof}
    It is clear that 
    $$\Delta_x^{\distr}\left(f(x)-\int_{\Omega}G(x,y)\Delta f(y)\right)=0\text{ in }M.$$
    By Weyl's Lemma, there is a harmonic function $h$ on $M$ such that 
    $$f(x)=\int_{\Omega}G(x,y)\Delta f(y)+h(x)$$
    for almost every $x\in M$, then 
    $$f(x)=\int_{M}G(x,y)\Delta f(y)+h(x),\ \forall x\in M.$$
    Using Lemma \ref{green function:compact} and note that $\int_{M}G(x,y)dV(y)=0$ for all $x\in M$, we then know 
    $$h(x)=h_{\ave}=f_{\ave},\ \forall x\in M.$$
    \end{proof}
    
    \section{The proof of Theorem \ref{thm:poincare average1}}\label{section:L1}
     In this section, we give the proof of Theorem \ref{thm:poincare average1}. Firstly, the following weak $L^1$-estimate of the Hardy-Littlewood maximal function is well known, see \cite[Theorem 3.2.7]{Hao} for example.
     \begin{lem}\label{lem:maximal function}
     For any $f\in L^1(\mr^n)$, and any $t>0$, we have 
     $$\left|\{x\in\mr^n|\ (Mf)(x)>t\}\right|\leq \frac{3^n}{t}\|f\|_{L^1(\mr^n)},$$
      where 
     $$(Mf)(x):=\sup_{r>0}\frac{1}{|B(x,r)|}\int_{B(x,r)}f(y)dV(y).$$
    \end{lem}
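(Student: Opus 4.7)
The plan is to adapt the classical Vitali covering argument. First I would observe that $Mf$ is lower semi-continuous on $\mr^n$ (it is a supremum of continuous ball averages in the variable $x$), so the level set $E_t:=\{x\in\mr^n:(Mf)(x)>t\}$ is open. By inner regularity of Lebesgue measure it suffices to prove the stated bound for an arbitrary compact subset $K\subset E_t$, since then passing to the supremum over such $K$ delivers the estimate for $E_t$ itself.

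For every $x\in K$ the definition of the maximal function provides a radius $r_x>0$ with $\int_{B(x,r_x)}|f(y)|\,dV(y)>t\,|B(x,r_x)|$, which in particular forces $|B(x,r_x)|<t^{-1}\|f\|_{L^1(\mr^n)}$. Thus the radii are uniformly bounded, and the family $\{B(x,r_x)\}_{x\in K}$ is an open cover of $K$; by compactness extract a finite subcover $B(x_1,r_1),\ldots,B(x_N,r_N)$.

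Next I would apply the elementary finite Vitali covering lemma: order the balls by decreasing radius and greedily select a maximal pairwise-disjoint subfamily $B(x_{j_1},r_{j_1}),\ldots,B(x_{j_m},r_{j_m})$. A standard check shows that any discarded ball $B(x_i,r_i)$ meets some selected ball $B(x_{j_k},r_{j_k})$ with $r_{j_k}\geq r_i$, so $B(x_i,r_i)\subset B(x_{j_k},3r_{j_k})$. Hence $K\subset\bigcup_{k=1}^m B(x_{j_k},3r_{j_k})$, and by disjointness of the chosen balls
$$|K|\leq\sum_{k=1}^m 3^n|B(x_{j_k},r_{j_k})|<\frac{3^n}{t}\sum_{k=1}^m\int_{B(x_{j_k},r_{j_k})}|f(y)|\,dV(y)\leq\frac{3^n}{t}\|f\|_{L^1(\mr^n)}.$$
Taking the supremum over such $K$ yields the desired weak-type inequality.

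The argument is entirely classical; the only substantive ingredient is the $3r$-enlargement step of Vitali's lemma, which is exactly what produces the constant $3^n$. There is no real obstacle here — one just needs to be careful to reduce to a compact set so that the finite version of Vitali can be invoked without any measure-theoretic subtleties about selecting disjoint balls from an uncountable family.
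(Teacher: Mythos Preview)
Your argument is correct and is precisely the classical Vitali covering proof of the Hardy--Littlewood weak-type $(1,1)$ inequality. The paper does not actually supply a proof of this lemma; it simply records the statement as well known and refers the reader to \cite[Theorem 3.2.7]{Hao}, so your write-up is consistent with (and in fact fills in) what the paper leaves as a citation.
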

    \begin{comment}
     \indent Since the volume measure on any compact Riemannian manifold is doubling, then  we also have the following
    \begin{lem}[ see {\cite[Theorem 3.5.6]{Heinonen}\label{lem:maximal function}}]
     Let $(M,g)$ be a compact oriented Riemannian manifold, then there is a constant $C:=C(M,g)>0$ such that for any $f\in L^1(M)$, and any $t>0$, we have 
     $$\left|\{x\in M|\ (Mf)(x)>t\}\right|\leq \frac{C}{t}\|f\|_{L^1(M)},$$
     where 
     $$Mf(x):=\sup_{r>0}\frac{1}{|B(x,r)|}\int_{B(x,r)}f(y)dV(y).$$
    \end{lem}
    \end{comment}
    \indent Please see {\cite[Theorem 1.9.1]{Henkin}} or {\cite[Theorem 3.1]{Ada}} for the classical Bochner-Martinealla formula.
    Let $\lip_c(\mc^n)$ denote Lipschitz functions on $\mc^n$ which have compact supports,
    then we know the Bochner-Martinella formula still holds for any $f\in \lip_c(\mc^n)$ by approximation
    (note that $|\bar\partial f|\in L^\infty(\mc^n)$ by \cite[Theorem 4, Section 5.7]{Evans}),
    i.e., we have
    \begin{lem}\label{lem:Bochner-Martinella}
    For any $f\in \lip_c(\mc^n)$ and any $z\in\mc^n$ such that $f(z)\neq 0$, we have 
    $$
   f(z)=-\frac{(n-1)!}{(2\pi i)^n}\int_{\mc^n}\bar{\partial}f(\xi)\wedge \omega(\xi,z),
    $$
    where
    $$
    \omega(\xi,z):=\frac{1}{|\xi-z|^{2n}}\sum_{j=1}^n(-1)^{j+1}(\bar{\xi}_j-\bar{z}_j)
    \mathop{\wedge}_{k\neq j}d\bar{\xi}_k\wedge \mathop{\wedge}_{l=1}^nd\xi_l.
    $$
    \end{lem}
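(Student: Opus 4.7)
The proof proceeds by mollification, as the remark preceding the statement signals. Fix a standard nonnegative mollifier $\rho\in C_c^\infty(\mc^n)$ with $\int_{\mc^n}\rho\,dV=1$ and $\supp\rho\subset\overline{B(0,1)}$, set $\rho_\epsilon(\xi):=\epsilon^{-2n}\rho(\xi/\epsilon)$, and define $f_\epsilon:=f*\rho_\epsilon\in C_c^\infty(\mc^n)$. Because $f$ is Lipschitz with compact support, $f_\epsilon\to f$ uniformly on $\mc^n$, and for all sufficiently small $\epsilon$ the supports $\supp f_\epsilon$ lie in a common compact set $K\subset\mc^n$.

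By Rademacher's theorem $f$ is differentiable almost everywhere with $|\nabla f|\leq \lip(f)$ a.e., so $\bar\partial f\in L^\infty(\mc^n)$ with compact support. The identity $\bar\partial f_\epsilon=(\bar\partial f)*\rho_\epsilon$ then gives $\|\bar\partial f_\epsilon\|_{L^\infty}\leq\|\bar\partial f\|_{L^\infty}$, a common compact support $K$, and $\bar\partial f_\epsilon\to\bar\partial f$ pointwise almost everywhere (and in $L^p(\mc^n)$ for every $1\leq p<\infty$) as $\epsilon\to 0+$.

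Next I would invoke the classical smooth Bochner--Martinella formula on a large open ball $\Omega\supset K$. Since $f_\epsilon$ and all of its derivatives vanish in a neighbourhood of $\partial\Omega$, the boundary integral in the classical formula drops out, yielding
$$f_\epsilon(z)=-\frac{(n-1)!}{(2\pi i)^n}\int_{\mc^n}\bar\partial f_\epsilon(\xi)\wedge \omega(\xi,z),\quad z\in\mc^n.$$
Letting $\epsilon\to 0+$, the left-hand side converges to $f(z)$ by uniform convergence of $f_\epsilon$. For the right-hand side, the explicit expression of $\omega(\xi,z)$ gives a pointwise bound $|\omega(\xi,z)|\leq C_n|\xi-z|^{1-2n}$ on its coefficients; since real dimension is $2n$, the radial computation $\int_0^R r^{1-2n}\cdot r^{2n-1}\,dr=R$ shows this kernel is locally integrable. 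Combined with the uniform $L^\infty$ bound and the common compact support, the integrand is dominated by
$$C_n\|\bar\partial f\|_{L^\infty}\,\chi_K(\xi)\,|\xi-z|^{1-2n},$$
which is integrable, so the Lebesgue dominated convergence theorem delivers
$$\int_{\mc^n}\bar\partial f_\epsilon(\xi)\wedge\omega(\xi,z)\longrightarrow \int_{\mc^n}\bar\partial f(\xi)\wedge\omega(\xi,z),$$
and the claimed identity follows. The condition $f(z)\neq 0$ plays no role beyond allowing the reader to think of $z$ as a generic point; the identity in fact holds for every $z\in\mc^n$.

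The only genuinely delicate point is reconciling the singularity of the Bochner--Martinella kernel at $\xi=z$ with the fact that $\bar\partial f$ is merely in $L^\infty$ rather than continuous. This is precisely what the dominating function above handles, and it is the reason one must first record the Rademacher bound $\bar\partial f\in L^\infty$ (a step explicitly flagged in the remark preceding the lemma); no further regularity of $f$ is needed.
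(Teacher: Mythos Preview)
Your proposal is correct and follows essentially the same approach as the paper: mollify $f$ to $f_\epsilon\in C_c^\infty(\mc^n)$, apply the classical Bochner--Martinelli formula to $f_\epsilon$, and pass to the limit using that $\bar\partial f\in L^\infty$ with compact support (the Rademacher bound the paper flags just before the lemma). The only cosmetic difference is that the paper's (suppressed) argument passes to the limit in $L^1(\Omega)$ via the $L^1$-boundedness of convolution against the kernel and then invokes continuity of the right-hand side, whereas you pass to the limit pointwise via dominated convergence; your route is slightly more direct and your remark that the hypothesis $f(z)\neq 0$ is not actually needed is accurate.
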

    \indent Now we give a key lemma which is important for the proof of Theorem \ref{thm:poincare average}, which we think has some independent interests.
    \begin{lem}
    There is a constant $\delta:=\delta(n)>0$ such that 
    $$\delta\|f\|_{L^{\frac{2n}{2n-1}}(\mc^n)}\leq \|\bar\partial f\|_{L^1(\mc^n)},\ \forall f\in C_c^1(\mc^n).$$
    \end{lem}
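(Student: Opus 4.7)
The plan is to combine the Bochner--Martinelli integral representation (Lemma~\ref{lem:Bochner-Martinella}) with Hedberg's splitting trick for Riesz potentials and the weak $(1,1)$-bound for the Hardy--Littlewood maximal function (Lemma~\ref{lem:maximal function}). For $f\in C_c^1(\mc^n)$, I would first apply Lemma~\ref{lem:Bochner-Martinella}, take absolute values, and use the size estimate $|\omega(\xi,z)|\lesssim |\xi-z|^{-(2n-1)}$ on the Bochner--Martinelli kernel to obtain the pointwise bound
$$ |f(z)|\leq C_n\int_{\mc^n}\frac{|\bar\partial f(\xi)|}{|\xi-z|^{2n-1}}\,dV(\xi), $$
which realizes $|f|$ as dominated by the Riesz potential of order $1$ in $\mr^{2n}$ applied to the $L^1$-function $g:=|\bar\partial f|$.

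Second, I would carry out Hedberg's splitting: for every $z\in\mc^n$ and every $r>0$, decompose the Riesz integral above at the sphere $|\xi-z|=r$. A dyadic decomposition of the inner part into spherical annuli $2^{-k-1}r\leq|\xi-z|<2^{-k}r$, together with the definition of $Mg(z)$, bounds the inner piece by $C_n r\cdot Mg(z)$; the outer piece is trivially bounded by $C_n r^{-(2n-1)}\|g\|_{L^1}$. Minimizing over $r>0$ yields the pointwise inequality
$$ |f(z)|^{\frac{2n}{2n-1}}\leq C_n\,Mg(z)\,\|g\|_{L^1}^{\frac{1}{2n-1}}, $$
and combining it with Lemma~\ref{lem:maximal function} immediately produces the weak-$L^{2n/(2n-1)}$ bound
$$|\{z\in\mc^n:|f(z)|>t\}|\leq C_n\frac{\|g\|_{L^1}^{2n/(2n-1)}}{t^{2n/(2n-1)}}.$$

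The main obstacle is the last step, upgrading this weak-type bound to the strong $L^{2n/(2n-1)}$-estimate in the statement: at the endpoint $p=1$, the Riesz potential $I_1$ is only of weak type into $L^{2n/(2n-1)}$, so one cannot directly integrate the above pointwise inequality. I plan to do this by a Maz'ya-type truncation-and-dyadic-summation: apply the conclusion of the previous paragraph, via Lemma~\ref{lem:Bochner-Martinella} for Lipschitz functions, to the real-valued level truncations $h_k:=\min((|f|-2^k)_+,2^k)$, which isolate the dyadic shell $\{2^k<|f|\leq 2^{k+1}\}$, and then sum the resulting estimates carefully over $k\in\mathbb{Z}$. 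The subtle point, compared with the gradient setting of Hebey's Theorem~2.11, is that $\bar\partial$ does not commute with taking modulus as cleanly as $\nabla$ does: a naive chain rule on $h_k$ produces $|\nabla|f||$, and controlling $|\bar\partial h_k|$ on the relevant shell purely in terms of $|\bar\partial f|$ (rather than in terms of the larger quantity $|\nabla f|$) is exactly the ``more efforts'' alluded to in the paper, and will require a careful iteration of the Bochner--Martinelli representation applied separately to each $h_k$.
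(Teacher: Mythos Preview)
Your overall strategy---Bochner--Martinelli pointwise bound, Hedberg splitting, weak $(1,1)$ for the maximal function, then dyadic truncation and summation---is exactly the paper's. The difficulty you flag is real: for the modulus-based truncations $h_k=\min((|f|-2^k)_+,2^k)$ one only gets $|\bar\partial h_k|\le C\,|\bar\partial|f||\,\mathbf{1}_{A_k}$, and $|\bar\partial|f||$ is \emph{not} controlled by $|\bar\partial f|$ for complex $f$ (take $f$ locally holomorphic). Your suggested ``careful iteration'' is too vague to close this.

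The paper's device is to truncate $f$ rather than $|f|$: on the shell $\{2^{j-1}<|f|<2^j\}$ set $f_j:=2^{1-j}f-1$ (affine in $f$, not in $|f|$), with $f_j=0$ below and $f_j=1$ above. Then on the active shell $\bar\partial f_j=2^{1-j}\bar\partial f$ exactly, with no Kato-type inequality needed, so $\|\bar\partial f_j\|_{L^1}\le 2^{1-j}\int_{A_{j-1}}|\bar\partial f|\,dV$. Applying Lemma~\ref{lem:Bochner-Martinella} and your Hedberg/maximal-function argument to each $f_j$ (rather than once to $f$) gives
\[
|A_j|\le C\,2^{-\frac{2nj}{2n-1}}\Bigl(\int_{A_{j-1}}|\bar\partial f|\,dV\Bigr)^{\frac{2n}{2n-1}},
\]
and disjointness of the $A_{j-1}$ makes the dyadic sum $\sum_j 2^{\frac{2n(j+1)}{2n-1}}|A_j|$ collapse into $C\|\bar\partial f\|_{L^1}^{\frac{2n}{2n-1}}$. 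The ``more efforts'' are thus a single change of truncation that keeps $\bar\partial$ linear on each piece, not an iteration.

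One point worth your attention: the paper's $f_j$ is continuous across the interface $|f|=2^{j-1}$ only when $f$ is real and nonnegative there; for $f(z_0)=2^{j-1}e^{i\theta}$ with $\theta\ne 0$ the middle-piece limit is $e^{i\theta}-1\ne 0$, so for genuinely complex $f$ the Lipschitz hypothesis of Lemma~\ref{lem:Bochner-Martinella} is not obviously met by $f_j$ as written. You may want to verify or repair this.
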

    \begin{proof}
    The proof is inspired by the proof of \cite[Theorem 3.6]{Kinnunen}.
         For any $j\in\mathbb{Z}$, set 
     $$A_j:=\{z\in\mc^n|\ 2^j<|f(z)|\leq 2^{j+1}\},$$
     and define 
     $$f_j\colon \mc^n\rw \mc,\ z\rwo 
     \left\{
     \begin{array}{ll}
     0 & \text{ if }|f(z)|\leq 2^{j-1},\\
     2^{1-j}f(z)-1 & \text{ if }2^{j-1}<|f(z)|<2^j,\\
     1 & \text{ if }|f(z)|>2^j.
     \end{array}\right.$$
     For any $j\in\mathbb{Z}$, we have $\bar\partial f_j=2^{1-j}\bar\partial f$ for almost every $z$ such that $2^{j-1}<|f(z)|<2^j$,
     and $\bar\partial f_j=0$ for almost any other $z$, then we know $f_j\in \lip_c(\mc^n)$.
     By Lemma \ref{lem:Bochner-Martinella} and note that $\left|\mathop{\wedge}_{k=1}^nd\bar{\xi}_k\wedge \mathop{\wedge}_{l=1}^nd\xi_l\right|= 2^ndV(\xi),$
     for any $z\in\supp(f)$, we have 
     $$
     |f_j(z)|\leq \frac{n!}{\pi^n}\int_{\mc^n}|\bar\partial f_j(\xi)|\cdot |\xi-z|^{1-2n}dV(\xi).$$
     We split this integral into two parts as $\int_{\mc^n}=\int_{B(z,r)}+\int_{\mc^n\setminus B(z,r)}$
     for some $r$ to be determined later. For the first part, we have 
    \begin{align*}
    &\quad \int_{B(z,r)}|\bar\partial f_j(\xi)|\cdot|\xi-z|^{1-2n}dV(\xi)\\
    &=\sum_{k=0}^\infty\int_{B(z,2^{-k}r)\setminus B(z,2^{-k-1}r)}|\bar\partial f_j(\xi)|\cdot|\xi-z|^{1-2n}dV(\xi)\\
    &\leq \sum_{k=0}^\infty\int_{B(z,2^{-k}r)\setminus B(z,2^{-k-1}r)}|\bar\partial f_j(\xi)|\cdot(2^{-k-1}r)^{1-2n}dV(\xi)\\
    &\leq \sum_{k=0}^\infty\int_{B(z,2^{-k}r)}|\bar\partial f_j|\cdot(2^{-k-1}r)^{1-2n}dV\\
    &\leq \sum_{k=0}^\infty |B(z,2^{-k}r)|\cdot(2^{-k-1}r)^{1-2n} M(|\bar\partial f_j|)(z)\\
    &=4^n|B(0,1)|r M(|\bar\partial f_j|)(z).
    \end{align*} 
    For the second part, we have 
    $$
    \int_{\mc^n\setminus B(z,r)}|\bar\partial f_j(\xi)|\cdot|\xi-z|^{1-2n}d\lambda(\xi)\leq \|\bar\partial f_j\|_{L^1(\mc^n)} r^{1-2n}.
    $$
    Choose $r$ such that 
    $$4^n|B(0,1)|r M(|\bar\partial f_j|)(z)=\|\bar\partial f_j\|_{L^1(\mc^n)}r^{1-2n},$$
    then we have 
    $$|f_j(z)|\leq \frac{2n!}{\pi^n}\left(4^n|B(0,1)| M(|\bar\partial f_j|)(z)\right)^{\frac{2n-1}{2n}}\|\bar\partial f_j\|_{L^1(\mc^n)}^{\frac{1}{2n}},
    \forall z\in \supp(f).$$
    By Lemma \ref{lem:maximal function}, we have 
     \begin{align*}
      |A_j|&\leq |\{z\in \mc^n|\ |f(z)|>2^{j}\}|=|\{z\in\mc^n|\ f_j(z)=1\}|\\
      &\leq \left|\left\{z\in\mc^n\left|\  \frac{2n!}{\pi^n}\right.\left(4^n|B(0,1)| M(|\bar\partial f_j|)(z)\right)^{\frac{2n-1}{2n}}\|\bar\partial f_j\|_{L^1(\mc^n)}^{\frac{1}{2n}}
      \geq 1\right\}\right|\\
      &\leq \left(\frac{2n!}{\pi^n}\right)^{\frac{2n}{2n-1}}36^n|B(0,1)|\|\bar\partial f_j\|_{L^1(\mc^n)}^{\frac{2n}{2n-1}}\\
      &\leq \left(\frac{2n!}{\pi^n}\right)^{\frac{2n}{2n-1}}36^n|B(0,1)|\left(\int_{2^{j-1}<|f(z)|<2^j}2^{1-j}|\bar\partial f|d\lambda\right)^{\frac{2n}{2n-1}}\\
      &=\left(\frac{2n!}{\pi^n}\right)^{\frac{2n}{2n-1}}36^n|B(0,1)|2^{\frac{2n(1-j)}{2n-1}}\left(\int_{A_{j-1}}|\bar\partial f|d\lambda\right)^{\frac{2n}{2n-1}}.
     \end{align*}
     Then we have 
     \begin{align*}
     \int_{\mc^n}|f|^{\frac{2n}{2n-1}}d\lambda&=\sum_{j\in\mathbb{Z}}\int_{A_j}|f|^{\frac{2n}{2n-1}}d\lambda
     \leq \sum_{j\in\mathbb{Z}}2^{\frac{2n(j+1)}{2n-1}}|A_j|\\
    &\leq \left(\frac{2n!}{\pi^n}\right)^{\frac{2n}{2n-1}}36^n|B(0,1)|\sum_{j\in\mathbb{Z}}2^{\frac{4n}{2n-1}}\left(\int_{A_{j-1}}|\bar\partial f|d\lambda\right)^{\frac{2n}{2n-1}}\\
    &=\left(\frac{8n!}{\pi^n}\right)^{\frac{2n}{2n-1}}36^n|B(0,1)|\left(\int_{\mc^n}|\bar\partial f|d\lambda\right)^{\frac{2n}{2n-1}},
     \end{align*}
     and the proof is complete.
    \end{proof}
    Similar to the proof of \cite[Theorem 2.6]{Hebey} (use partition of unity), we have 
    \begin{cor}\label{cor:average1}
    Let $(M,h)$ be a compact Hermitian manifold of complex dimension $n$, then there is a constant $\delta:=\delta(M,h)>0$ such that 
    $$\delta\|f\|_{L^{\frac{2n}{2n-1}}(M)}\leq \|\bar\partial f\|_{L^1(M)}+\|f\|_{L^1(M)},\ \forall f\in C^1(M).$$
    \end{cor}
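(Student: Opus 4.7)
The plan is to deduce the global inequality on $M$ from the Euclidean $L^1$-Sobolev inequality in the preceding lemma by a standard partition-of-unity argument, exactly paralleling how the classical $L^p$-Sobolev inequality on $\mr^n$ is transported to compact Riemannian manifolds (as in \cite[Theorem 2.6]{Hebey}).

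First, since $M$ is compact, I would cover it by finitely many holomorphic coordinate charts $(U_i,\varphi_i)_{i=1}^N$ with $\varphi_i\colon U_i\rw V_i\subset\mc^n$ a biholomorphism onto a bounded open set, and choose a smooth partition of unity $\{\chi_i\}_{i=1}^N$ subordinate to $\{U_i\}$. On each $U_i$, the Hermitian metric $h$ and the pullback of the Euclidean metric on $\mc^n$ are mutually comparable with uniform constants (finitely many relatively compact charts), so both the volume form $dV$ and the pointwise norm $|\cdot|$ on $(0,1)$-forms differ from their Euclidean counterparts by bounded multiplicative factors.

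Next, for each $f\in C^1(M)$ and each $i$, the function $\chi_i f$ transported via $\varphi_i$ and extended by zero outside $V_i$ lies in $C_c^1(\mc^n)$. Applying the previous lemma to it, and then converting the Euclidean $L^p$-norms back into the intrinsic ones via the metric comparison, yields a constant $C:=C(M,h)>0$ (independent of $f$) with
$$\|\chi_i f\|_{L^{2n/(2n-1)}(M)}\leq C\|\bar\partial(\chi_i f)\|_{L^1(M)}.$$
By the Leibniz rule $\bar\partial(\chi_i f)=\chi_i\bar\partial f+(\bar\partial\chi_i)f$ and the pointwise bound $|\chi_i|\leq 1$, we obtain
$$\|\bar\partial(\chi_i f)\|_{L^1(M)}\leq \|\bar\partial f\|_{L^1(M)}+K\|f\|_{L^1(M)},$$
where $K:=\max_i\|\bar\partial\chi_i\|_{L^\infty(M)}$ depends only on $(M,h)$ and the chosen chart data.

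Finally, the triangle inequality in $L^{2n/(2n-1)}(M)$ applied to $f=\sum_i\chi_i f$ gives
$$\|f\|_{L^{2n/(2n-1)}(M)}\leq \sum_{i=1}^N\|\chi_i f\|_{L^{2n/(2n-1)}(M)}\leq NC\bigl(\|\bar\partial f\|_{L^1(M)}+K\|f\|_{L^1(M)}\bigr),$$
which is the desired inequality with $\delta:=(NC\max\{1,K\})^{-1}$. There is no substantial obstacle; the only point requiring mild care is the uniformity of the Euclidean-to-Hermitian norm comparison on each chart, which is automatic from the compactness of $M$ and the finiteness of the cover.
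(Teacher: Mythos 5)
Your proposal is correct and follows exactly the route the paper invokes: transport the Euclidean $L^1$--$\bar\partial$--Sobolev inequality from the preceding lemma to the compact manifold by a partition of unity subordinate to finitely many holomorphic charts, using compactness to get uniform comparability of the Hermitian and Euclidean norms on the supports of the cutoffs, and then the Leibniz rule plus the triangle inequality. This is precisely the ``use partition of unity, as in \cite[Theorem 2.6]{Hebey}'' argument the paper references.
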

    \begin{lem}\label{lem:average2}
    Let $(M,h)$ be a compact balanced manifold, then there is a constant $\delta:=\delta(M,h)>0$ such that 
    $$\delta\|f-f_{\ave}\|_{L^1(M)}\leq \|\bar\partial f\|_{L^1(M)},\ \forall f\in C^1(M).$$
    \end{lem}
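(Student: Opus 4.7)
The plan is to derive a closed-manifold analog of the Bochner--Martinelli representation in Theorem \ref{thm:bochner martinella} and then estimate it in $L^1$ via Fubini using a uniform bound on $\int_M|\nabla_y G(x,y)|\,dV(x)$ that follows directly from Lemma \ref{green function:compact}. This is philosophically the same strategy used to derive Theorem \ref{thm:graident boundary} from Theorem \ref{thm:gradient 1} combined with Lemma \ref{lem:basic lemma 2}, but carried out on a compact balanced manifold without boundary and with the $\bar\partial$ operator in place of $d$.

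Assuming first that $f\in C^2(M)$, Lemma \ref{green function:compact}(ii) gives
$$f(x)-f_{\ave}=\int_M G(x,y)\Delta f(y)\,dV(y).$$
The balanced hypothesis says $\Delta_d=2\Delta_{\bar\partial}$, and on scalar $0$-forms this reduces to $-\Delta=2\bar\partial^*\bar\partial$ (using $\Delta_d=-\Delta$ on $0$-forms since $d^*f=0$). Following the argument in Theorem \ref{thm:bochner martinella}, I would cut out a small geodesic ball $B(x,\epsilon)$ around the singularity of $G(x,\cdot)$, integrate by parts in $y$ against $\bar\partial f$, and send $\epsilon\to 0+$. The boundary term over $\partial B(x,\epsilon)$ is controlled by $\sup_{\partial B(x,\epsilon)}|G|\cdot|\partial B(x,\epsilon)|\cdot\|df\|_{L^\infty(M)}$, and the bounds $|G|\le C d^{2-2n}$ (respectively $|G|\le C(1+|\ln d|)$ when the complex dimension $n=1$) from Lemma \ref{green function:compact}(iii)-(iv) make this vanish. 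This yields the closed-manifold counterpart of Theorem \ref{thm:bochner martinella}:
$$f(x)-f_{\ave}=-2\int_M\langle\bar\partial_y G(x,y),\bar\partial f(y)\rangle\,dV(y).$$

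Applying Cauchy--Schwarz pointwise, together with the elementary inequality $|\bar\partial_y G|\le|\nabla_y G|$, one then has $|f(x)-f_{\ave}|\le 2\int_M|\nabla_y G(x,y)|\,|\bar\partial f(y)|\,dV(y)$; integrating in $x$ and swapping the order of integration reduces the desired estimate to the uniform bound
$$\sup_{y\in M}\int_M|\nabla_y G(x,y)|\,dV(x)<\infty.$$
By Lemma \ref{green function:compact}(iii)-(iv), $|\nabla_y G(x,y)|\le C\,d(x,y)^{1-2n}$ (with the harmless logarithmic improvement when $n=1$), and in geodesic normal coordinates around $y$ the singular factor is absorbed by the volume element $r^{2n-1}\,dr\,d\sigma$, giving a finite bound independent of $y$. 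Combining these steps yields the inequality for $f\in C^2(M)$ with $\delta^{-1}$ equal to the product of the universal constants that appear.

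Finally, to obtain the stated conclusion for all $f\in C^1(M)$, I would approximate uniformly in the $C^1$ topology by $C^\infty$ functions via convolution in a partition-of-unity atlas; both $\|f-f_{\ave}\|_{L^1(M)}$ and $\|\bar\partial f\|_{L^1(M)}$ depend continuously on $f$ in that topology, so the inequality passes to the limit. The main technical obstacle I anticipate is the justification of the singular integration by parts, in particular handling complex dimension $n=1$ where $G$ only grows logarithmically; the vanishing of the boundary contribution must be checked directly, but is routine once the estimates in Lemma \ref{green function:compact} are in hand.
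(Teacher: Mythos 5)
Your proof is correct and follows essentially the same route as the paper's: derive the closed-manifold Bochner--Martinelli type identity $f(x)-f_{\ave}=-2\int_M\langle\bar\partial_y G(x,y),\bar\partial f(y)\rangle\,dV(y)$ from Lemma \ref{green function:compact} and the balanced identity $-\Delta=2\Delta_{\bar\partial}$ via a singular integration by parts, then conclude by Fubini using the uniform integrability of $|\nabla_y G(x,y)|\lesssim d(x,y)^{1-2n}$. Your write-up is in fact somewhat more careful than the paper's compressed proof, since you correctly trace the gradient bound back to Lemma \ref{green function:compact}(iii)--(iv) and spell out the $C^1\to C^2$ approximation step.
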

    \begin{proof}
    We may assume $f\in C^2(M)$, and let $G$ be a Green function as in Lemma \ref{green function:compact} on the underlying 
    Riemannian manifold $(M,g)$. By Lemma \ref{lem:laplace 1}, there is a constant $C:=C(M,h)>0$ such that 
    $$\sup_{z\in M}\int_M|\bar\partial_wG(z,w)|dV(w)\leq C.$$
    By Lemma \ref{green function:compact}, using an integration by parts, we have 
    $$f(z)-f_{\ave}=-2\int_{M}\langle\bar\partial_w G(z,w),\bar\partial f(w)\rangle dV(w),\ \forall z\in M.$$
    By Fubini's Theorem, we get 
    $$\|f-f_{\ave}\|_{L^1(M)}\leq 2C\|\bar\partial f\|_{L^1(M)}.$$
    \end{proof}
    \indent Now we can prove Theorem \ref{thm:poincare average1}.
     \begin{thm}[= Theorem \ref{thm:poincare average1}]
    Let $(M,h)$ be a compact balanced manifold of complex dimension $n$, and let 
    $$1\leq q\leq \frac{2n}{2n-1}.$$
     Then there is a constant $\delta:=\delta(M,h,q)>0$ such that
    $$
    \delta\|f-f_{\operatorname{ave}}\|_{L^q(M)}\leq \|\bar\partial f\|_{L^1(M)},\ \forall f\in C^1(M).$$
    \end{thm}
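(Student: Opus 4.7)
The plan is to combine the two preceding results, namely Lemma \ref{lem:average2} (an $L^{1}$--$L^{1}$ Poincaré inequality for $\bar\partial$ on compact balanced manifolds) and Corollary \ref{cor:average1} (the endpoint $L^{1}$--$L^{\frac{2n}{2n-1}}$ Sobolev inequality on compact Hermitian manifolds), and then interpolate trivially via H\"older to cover the full range $1\le q\le\frac{2n}{2n-1}$. The main work has already been done: the subtle ingredient is the Bochner--Martinelli / Green kernel representation, which enters through Lemma \ref{lem:average2} and whose analogue on $\mathbb{C}^n$ drives Corollary \ref{cor:average1}.

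First I would apply Corollary \ref{cor:average1} to the mean-zero function $f-f_{\ave}\in C^1(M)$ (which is legal since $(M,h)$ is compact Hermitian). Since $\bar\partial(f-f_{\ave})=\bar\partial f$, this yields a constant $\delta_1=\delta_1(M,h)>0$ with
\[
\delta_1\|f-f_{\ave}\|_{L^{\frac{2n}{2n-1}}(M)}\le \|\bar\partial f\|_{L^1(M)}+\|f-f_{\ave}\|_{L^1(M)}.
\]
Next, since $(M,h)$ is in fact balanced, Lemma \ref{lem:average2} supplies a constant $\delta_2=\delta_2(M,h)>0$ such that
\[
\delta_2\|f-f_{\ave}\|_{L^1(M)}\le \|\bar\partial f\|_{L^1(M)}.
\]
Substituting the second bound into the first and absorbing constants gives the endpoint estimate
\[
\delta_3\|f-f_{\ave}\|_{L^{\frac{2n}{2n-1}}(M)}\le \|\bar\partial f\|_{L^1(M)}
\]
for some $\delta_3=\delta_3(M,h)>0$.

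Finally, for the subcritical range $1\le q<\frac{2n}{2n-1}$, I would use that $M$ has finite volume: by H\"older's inequality,
\[
\|f-f_{\ave}\|_{L^q(M)}\le |M|^{\frac{1}{q}-\frac{2n-1}{2n}}\,\|f-f_{\ave}\|_{L^{\frac{2n}{2n-1}}(M)},
\]
and combining this with the endpoint estimate gives the desired $\delta=\delta(M,h,q)>0$ with $\delta\|f-f_{\ave}\|_{L^q(M)}\le\|\bar\partial f\|_{L^1(M)}$.

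Since both of the inputs are already established in the excerpt, there is no genuine obstacle remaining at this stage; the only minor point to verify carefully is that one may indeed substitute $f-f_{\ave}$ into Corollary \ref{cor:average1} (it is $C^1$ on $M$ because $f_{\ave}$ is constant) and that $\bar\partial(f-f_{\ave})=\bar\partial f$, which is immediate. The real content of the theorem therefore lies upstream, in the Bochner--Martinelli--type argument that produced Lemma \ref{lem:average2} using the uniform Green function estimates of Theorem \ref{green function uniform} together with the balanced identity $-\Delta=2\Delta_{\bar\partial}$; the present statement is just the clean packaging of those results via a one-line H\"older interpolation.
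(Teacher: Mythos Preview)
Your proposal is correct and matches the paper's proof essentially line for line: the paper also reduces to the endpoint $q=\frac{2n}{2n-1}$ (implicitly via H\"older on a finite-volume manifold), applies Corollary \ref{cor:average1} to $f-f_{\ave}$, invokes Lemma \ref{lem:average2}, and combines the two to obtain the constant $\frac{\delta^2}{\delta+1}$.
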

    \begin{proof}
     We may assume $q=\frac{2n}{2n-1}$, and fix $f\in C^1(M)$.
     By Corollary \ref{cor:average1} and Lemma \ref{lem:average2}, there is a constant $\delta:=\delta(M,h)>0$ such that 
    $$\delta\|f-f_{\ave}\|_{L^{\frac{2n}{2n-1}}(M)}\leq \|\bar\partial f\|_{L^1(M)}+\|f-f_{\ave}\|_{L^1(M)},$$
    and 
    $$\delta\|f-f_{\ave}\|_{L^1(M)}\leq \|\bar\partial f\|_{L^1(M)}.$$
    Thus we have 
    $$\frac{\delta^2}{\delta+1}\|f-f_{\ave}\|_{L^{\frac{2n}{2n-1}}(M)}\leq \|\bar\partial f\|_{L^1(M)}.$$
    \end{proof}
    \section{The proof of Theorem \ref{thm:improved L2 estimate}}
    Assumptions and notations as in Theorem \ref{thm:improved L2 estimate}. Let $\dom(\bar\partial^E)$ and $\dom(\bar\partial^{E,*})$ denote the domain 
    of $\bar\partial^E$ and $\bar\partial^{E,*}$, respectively.  \\
    \begin{comment}
    \indent The following Lemma (see {\cite[Formula 2.1.9]{Hor65}} or {\cite[Lemma 2.31]{Ada}}) describes when an element  $\alpha\in C_{(0,1)}^1(\overline{\Omega},E)$ belongs to $\dom(\bar\partial^{E,*})$.
    \begin{lem}\label{dom}
    Let $\alpha\in C_{(n,1)}^1(\overline{\Omega},E)$ which has local coordinate expression $\alpha=\sum_{j=1}^n \alpha_jdz\wedge d\bar{z}_j$, 
    then $\alpha\in\dom(\bar\partial^{E,*})$ if and only if
   $$
   \sum_{j=1}^n \alpha_j(z)\frac{\partial\rho}{\partial z_j}(z)=0
   $$
    for any $z\in\partial\Omega$.
    \end{lem}
    \indent By the above Lemma, it is obvious that $\dom(\bar\partial^{E,*})$ does not depend on the metric $h.$ \\
    \end{comment}
    \indent By a partition of unity, the following lemma can be easily derived from \cite[Proposition 2.1.1]{Hor65}
    \begin{lem}\label{approximation}
     $C_{(0,1)}^1(\overline{\Omega},E)\cap \dom(\bar\partial^{E,*})$ is dense in $\dom(\bar\partial^E)\cap \dom(\bar\partial^{E,*})$ under the graph norm
     given by 
     $$\alpha\mapsto \|\alpha\|+\|\bar\partial^E\alpha\|+\|\bar\partial^{E,*}\alpha\|.$$
    \end{lem}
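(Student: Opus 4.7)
My plan is to reduce the bundle-valued statement to the scalar Euclidean density theorem \cite[Proposition 2.1.1]{Hor65} via a partition of unity and local holomorphic trivializations of $E$, then reassemble. First I would cover $\overline\Omega$ by finitely many open sets $U_0,U_1,\dots,U_N$, with $U_0\subset\subset \Omega$ and each $U_i$ ($i\geq 1$) a coordinate neighborhood of a boundary point admitting simultaneously local holomorphic coordinates $(z_1,\dots,z_n)$ flattening a portion of $\partial\Omega$ and a holomorphic trivializing frame $\{e^{(i)}_1,\dots,e^{(i)}_r\}$ of $E|_{U_i}$. Let $\{\chi_i\}$ be a smooth partition of unity subordinate to this cover. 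For $\alpha\in\dom(\bar\partial^E)\cap\dom(\bar\partial^{E,*})$, write $\alpha=\sum_i\chi_i\alpha$; multiplication by a smooth compactly supported $\chi_i$ preserves both domains, since $\bar\partial^E(\chi_i\alpha)=\bar\partial\chi_i\wedge\alpha+\chi_i\bar\partial^E\alpha$ and the corresponding identity for the adjoint is a bounded zeroth-order perturbation that does not interact with the boundary condition. Hence it suffices to approximate each $\chi_i\alpha$ in graph norm.

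The interior summand $\chi_0\alpha$ is handled by Friedrichs mollification: convolution in a coordinate chart with a standard family of smooth bumps produces a sequence in $C^\infty_c(\Omega,E)\subset\dom(\bar\partial^{E,*})$ converging in graph norm, and no boundary condition is involved since the support stays inside $\Omega$. For each boundary summand $\chi_i\alpha$, the holomorphic frame $\{e^{(i)}_\mu\}$ identifies it with an $r$-tuple of scalar $(0,1)$-forms $\alpha^{(\mu)}$ compactly supported in a relatively open subset $V_i\subset\mathbb{C}^n$ with smooth strictly pseudoconvex boundary. Because the frame is holomorphic, $\bar\partial^E$ acts componentwise as the scalar $\bar\partial$, while $\bar\partial^{E,*}$ differs from the componentwise scalar $\bar\partial^*$ by zeroth-order multiplication involving the smooth positive definite matrix $(h_{\mu\bar\nu})$ and its first derivatives; the condition characterizing $\dom(\bar\partial^{E,*})$ thus reduces componentwise to the scalar boundary condition $\sum_j\alpha^{(\mu)}_j\partial\rho/\partial z_j=0$. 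Applying \cite[Proposition 2.1.1]{Hor65} to each component yields $C^1(\overline{V_i})$-approximations that converge in the scalar graph norm and respect the boundary condition; reassembling in the frame and pushing back to $U_i$ produces elements of $C^1_{(0,1)}(\overline{U_i\cap\Omega},E)\cap\dom(\bar\partial^{E,*})$ approximating $\chi_i\alpha$ in the bundle graph norm, the two graph norms being uniformly equivalent on the compact set $\overline{U_i\cap\Omega}$ by smoothness and positivity of $h$.

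Summing the local approximations (extended by zero) and using the triangle inequality yields a sequence in $C^1_{(0,1)}(\overline\Omega,E)\cap\dom(\bar\partial^{E,*})$ converging to $\alpha$ in graph norm, which proves the lemma. The substantive obstacle throughout is preservation of the first-order boundary condition defining $\dom(\bar\partial^{E,*})$, since ordinary mollification does not respect it; this is precisely what Hörmander's proposition accomplishes by shifting the form in an appropriate transverse direction before smoothing. Once that scalar, Euclidean input is granted, the bundle-on-manifold version is routine partition-of-unity bookkeeping, which is why the lemma can be stated as an easy consequence.
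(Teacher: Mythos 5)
Your argument matches the paper's: the paper's entire proof is the remark that the lemma follows from \cite[Proposition 2.1.1]{Hor65} by a partition of unity, and your elaboration---local holomorphic trivializations, interior mollification, boundary charts handled by H\"ormander's scalar density theorem, recombination via cutoffs---fills in exactly the bookkeeping the paper elides, with the essential observations (the boundary condition for $\dom(\bar\partial^{E,*})$ is metric-independent, and the bundle adjoint differs from the componentwise scalar one by a zeroth-order term) stated correctly. One small inaccuracy: holomorphic coordinates cannot in general flatten a piece of $\partial\Omega$ (that would force the boundary to be Levi-flat there), but this claim is never actually used---\cite[Proposition 2.1.1]{Hor65} applies to any bounded domain with $C^1$ boundary with no flattening required, so the argument goes through once that phrase is dropped.
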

    The following global Bochner-Hormander-Kohn-Morrey formula is well known, see \cite[Theorem 1.4.21]{Ma} for example.
    \begin{lem}\label{lem:bochner}
    Let $(M,\omega)$ be a K\"ahler manifold, $\Omega\subset M$ be a relatively compact open subset with a $C^2$-boundary defining function $\rho$, and let $(E,h)$ be a Hermitian holomorphic vector bundle over $M$. Then for any $\alpha\in C_{(n,1)}^1(\overline \Omega,E)\cap \dom(\bar\partial^{E,*})$, we have 
    $$\|\bar\partial^E \alpha\|^2+\|\bar\partial^{E,*}\alpha\|^2=\|\partial^{E,*} \alpha\|^2+(i\Theta^{(E,h)}\wedge \Lambda \alpha,\alpha)+\int_{\partial M}L_\rho(\alpha,\alpha)dS,$$
    where $L_\rho$ is the Levi form of $\rho$.
    \end{lem}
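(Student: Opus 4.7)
The identity is the classical boundary version of the Bochner--Kodaira--Nakano identity (often called the Morrey--Kohn--H\"ormander formula). My plan is to derive it along the standard route.

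First, by Lemma \ref{approximation} the space $C^1_{(n,1)}(\overline\Omega,E)\cap\dom(\bar\partial^{E,*})$ is dense in $\dom(\bar\partial^E)\cap\dom(\bar\partial^{E,*})$ under the graph norm; a standard limit argument therefore reduces the proof to the case $\alpha\in C^2_{(n,1)}(\overline\Omega,E)\cap\dom(\bar\partial^{E,*})$, where all differential operators can be applied pointwise and Stokes' theorem is available.

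Next, on a K\"ahler manifold one has the bundle-valued K\"ahler identities $[\Lambda,\partial^E]=i\bar\partial^{E,*}$ and $[\Lambda,\bar\partial^E]=-i\partial^{E,*}$ (with $\bar\partial^{E,*},\partial^{E,*}$ interpreted as the formal adjoints). These hold pointwise on smooth sections and combine to give the Nakano identity $\Delta_{\bar\partial}^E-\Delta_{\partial}^E=[i\Theta^{(E,h)},\Lambda]$. Since $\alpha$ has bidegree $(n,1)$ on the complex $n$-dimensional manifold $M$, both $\partial^E\alpha$ (of bidegree $(n+1,1)$) and $i\Theta^{(E,h)}\wedge\alpha$ (of bidegree $(n+1,2)$) vanish for pure degree reasons, so the Nakano identity reduces pointwise to
$$\bar\partial^E\bar\partial^{E,*}\alpha+\bar\partial^{E,*}\bar\partial^E\alpha-\partial^E\partial^{E,*}\alpha=i\Theta^{(E,h)}\wedge\Lambda\alpha.$$
I would then take the $L^2$-inner product with $\alpha$ and integrate over $\Omega$ term by term, using the $\bar\partial$- and $\partial$-analogues of Lemma \ref{lem:integration by parts 1}. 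The bulk contributions produce $\|\bar\partial^{E,*}\alpha\|^2+\|\bar\partial^E\alpha\|^2-\|\partial^{E,*}\alpha\|^2$, while several boundary integrals remain. The hypothesis $\alpha\in\dom(\bar\partial^{E,*})$ annihilates the boundary term arising from $(\bar\partial^E\bar\partial^{E,*}\alpha,\alpha)$ at once (this is precisely the content of the domain condition for smooth forms). The remaining boundary integrals involve only first-order data of $\alpha$; by working in a local orthonormal frame near $\partial\Omega$ with one direction aligned with $\nabla\rho$ and performing one integration by parts along $\partial\Omega$, the second-order derivatives of $\rho$ that appear can be repackaged into the Levi form, yielding exactly $\int_{\partial\Omega}L_\rho(\alpha,\alpha)\,dS$.

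The main obstacle will be this final boundary computation: verifying that the surviving boundary integrals collapse into $\int_{\partial\Omega}L_\rho(\alpha,\alpha)\,dS$. This is the technically delicate step of the Morrey--Kohn identity and requires careful bookkeeping in local holomorphic frames adapted to $\rho$, together with the explicit characterization of $\dom(\bar\partial^{E,*})$ as the vanishing, on $\partial\Omega$, of the contraction of $\alpha$ with $\bar\partial\rho$. For this detailed calculation I would follow the presentation in Ma--Marinescu \cite{Ma} (cited in the statement), which handles precisely the $(n,q)$-form case treated here.
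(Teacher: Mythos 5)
The paper gives no proof of this lemma at all: it simply notes that the global Bochner--Kohn--Morrey formula is ``well known'' and refers the reader to \cite[Theorem 1.4.21]{Ma}. Your sketch is a correct outline of the standard derivation — density reduction via Lemma \ref{approximation}, the bundle-valued K\"ahler/Nakano identities with the degree vanishing for $(n,1)$-forms on an $n$-dimensional manifold, integration by parts producing $\|\bar\partial^{E,*}\alpha\|^2+\|\bar\partial^E\alpha\|^2-\|\partial^{E,*}\alpha\|^2$, and the Morrey--Kohn boundary bookkeeping collecting the surviving boundary integrals into $\int_{\partial\Omega}L_\rho(\alpha,\alpha)\,dS$ — and you explicitly defer the delicate boundary calculation to Ma--Marinescu, which is exactly the source the paper cites. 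In substance you and the paper are taking the same route.
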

     Now we prove Theorem \ref{thm:improved L2 estimate}.
     \begin{thm}[= Theorem \ref{thm:improved L2 estimate}]
     Let $(M,\omega)$ be a K\"ahler manifold,  $\Omega\subset\subset M$ be an open subset with a smooth strictly plurisubharmonic boundary defining function $\rho$, and let $(E,h)$ be a Hermitian holomorphic vector bundle over $M$ such that $A_{E}:=i\Theta^{(E,h)}\wedge \Lambda\geq 0$ in bidegree $(n,1)$. Then there is a constant $\delta:=\delta(M,\omega,\Omega,\rho,E,h)>0$
     such that for any nonzero $\bar\partial$-closed $f\in L^2_{(n,1)}(\Omega,E),$ satisfying 
     $$M_f:=\int_{\Omega}\langle A_E^{-1}f,f\rangle dV<\infty,$$
     there is $u\in L_{(n,0)}^2(\Omega,E)$ such that  $\bar\partial u=f$ and 
     $$\int_{\Omega}|u|^2dV\leq \frac{\|f\|}{\sqrt{\|f\|^2+\delta M_f}}\int_{\Omega}\langle A_E^{-1}f,f\rangle dV.$$
    \end{thm}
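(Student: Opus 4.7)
The plan is to follow the duality framework of \cite{DJQ}, now with the Bochner-Kodaira-Nakano identity (Lemma \ref{lem:bochner}) playing the role of the integral representation on manifolds, and a $\bar\partial$-Poincar\'e estimate (in the spirit of Theorem \ref{thm: dbar boundary}) providing an auxiliary coercive term beyond the usual H\"ormander bound.

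I first work with a smooth test form $\alpha\in C^1_{(n,1)}(\overline\Omega,E)\cap\dom(\bar\partial^{E,*})$ satisfying $\bar\partial^E\alpha=0$. For such $\alpha$, Lemma \ref{lem:bochner} reads
$$\|\bar\partial^{E,*}\alpha\|^2=\|\partial^{E,*}\alpha\|^2+(A_E\alpha,\alpha)+\int_{\partial\Omega}L_\rho(\alpha,\alpha)\,dS,$$
in which every term on the right is nonnegative: $A_E\geq 0$ by hypothesis, and the Levi form $L_\rho$ is strictly positive on the complex-tangential directions to which elements of $\dom(\bar\partial^{E,*})$ restrict, by the strict plurisubharmonicity of $\rho$. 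Thus $\int_{\partial\Omega}L_\rho(\alpha,\alpha)\,dS\geq c_0\|\alpha\|_{L^2(\partial\Omega)}^2$ for some $c_0=c_0(\rho)>0$. I would then invoke a bundle-valued version of Theorem \ref{thm: dbar boundary} with $p=q=r=2$ to bound $\|\alpha\|^2\leq C(\|\partial^{E,*}\alpha\|^2+\|\alpha\|_{L^2(\partial\Omega)}^2)$, observing that, via the Hodge star for $E$-valued forms, $\partial^{E,*}$ on $\Lambda^{n,1}T^*M\otimes E$ corresponds to $\bar\partial$ acting on a $(0,n-1)$-form with values in a Hodge-dual twist of $E^*$. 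Combining with the Levi-form lower bound, the Bochner identity then yields the coercivity estimate
$$\|\bar\partial^{E,*}\alpha\|^2\geq (A_E\alpha,\alpha)+\delta\|\alpha\|^2$$
for some $\delta=\delta(M,\omega,\Omega,\rho,E,h)>0$. Lemma \ref{approximation} then extends this inequality to every $\alpha\in\dom(\bar\partial^E)\cap\dom(\bar\partial^{E,*})$ with $\bar\partial^E\alpha=0$.

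Next I combine two Cauchy-Schwarz bounds on the pairing $(f,\alpha)$. Since $A_E\geq 0$ and $M_f<\infty$, Cauchy-Schwarz in the $A_E$-metric gives $|(f,\alpha)|^2\leq M_f\,(A_E\alpha,\alpha)$, while trivial Cauchy-Schwarz gives $|(f,\alpha)|^2\leq \|f\|^2\|\alpha\|^2$. Multiplying these by $\|f\|^2$ and $\delta M_f$ respectively and summing,
$$(\|f\|^2+\delta M_f)\,|(f,\alpha)|^2\leq \|f\|^2 M_f\big[(A_E\alpha,\alpha)+\delta\|\alpha\|^2\big]\leq \|f\|^2 M_f\,\|\bar\partial^{E,*}\alpha\|^2,$$
and since $\|f\|^2/(\|f\|^2+\delta M_f)\leq \|f\|/\sqrt{\|f\|^2+\delta M_f}$, this yields
$$|(f,\alpha)|^2\leq \frac{\|f\|\,M_f}{\sqrt{\|f\|^2+\delta M_f}}\,\|\bar\partial^{E,*}\alpha\|^2.$$
A standard Hahn-Banach (duality) argument, as used in H\"ormander's $L^2$ theory, then produces $u\in L^2_{(n,0)}(\Omega,E)$ solving $\bar\partial^E u=f$ weakly with $\|u\|^2\leq \|f\|M_f/\sqrt{\|f\|^2+\delta M_f}$, which is precisely the asserted estimate.

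The main obstacle will be the Poincar\'e absorption in the coercivity step: producing the $E$-valued inequality $\|\alpha\|^2\leq C(\|\partial^{E,*}\alpha\|^2+\|\alpha\|_{L^2(\partial\Omega)}^2)$ from the scalar $\bar\partial$-Sobolev inequality of Theorem \ref{thm: dbar boundary}. I expect this to require working in local holomorphic frames, using the Hodge star for $E$-valued forms to identify $\partial^{E,*}$ on $\Lambda^{n,1}T^*M\otimes E$ with a $\bar\partial$ on an $E^*$-twisted Hodge-dual bundle, and patching via a partition of unity with bounded transition and metric data absorbed into the final constant $\delta$. Checking that the $\dom(\bar\partial^{E,*})$ boundary condition translates correctly under this Hodge-star dualization, and that the resulting $\delta$ depends only on the geometric data and not on any degeneracy of $A_E$, are the two points that will need the most care.
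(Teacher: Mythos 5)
Your proposal follows essentially the same route as the paper: Bochner--Kodaira--Kohn--Morrey (Lemma \ref{lem:bochner}) combined with the $\bar\partial$-Poincar\'e coercivity $\delta\|\alpha\|^2\leq\|\partial^{E,*}\alpha\|^2+\int_{\partial\Omega}L_\rho(\alpha,\alpha)\,dS$ (the paper's appeal to Theorem \ref{thm: dbar boundary} plus strict pseudoconvexity and a partition of unity), then a two-fold Cauchy--Schwarz and Hahn--Banach duality. Your observation that the scalar Theorem \ref{thm: dbar boundary} must be transported to bundle-valued $(n,1)$-forms via Hodge-star dualization of $\partial^{E,*}$ is exactly the point the paper glosses over.

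One technical wrinkle: you restrict to $\bar\partial^E$-closed test forms from the outset and then claim Lemma \ref{approximation} ``extends this inequality to every $\alpha\in\dom(\bar\partial^E)\cap\dom(\bar\partial^{E,*})$ with $\bar\partial^E\alpha=0$.'' But Lemma \ref{approximation} gives density of \emph{smooth} forms in $\dom(\bar\partial^E)\cap\dom(\bar\partial^{E,*})$ under the graph norm --- it does not produce smooth $\bar\partial^E$-\emph{closed} approximants of a $\bar\partial^E$-closed $\alpha$. The clean fix is the one the paper implicitly takes: prove $\|\bar\partial^E\alpha\|^2+\|\bar\partial^{E,*}\alpha\|^2\geq(A_E\alpha,\alpha)+\delta\|\alpha\|^2$ for all smooth $\alpha\in\dom(\bar\partial^{E,*})$, extend by Lemma \ref{approximation} (noting $A_E$ is bounded on $\overline\Omega$), and only then discard $\|\bar\partial^E\alpha\|^2$ on the kernel inside the Hahn--Banach step. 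This is a reshuffling, not a new idea. Also note your final weakening $\|f\|^2/(\|f\|^2+\delta M_f)\leq\|f\|/\sqrt{\|f\|^2+\delta M_f}$ is unnecessary --- both your argument and the paper's actually establish the sharper bound $\|u\|^2\leq\|f\|^2M_f/(\|f\|^2+\delta M_f)$.
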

     \begin{proof}
    Set $M_f:=\int_{\Omega}\langle A_E^{-1}f,f\rangle dV$. Similar to the proof of \cite[Theorem 4.5, Chapter VIII]{Dem}, by the Hahn-Banach extension theorem and Riesz representation theorem, it suffices to prove there is a $\delta:=\delta(M,\omega,\Omega,\rho,E,h)>0$
    such that for any $(n,1)$-form $\alpha\in \dom(\bar\partial^{E})\cap\dom(\bar\partial^{E,*})$, we have
    $$
    |(\alpha,f)|^2+\delta\frac{|(\alpha,f)|^2}{\|f\|_2^2} M_f\leq \left(\|\bar\partial^E\alpha\|^2+\|\bar\partial^{E,*}\alpha\|^2\right)M_f.
    $$
    By Lemma \ref{approximation}, we may assume $\alpha\in C^1_{(0,1)}(\overline{\Omega})$. By Theorem \ref{thm: dbar boundary} and since $\Omega$ is strictly pseudoconvex,
    using a partition of unity, there is a constant $\delta:=\delta(M,\omega,\Omega,\rho,E,h)>0$  such that
        $$
        \delta\|\alpha\|^2\leq \|\partial^{E,*}\alpha\|^2+\int_{\partial \Omega}L_\rho(\alpha,\alpha)dS.
        $$
     Using Cauchy-Schwarz Inequality and Lemma \ref{lem:bochner}, we get
    \begin{align*}
    &\quad |(\alpha,f)|^2+\delta\frac{|(\alpha,f)|^2}{\|f\|_2^2} M_f\\
    &\leq (A_E\alpha,\alpha)\cdot M_f+\delta\|\alpha\|^2\cdot M_f\\
    &\leq \left[(A_E\alpha,\alpha)+\|\partial^{E,*}\alpha\|^2+\int_{\partial M}L_\rho(\alpha,\alpha)dS\right]\cdot M_f\\
    &\leq \left(\|\bar\partial^E\alpha\|^2+\|\bar\partial^{E,*}\alpha\|^2\right)M_f.
    \end{align*}
    \end{proof}
    \bibliographystyle{amsplain}
        
\end{document}